\newtheorem{propo}{Proposition}[section]
\newtheorem{proposition}[propo]{Proposition}
\newtheorem{lemma}[propo]{Lemma}
\newtheorem{theorem}[propo]{Theorem}
\newtheorem{prob}[propo]{Problem}
\newcommand{\Ker}{\operatorname{Ker}}
\newcommand{\Irr}{{\mathrm {Irr}}}
\newcommand{\IBR}{{\mathrm {IBr}}}
\renewcommand{\Im}{{\mathrm {Im}}}
\newcommand{\Hom}{{\mathrm {Hom}}}
\newcommand{\Char}{{\mathrm {char}}}
\newcommand{\CC}{{\mathbb C}}
\newcommand{\FF}{{\mathbb F}}
\begin{document}
\title[permutation modules for $O_{m}^{\pm}(3)$ acting on nonsingular points]
{On the permutation modules for orthogonal\\ groups $O_{m}^{\pm}(3)$
acting on nonsingular points\\ of their standard modules}

\author{Jonathan I. Hall}
\address{Department of Mathematics, Michigan State University, East Lansing,
MI 48824} \email{jhall@math.msu.edu }
\author{Hung Ngoc Nguyen}
\address{Department of Theoretical and Applied Mathematics, University of Akron, Akron,
OH 44253} \email{hn10@uakron.edu} \subjclass[2000]{Primary 20C33}
\keywords{Permutation Modules, Orthogonal Groups, Nonsingular
Points}
\date{\today}

\begin{abstract}
We describe the structure, including composition factors and
submodule lattices, of cross-characteristic permutation modules for
the natural actions of the orthogonal groups $O_{m}^{\pm}(3)$ with
$m\geq6$ on nonsingular points of their standard modules. These
actions together with those studied in~\cite{HN} are all examples of
primitive rank $3$ actions of finite classical groups on nonsingular
points.
\end{abstract}

\maketitle

%%% ------------------------------------------------------------------------------------------------------------------------

\section{Introduction}
Given a group $G$ acting on a set $\Omega$ and a field $\FF$, the
problem of determining the structure of the permutation $\FF
G$-module $\FF \Omega$ has been studied extensively for many years.
In particular, permutation modules as well as permutation
representations for finite classical groups have received
significant attention. We are interested in the action of a finite
classical group $G$ on points (i.e. $1$-dimensional subspaces) of
the standard module associated with $G$.

The permutation module for the natural action of $G$ on singular
points has been studied in great depth (for instance,
see~\cite{LST,L1,L2,ST}). However, not much has been known about the
action of $G$ on nonsingular points. We note that in the linear and
symplectic groups, all points are singular.

\begin{prob}\label{low-dim} Let $G$ be a finite orthogonal or unitary group
and $\FF$ an algebraically closed field of cross characteristic. Describe
the submodule structure of the permutation $\FF G$-module for $G$
acting naturally on the set of nonsingular points of its standard
module.
\end{prob}

Let $P^0$ and $P$ be the sets of singular and nonsingular points,
respectively, of the standard module associated with $G$. It is well
known that the action of $G$ on $P^0$ is always transitive and rank
3. On the other hand, the transitivity of the action of $G$ on $P$
and the ranks of that action on orbits depend closely on the
underlying field, which we will denote by~$\FF_q$, of~$G$. We do not
have an exact formula for the ranks but they are ``more or less'' an
increasing function of $q$. In particular, the structure of $\FF P$
becomes more complicated when $q$ is large.

In~\cite{HN}, the authors studied the problem for orthogonal groups
over a field of two elements and unitary groups over a field of four
elements. This is the case (and only case!) when the action of $G$
on $P$ is transitive and rank $3$.

In this article, we study the cross-characteristic permutation
modules $\FF P$ for the orthogonal groups $O_m^\pm(3)$ acting on
$P$. It is not difficult to see that $O_m^\pm(3)$ has two orbits on
$P$ and the action on each orbit is rank 3. These actions together
with those studied in~\cite{HN} are all examples of primitive rank
$3$ actions of finite classical groups on nonsingular points, as
pointed out in an important paper by Kantor and Liebler
(see~\cite{KL}).

Drawing upon the methods introduced in~\cite{HN}, as well as
in~\cite{L1} and~\cite{ST}, we prove the following:

\begin{theorem} \label{theorem}Let $\FF$ be an algebraically closed field
of characteristic $\ell\neq3$. Let $G$ be $O^{\pm}_{m}(3)$ ($m=2n$
or $2n+1$) with $m\geq6$ and $P$ be the set of nonsingular points of
the standard module associated with $G$. Then the permutation $\FF
G$-module $\FF P$ of $G$ acting naturally on $P$ has the submodule
structure as described in Tables~1,2, and~3. In these tables,
$\delta_{i,j}=1$ if $i\mid j$ and $0$ otherwise.
\end{theorem}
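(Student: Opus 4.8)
The plan is to reduce the determination of the full submodule lattice to two inputs: the Brauer-character (composition-factor) data, and a small set of module homomorphisms whose images pin down the submodule structure uniquely. First I would fix one $G$-orbit $\Omega$ on $P$ (the argument for the other orbit, and for the two signs and two parities of $m$, is parallel up to bookkeeping) and recall that $G$ acts on $\Omega$ as a rank-$3$ permutation group. The associated rank-$3$ graph on $\Omega$ gives a collinearity/adjacency operator $A\in\End_{\FF G}(\FF\Omega)$; since the endomorphism algebra of a rank-$3$ action is commutative of dimension $3$, $\FF\Omega$ has at most three distinct ``eigenspace'' summands over a field where the relevant eigenvalue-differences are invertible, and exactly this semisimple picture holds in the ``generic'' characteristics $\ell$. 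The non-generic primes are those dividing the intersection numbers (equivalently, the subdegrees $k_1,k_2$ and the eigenvalues of $A$); for those finitely many $\ell$ one sees extra composition factors glued together, and these are precisely the cases producing the nontrivial entries $\delta_{i,j}$ in the tables.

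Next I would compute the ordinary character $\CC\Omega=1_G+\chi_1+\chi_2$ explicitly, identify $\chi_1,\chi_2$ among the known irreducible characters of $O^\pm_m(3)$, and then reduce mod $\ell$. The key external inputs here are the known cross-characteristic character tables / decomposition data for the low-rank Weil-type and unipotent representations of $O^\pm_m(3)$ — in particular the $\ell$-modular decomposition of the constituents $\chi_i$ — together with dimension formulas for the relevant simple $\FF G$-modules. This yields the multiset of composition factors of $\FF\Omega$ as a function of $\ell$, $m$, and the sign. To go from composition factors to the submodule lattice I would use the self-duality of the permutation module ($\FF\Omega\cong(\FF\Omega)^*$ since the action is on a set), so $\soc(\FF\Omega)\cong\FF\Omega/\rad(\FF\Omega)$, and I would exhibit explicit submodules: the all-ones submodule $\FF\cdot\mathbf{1}$ and, dually, the ``sum-zero'' submodule, plus the images of the natural maps between the permutation modules on $\Omega$, on the other orbit, on singular points, and on the trivial module — e.g. the orbit-counting / incidence maps coming from pairs of points. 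Each such map has image computable from the eigenvalue data, and together they produce enough distinguished submodules to force the lattice. Uniqueness is then finished by a multiplicity argument: whenever a composition factor occurs with multiplicity one, its position in any submodule is determined, and the few factors of multiplicity two are controlled by the self-duality together with the explicit uniserial chains coming from $A$ acting nilpotently on a generalized eigenspace.

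Concretely the steps are: (1) set up the rank-$3$ combinatorics and record the subdegrees, intersection numbers, and eigenvalues of the collinearity operator for each of the orbits, signs, and parities; (2) identify the ordinary constituents $\chi_1,\chi_2$ and tabulate their $\ell$-modular reductions, hence the composition factors of $\FF\Omega$; (3) locate the ``special'' primes $\ell$ (divisors of the intersection numbers) and for those analyze $A$ as a non-semisimple operator, getting candidate uniserial structures; (4) construct explicit homomorphisms into and out of $\FF\Omega$ (all-ones, incidence with singular points, incidence between the two nonsingular orbits) and compute their kernels and images via the eigenvalue data; (5) invoke self-duality to pair up $\soc$ and head, and assemble the submodule lattice, checking it against Tables 1–3. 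I expect the main obstacle to be step (3) together with the multiplicity-two part of step (5): at the bad primes one must show the extension between two isomorphic-or-distinct factors is actually nonsplit in the precise pattern claimed, which cannot be read off from composition factors alone — this is where the explicit maps of step (4) and a careful rank computation of $A-\lambda$ on the permutation module (or equivalently a dimension count of fixed spaces of well-chosen subgroups) are essential, and where following the methods of \cite{HN}, \cite{L1}, and \cite{ST} will do the real work.
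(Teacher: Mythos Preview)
Your outline is essentially the paper's strategy: rank-$3$ combinatorics give the graph submodules (your eigenspaces of $A$), Proposition~\ref{mainO+} supplies the minimality that makes them the socle, the incidence maps $Q_{i,j}$ of \S\ref{relations} are your explicit homomorphisms, and self-duality pairs socle with head. The easy cases ($\ell\neq2$) fall out exactly as you describe.

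One point deserves sharpening. At $\ell=2$ for $G=O_{2n}^-(3)$, after you know the composition factors and the socle, you still have to exclude the possibility that $\FF P^{+1}$ contains a submodule with shape $X-Y$ (equivalently, that the two distinct nontrivial factors glue in the ``wrong'' order). Your proposed tools---rank of $A-\lambda$, the maps $Q_{i,j}$, and self-duality---do not by themselves rule this out: the incidence map from $\FF P^0$ tells you $X-W$ sits inside, but not whether $X-Y$ also does. The paper handles this (Lemma~\ref{lemmaO-2}) by an induction on $n$: restrict to the parabolic stabilizing an isotropic point, pass to centralizers of its unipotent radical to get $C_{X_n}(O)\cong X_{n-1}$ and $C_{Y_n}(O)\cong Y_{n-1}$, and push the question down to $O_4^-(3)$, where it is settled by identifying $\FF P_2^\kappa$ with the Specht filtration of $M^{(4,2)}$ for $\mathrm{Sym}(6)$ (Lemma~\ref{lemmaO-1}). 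This parabolic-restriction/induction step is the one genuinely new ingredient beyond your steps (1)--(5), and you should expect to need something like it rather than a pure eigenvalue-rank argument.
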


\footnotesize
\begin{table}[h]\label{tableO+}\caption{Submodule structure of $\FF O^+_{2n}(3)$-module $\FF P$.}
\begin{tabular}{ll} \hline
Conditions on $\ell$ and $n$\quad\quad\quad & Structure of $\FF
P$\\\hline
\xymatrix@C=8pt@R=8pt{\ell\neq2,3;\ell\nmid (3^n-1)}& \xymatrix@C=8pt@R=8pt{2\FF\oplus X\oplus Y\oplus 2Z }\\

\xymatrix@C=8pt@R=8pt{\ell\neq2,3;\ell\mid
(3^n-1)}&\xymatrix@C=8pt@R=8pt{
&&&&\FF\ar@{-}[d]&&\FF\ar@{-}[d]\\
X&\oplus&Y&\oplus&Z\ar@{-}[d]&\oplus&Z\ar@{-}[d]\\
&&&&\FF&&\FF}\\

\xymatrix@C=8pt@R=8pt{\ell=2;n \text{ even }}&\xymatrix@C=8pt@R=8pt{
\FF\ar@{-}[d]&&X\ar@{-}[dll]\ar@{-}[d]&&\FF\ar@{-}[d]&&Y\ar@{-}[dll]\ar@{-}[d]\\
W\ar@{-}[d]\ar@{-}[drr]&&Y\ar@{-}[d]&\oplus&W\ar@{-}[d]\ar@{-}[drr]&&X\ar@{-}[d]\\
\FF&&X&&\FF&&Y}\\

\xymatrix@C=8pt@R=8pt{\ell=2;n \text{ odd}}& \xymatrix@C=8pt@R=8pt{
&&&X\ar@{-}[dl]\ar@{-}[dr]&&&&Y\ar@{-}[dl]\ar@{-}[dr]&\\
2\FF&\oplus&Y\ar@{-}[dr]&&W\ar@{-}[dl]&\oplus&X\ar@{-}[dr]&&W\ar@{-}[dl]\\
&&&X&&&&Y&} \\\hline
\end{tabular}

\footnotesize where, $\dim X=\dim Y=\frac{(3^n-1)(3^{n-1}-1)}{8}$,
$\dim Z=\frac{3^{2n}-9}{8}-\delta_{\ell,3^n-1}$,\\ and $\dim
W=\frac{(3^n-1)(3^{n-1}+3)}{8}-1-\delta_{2,n}$.\normalsize
\end{table}

\begin{table}[h]\caption{Submodule structure of $\FF O^-_{2n}(3)$-module $\FF P$.}\label{tableO-}
\begin{tabular}{ll} \hline
Conditions on $\ell$ and $n$\quad\quad\quad & Structure of $\FF
P$\\\hline
\xymatrix@C=8pt@R=8pt{\ell\neq2,3;\ell\nmid (3^n+1)}& \xymatrix@C=8pt@R=8pt{2\FF\oplus X\oplus Y \oplus 2Z}\\

\xymatrix@C=8pt@R=8pt{\ell\neq2,3;\ell\mid
(3^n+1)}&\xymatrix@C=8pt@R=8pt{
&&&&\FF\ar@{-}[d]&&\FF\ar@{-}[d]\\
X&\oplus&Y&\oplus&Z\ar@{-}[d]&\oplus&Z\ar@{-}[d]\\
&&&&\FF&&\FF}\\

\xymatrix@C=8pt@R=8pt{\ell=2;n \text{ even}}
&\xymatrix@C=3pt@R=3pt{&&&X\ar@{-}[ddl]\ar@{-}[dr]&&&&Y\ar@{-}[ddl]\ar@{-}[dr]&\\
&&&&\FF\ar@{-}[d]&&&&\FF\ar@{-}[d]\\
2\FF&\oplus&W\ar@{-}[ddr]&& Y\ar@{-}[d]&\oplus& W\ar@{-}[ddr]&&X\ar@{-}[d]\\
&&&&\FF\ar@{-}[dl]&&&&\FF\ar@{-}[dl]\\
&&&X&&&&Y&}\\

\xymatrix@C=8pt@R=8pt{\ell=2;n \text{ odd
}}&\xymatrix@C=3pt@R=3pt{\FF\ar@{-}[dd]&&X\ar@{-}[ddll]\ar@{-}[d]&&\FF\ar@{-}[dd]&&Y\ar@{-}[ddll]\ar@{-}[d]\\
&&\FF\ar@{-}[d]&&&&\FF\ar@{-}[d]\\
Y\ar@{-}[dd]\ar@{-}[ddrr]&&W\ar@{-}[d]&\oplus&X\ar@{-}[dd]\ar@{-}[ddrr]&&W\ar@{-}[d]\\
&&\FF\ar@{-}[d]&&&&\FF\ar@{-}[d]\\
\FF&&X&&\FF&&Y }\\\hline
\end{tabular}

\footnotesize where, $\dim X=\dim
Y=\frac{(3^n+1)(3^{n-1}+1)}{8}-\delta_{\ell,2}$, $\dim
Z=\frac{3^{2n}-9}{8}-\delta_{\ell,3^n+1}$, \\and $\dim
W=\frac{(3^n+1)(3^{n-1}-3)}{8}-1+\delta_{2,n}$.\normalsize
\end{table}

\begin{table}[h]\caption{Submodule structure of $\FF O_{2n+1}(3)$-module $\FF P$.}\label{tableOdd}
\begin{tabular}{ll} \hline
Conditions on $\ell$ and $n$\quad\quad\quad\quad\quad & Structure of
$\FF P$\\\hline
\xymatrix@C=8pt@R=8pt{\ell\neq2,3;\ell\nmid (3^n-1),\ell\nmid (3^n+1)}& \xymatrix@C=8pt@R=8pt{2\FF\oplus X\oplus Y\oplus 2Z }\\

\xymatrix@C=8pt@R=8pt{\ell\neq2,3;\ell\mid
(3^n-1)}&\xymatrix@C=8pt@R=8pt{
&&\FF\ar@{-}[d]&&&&\\
\FF&\oplus &X\ar@{-}[d]&\oplus&Y&\oplus&2Z\\
&&\FF&&&&}\\

\xymatrix@C=8pt@R=8pt{\ell\neq2,3;\ell\mid
(3^n+1)}&\xymatrix@C=8pt@R=8pt{
&&&&\FF\ar@{-}[d]&&\\
\FF&\oplus &X&\oplus&Y\ar@{-}[d]&\oplus&2Z\\
&&&&\FF&&}\\

\xymatrix@C=8pt@R=8pt{\ell=2;n \text{ even
}}&\xymatrix@C=3pt@R=3pt{&&\FF\ar@{-}[dd]&&X_1\ar@{-}[dd]\ar@{-}[ddll]&&&Y_1\ar@{-}[ddl]\ar@{-}[dr]&\\
&&&&&&&&\FF\ar@{-}[d]\\
\FF&\oplus&Z_1\ar@{-}[dd]\ar@{-}[ddrr]&&Y_1\ar@{-}[dd]&\oplus&X_1\ar@{-}[ddr]&&Z_1\ar@{-}[d]\\
&&&&&&&&\FF\ar@{-}[dl]\\
&&\FF&&X_1&&&Y_1&}\\

\xymatrix@C=8pt@R=8pt{\ell=2;n \text{
odd}}&\xymatrix@C=8pt@R=8pt{&&&X_1\ar@{-}[dl]\ar@{-}[dr]&&&\FF\ar@{-}[ddr]&Y_1\ar@{-}[d]\ar@{-}[dr]\ar@{-}[ddl]&\\
\FF&\oplus&Z_1\ar@{-}[dr]&&Y_1\ar@{-}[dl]&\oplus&&Z_1\ar@{-}[d]&X_1\ar@{-}[dl]\\
&&&X_1&&&\FF&Y_1& }\\\hline
\end{tabular}

\footnotesize where, $\dim
X=\frac{(3^n+1)(3^{n}-3)}{4}-\delta_{\ell,3^n-1}$, $\dim
Y=\frac{(3^n-1)(3^{n}+3)}{4}-\delta_{\ell,3^n+1}$, $\dim
Z=\frac{3^{2n}-1}{4}$, $\dim X_1=\frac{(3^n-1)(3^n-3)}{8}$, $\dim
Y_1=\frac{(3^n+1)(3^n+3)}{8}-1$, and $\dim
Z_1=\frac{(3^{2n}-9)}{8}-\delta_{2,n}$.\normalsize
\end{table}

\normalsize

Let $V$ be a vector space of dimension $m\geq 6$ over the field of
$3$ elements $\mathbb{F}_3=\{0,1,-1\}$. Let $Q$ be a non-degenerate
quadratic form on $V$, and let $(\cdot,\cdot)$ be the non-degenerate
symmetric bilinear form on $V$ associated with $Q$ so that
$Q(au+bv)=a^2Q(u)+b^2Q(v)+ab(u,v)$ for any $a,b\in \FF_3, u,v\in V$.
Then $G=O_m^\pm(3)$ is the full orthogonal group consisting of all
linear transformations of $V$ preserving $Q$.

For $\kappa=\pm1$, we denote
$$P^\kappa:=P^\kappa(Q):=\{\langle v\rangle\in P\mid Q(v)=\kappa\}.$$ $P^{+1}$ and $P^{-1}$ are often called the sets
of plus points and minus points, respectively. We obtain the
following isomorphism of $\FF G$-modules:
$$\FF P\cong\FF P^{+1}\oplus\FF P^{-1}.$$

Since $Q$ is a non-degenerate quadratic form on $V$, $-Q$ is also a
non-degenerate quadratic form on $V$. The two isometry groups $O(V,
Q)$ and $O(V,-Q)$ are canonically isomorphic, but the corresponding
sets of nonsingular points are switched: $P^{-1}(Q)=P^{+1}(-Q)$ and
$P^{+1}(Q)=P^{-1}(-Q)$. When $m$ is even the forms $Q$ and $-Q$ have
the same discriminant but when $m$ is odd they do not. Therefore in
Theorem~\ref{theorem} and its associated tables we see that for $m$
even there are two distinct isometry groups $G$, but for each the
modules $\FF P^{+1}$ and $\FF P^{-1}$ are the same up to a diagonal
automorphism of $G$, whereas for $m$ odd there is only one isometry
group to consider, but the modules $\FF P^{+1}$ and $\FF P^{-1}$ are
fundamentally different and indeed have different dimensions.

The paper is organized as follows. In the next section, we will
outline the proof of the main theorem. $\S$\ref{minimality} and
$\S$\ref{relations} are some preparations for the following
sections. Each family of groups $O_{2n}^+(3)$, $O_{2n}^-(3)$, and
$O_{2n+1}(3)$ is treated separately in sections~$\S$\ref{sectionO+},
$\S$\ref{sectionO-}, and~$\S$\ref{sectionOdd}, respectively.
%%% --------------------------------------------------------------------------------------------------------------------------------
\section{Notation and outline of the proof}

\subsection{Preliminaries}

If the action of $G$ on a set $\Omega$ is rank $3$ then the $\FF
G$-module $\FF\Omega$ has two special submodules, the so-called
\emph{graph submodules}. The following description of these graph
submodules is due to Liebeck (see~\cite{L1}).

Let $G_\alpha$ be the stabilizer of $\alpha\in \Omega$. Then
$G_\alpha$ acts on $\Omega$ with $3$ orbits: one of them is
$\{\alpha\}$ and the other two are denoted by $\Delta(\alpha)$ and
$\Phi(\alpha)$. Define the parameters: $a=|\Delta(\alpha)|$,
$b=|\Phi(\alpha)|$, $r=|\Delta(\alpha)\cap\Delta(\beta)|$, and
$s=|\Delta(\alpha)\cap\Delta(\gamma)|$ for $\beta\in \Delta(\alpha)$
and $\gamma\in \Phi(\alpha)$. For any subset $\Delta$ of $\Omega$,
we denote by $[\Delta]$ the element $\Sigma_{\delta\in\Delta}
\delta$ of $\FF \Omega$. For $c\in \FF$, let $U_c$ be the $\FF
G$-submodule of $\FF \Omega$ generated by all elements
$v_{c,\alpha}=c\alpha+[\Delta(\alpha)], \alpha\in \Omega$ and $U'_c$
be the $\FF G$-submodule of $U_c$ generated by all elements
$v_{c,\alpha}-v_{c,\beta}=c(\alpha-\beta)+[\Delta(\alpha)]-[\Delta(\beta)],
\alpha, \beta\in\Omega$. The \emph{graph submodules} of the
permutation $\FF G$-module $\FF \Omega$ are defined to be $U'_{c_1}$
and $U'_{c_2}$ where $c_1$ and $c_2$ are the roots of the quadratic
equation:
\begin{equation}\label{quadratic} x^2+(r-s)x+s-a=0.
\end{equation}

Setting $S(\FF \Omega)=\{\sum_{\omega\in\Omega}a_\omega\omega\mid
a_\omega\in\FF, \sum a_\omega=0\}$ and $T(\FF
\Omega)=\{c[\Omega]\mid c\in\FF\}$. $S(\FF\Omega)$ and
$T(\FF\Omega)$ are $\FF G$-submodules of $\FF \Omega$ of dimensions
$|\Omega|-1,1$, respectively. Moreover, $T(\FF \Omega)$ is
isomorphic to the one-dimensional trivial module.

Suppose that two graph submodules are different, i.e., $c_1\neq
c_2$. Since $v_{c_1,\alpha}-v_{c_2,\alpha}=(c_1-c_2)\alpha$ for any
$\alpha\in \Omega$, we have \begin{equation}\label{direct
sum}U'_{c_1}\oplus U'_{c_2}=S(\FF \Omega).\end{equation} The modules
$U'_{c_1}$ and $U'_{c_2}$ are the eigenspaces of the linear
transformation $T:\FF \Omega \rightarrow \FF \Omega$ defined by
$T(\alpha)=[\Delta(\alpha)], \alpha\in \Omega$ corresponding to
eigenvalues $-c_2$ and $-c_1$, respectively (see~\cite{HN}). As $T$
has trace $0$ with $T(\FF \Omega)$ an eigenspace for the eigenvalue
$a$, we can compute the dimensions of graph submodules from the
following equations:
\begin{equation}\label{dimension} \left\{\begin {array}{l}
\dim U'_{c_1}+\dim U'_{c_2}=|\Omega|-1,\\
c_2\dim U'_{c_1}+c_1\dim U'_{c_2}=a.
 \end {array} \right.
\end{equation}

We note that $\FF\Omega$ has a nonsingular and $G$-invariant inner
product defined by $\big\langle
\sum_{\omega\in\Omega}a_{\omega}\omega,\sum_{\omega\in\Omega}b_{\omega}
\omega\big\rangle=\sum_{\omega\in\Omega}a_{\omega}b_\omega.$ If $U$
is a submodule of $\FF \Omega$, we denote by $U^\perp$ the submodule
of $\FF \Omega$ consisting of all elements orthogonal to $U$. We
need the following result, which is due to Liebeck and is stated as
Lemma~2.1 in~\cite{HN}.

\begin{lemma}[\cite{L1}] \label{liebeck} If $c$ is not a root of
equation (\ref{quadratic}) then $U'_c=S(\FF\Omega)$. Moreover, if
$c_1$ and $c_2$ are roots of this equation then $\langle
v_{c_1,\alpha},v_{c_2,\beta}\rangle =s$ for any $\alpha, \beta\in
\Omega$. Consequently, $\langle U'_{c_1}, U_{c_2}\rangle=\langle
U'_{c_2}, U_{c_1}\rangle=0$.
\end{lemma}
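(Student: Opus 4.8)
The statement to be proven is Lemma~\ref{liebeck}, Liebeck's description of the graph submodules. The plan is to argue entirely inside the permutation module $\FF\Omega$ using the combinatorics of the rank $3$ action and the invariant inner product, rather than any representation-theoretic machinery. The key object is the linear map $T\colon\FF\Omega\to\FF\Omega$, $T(\alpha)=[\Delta(\alpha)]$, together with the observation that $U_c$ is generated by the vectors $v_{c,\alpha}=c\alpha+T(\alpha)=(c+T)(\alpha)$, i.e. $U_c=\Im(c\cdot\id+T)$, and $U'_c$ is the image of $S(\FF\Omega)$ under $c\cdot\id+T$. So all three assertions are really statements about the eigenstructure of $T$.

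First I would record the two basic identities of a rank $3$ graph that the paper has already set up: for $\alpha\in\Omega$ the set $\Delta(\alpha)$ has size $a$, and counting pairs through the relations $r$ and $s$ one gets the quadratic identity
\[
T^2(\alpha)=a\,\alpha+r\,[\Delta(\alpha)]+s\,[\Phi(\alpha)].
\]
Since $[\Phi(\alpha)]=[\Omega]-\alpha-[\Delta(\alpha)]$ and $T([\Omega])=a[\Omega]$, substituting gives, on the submodule $S(\FF\Omega)$ (where the $[\Omega]$ term contributes a multiple of $[\Omega]$ that vanishes after projecting, or one works modulo $T(\FF\Omega)$),
\[
T^2+(r-s)T+(s-a)\id \equiv 0 \quad\text{on } S(\FF\Omega).
\]
Thus on $S(\FF\Omega)$ the minimal polynomial of $T$ divides $x^2+(r-s)x+(s-a)$, whose roots are $c_1,c_2$ (note the sign: $v_{c,\alpha}=(c+T)(\alpha)$, and $-c_1,-c_2$ are the eigenvalues, consistent with the excerpt's later remarks). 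Now for the first claim: if $c$ is not a root of \eqref{quadratic}, then $-c$ is not an eigenvalue of $T$ on $S(\FF\Omega)$, so $c\cdot\id+T$ is invertible on $S(\FF\Omega)$; hence $U'_c$, the image of $S(\FF\Omega)$ under $c\cdot\id+T$, equals $S(\FF\Omega)$.

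Second, the inner product computation $\langle v_{c_1,\alpha},v_{c_2,\beta}\rangle=s$. Here I would expand
\[
\langle v_{c_1,\alpha},v_{c_2,\beta}\rangle
=\langle c_1\alpha+[\Delta(\alpha)],\,c_2\beta+[\Delta(\beta)]\rangle
=c_1c_2\langle\alpha,\beta\rangle+c_2\langle\alpha,[\Delta(\beta)]\rangle
+c_1\langle[\Delta(\alpha)],\beta\rangle+\langle[\Delta(\alpha)],[\Delta(\beta)]\rangle,
\]
and use: $\langle\alpha,\beta\rangle=\delta_{\alpha,\beta}$; $\langle\alpha,[\Delta(\beta)]\rangle$ is $1$ or $0$ according as $\alpha\in\Delta(\beta)$; and $\langle[\Delta(\alpha)],[\Delta(\beta)]\rangle=|\Delta(\alpha)\cap\Delta(\beta)|$, which is $a$, $r$, or $s$ according as $\beta=\alpha$, $\beta\in\Delta(\alpha)$, or $\beta\in\Phi(\alpha)$ (using the symmetry of the relations). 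Then one checks the three cases separately, using $c_1c_2=s-a$ and $c_1+c_2=-(r-s)=s-r$ from Vieta. In the case $\beta=\alpha$ the sum is $c_1c_2+0+0+a=(s-a)+a=s$; in the case $\beta\in\Delta(\alpha)$ it is $0+c_2+c_1+r=(s-r)+r=s$; in the case $\beta\in\Phi(\alpha)$ it is $0+0+0+s=s$. All three give $s$, as claimed. The mild subtlety is that $\Delta$ is a \emph{symmetric} relation so that $\alpha\in\Delta(\beta)\iff\beta\in\Delta(\alpha)$; this holds because in a rank $3$ action the $G_\alpha$-orbits pair up symmetrically (equivalently the graph is undirected), and I would state this explicitly.

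Finally the orthogonality consequence: $\langle U'_{c_1},U_{c_2}\rangle=0$ (and symmetrically). Since $U_{c_2}$ is spanned by the $v_{c_2,\beta}$ and $U'_{c_1}$ is spanned by the differences $v_{c_1,\alpha}-v_{c_1,\alpha'}$, it suffices to note $\langle v_{c_1,\alpha}-v_{c_1,\alpha'},v_{c_2,\beta}\rangle=s-s=0$ by the previous paragraph, and similarly with the roles of $c_1,c_2$ swapped. This finishes the lemma. The main obstacle, such as it is, is bookkeeping: getting the sign conventions in the quadratic \eqref{quadratic} consistent with the definition $v_{c,\alpha}=c\alpha+[\Delta(\alpha)]$ and with the later claim that the eigenvalues of $T$ are $-c_1,-c_2$, and correctly handling the $[\Omega]$ term when reducing the quadratic relation for $T$ to $S(\FF\Omega)$ — everything else is a short direct computation with the intersection numbers $a,b,r,s$.
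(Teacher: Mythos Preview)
The paper does not supply a proof of this lemma; it is quoted from Liebeck~\cite{L1} without argument. Your proposal is correct and is essentially Liebeck's original proof. The key identity $T^2(\alpha)=a\alpha+r[\Delta(\alpha)]+s[\Phi(\alpha)]$, which after substituting $[\Phi(\alpha)]=[\Omega]-\alpha-[\Delta(\alpha)]$ becomes $(T+c_1)(T+c_2)=0$ on $S(\FF\Omega)$, immediately yields the first claim since $c\cdot\id+T$ is then invertible on $S(\FF\Omega)$ whenever $c\notin\{c_1,c_2\}$. The inner-product calculation via the three-case split on $\beta$ (equal to $\alpha$, in $\Delta(\alpha)$, in $\Phi(\alpha)$) together with Vieta's relations $c_1+c_2=s-r$ and $c_1c_2=s-a$ is exactly the intended computation, and the orthogonality of $U'_{c_1}$ with $U_{c_2}$ follows by taking differences. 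Your caveat about needing $\Delta$ to be a symmetric relation is apt: in the paper's setting this holds because $\Delta(\alpha)$ is defined by orthogonality, and it is part of Liebeck's standing hypotheses in~\cite{L1}.
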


\subsection{Outline of the proof} We first compute the roots of equation (\ref{quadratic}) and then
determine the graph submodules of $\FF P^{+1}$ and $\FF P^{-1}$ by
analyzing the geometry of $P$. As in the study of rank 3 permutation
modules in cross-characteristic for finite classical group acting on
singular points in~\cite{L1,L2}, the graph submodules are ``minimal"
in an appropriate sense (see Proposition~\ref{mainO+}).

The problem now is divided in two cases. In the easy case when the
graph submodules are different, we have seen from~(\ref{direct sum})
that their direct sum is $S(\FF P^\kappa)$ $(\kappa=\pm1)$, a
submodule of $\FF P^\kappa$ of codimension~$1$. Therefore the full
structure of $\FF P=\FF P^{+1}\oplus \FF P^{-1}$ can be determined
without significant effort.

The difficult case is when the graph submodules are the same (i.e.
$c_1=c_2$). We will see later that the graph submodules of $\FF
P^{+1}$ are equal if and only if those of $\FF P^{-1}$ as well as
$\FF P^0$ are equal and this happens when $\ell=\Char(\FF)=2$. We
handle this case by constructing some relations between $\FF P^0$,
$\FF P^{+1}$, and $\FF P^{-1}$ in~$\S$\ref{relations}.

\subsection{Further notation}

Given a finite group $G$, $\Irr(G)$ and $\IBR_{\ell}(G)$) will be
the sets of irreducible complex characters and irreducible
$\ell$-Brauer characters, respectively, of~$G$. For
$\chi\in\Irr(G)$, by $\overline{\chi}$ we always mean its reduction
modulo $2$. Following~\cite{ST}, we denote by $\beta(M)$ the Brauer
character of $G$ afforded by an $\FF G$-module $M$. Furthermore, if
$\beta(M)\in \IBR_{\ell}(G)$ is a constituent of an $\ell$-Brauer
character $\varphi$, we say that $M$ is a constituent of $\varphi$.
Finally, if $H$ is a subgroup of $G$, we denote by $M|_{H}$ the
restriction of $M$ to $H$.

%%% -------------------------------------------------------------------------------------------------------------------------------

\section{Minimality of the graph submodules of $\FF P^{+1}$ and $\FF P^{-1}$}\label{minimality}

First we fix a basis $\mathcal{B}$ of $V$. If $Q$ is a quadratic
form of type $+$ on $V$ with dimension $2n$, we consider
$\mathcal{B}=\{e_1,...,e_n,f_1,...,f_n\}$ so that
$(e_i,f_j)=\delta_{ij}$ and $(e_i,e_j)=(f_i,f_j)=0$ for
$i,j=1,...,n$. If $Q$ is of type $-$ on $V$ with dimension $2n$,
then $\mathcal{B}=\{e_1,...,e_n,f_1,...,f_n\}$ where
$(e_i,f_j)=\delta_{ij}$ and $(e_i,e_n)=(f_i,f_n)=0$ for
$i,j=1,...,n-1$, $(e_n, f_n)=0$, and $(e_n,e_n)=(f_n,f_n)=1$.
Finally, if $\dim V=2n+1$, then
$\mathcal{B}=\{e_1,...,e_n,f_1,...,f_n,g\}$ where
$(e_i,f_j)=\delta_{ij}$, $(e_i,e_j)=(f_i,f_j)=(e_i,g)=(f_i,g)=0$,
and $(g,g)=1$ for $i,j=1,...,n$.

Going back to the action of $G$ on $P^\kappa, \kappa=\pm1$, we
assume from now on that $\Delta(\alpha)\subset P^\kappa\setminus
\alpha$ consists of points orthogonal to $\alpha$ and
$\Phi(\alpha)\subset P^\kappa\setminus \alpha$ consists of points
not orthogonal to $\alpha$. Also, we use the notation $U^\kappa_c$
and $U^{'\kappa}_c$ for $U_c$ and $U'_c$, respectively. The graph
submodules of $\FF P^\kappa$ now are $U^{'\kappa}_{c_1}$ and
$U^{'\kappa}_{c_2}$ where $c_1$ and $c_2$ are roots of the
equation~(\ref{quadratic}). At this point we understand that $c_1$
and $c_2$ depend on~$\kappa$ but actually they do not, as we will
see later on.

As in the study of rank 3 permutation modules in
cross-characteristic for finite classical group acting on singular
points in~\cite{L1,L2}, the graph submodules are ``minimal" in the
following sense:

\begin{proposition}\label{mainO+} Suppose that $\Char(\FF)\neq3$.
Then every nonzero $\FF G$-submodule of $\FF P^\kappa (\kappa=\pm1)$
either is $T(\FF P^\kappa)$ or contains a graph submodule.
\end{proposition}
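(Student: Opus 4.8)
The plan is to show that any nonzero submodule $M \neq T(\FF P^\kappa)$ of $\FF P^\kappa$ contains one of the vectors $v_{c_i,\alpha}$ (or the corresponding difference $v_{c_i,\alpha}-v_{c_i,\beta}$ generating $U'^{\kappa}_{c_i}$), which by definition of the graph submodules would force $M \supseteq U'^{\kappa}_{c_1}$ or $M \supseteq U'^{\kappa}_{c_2}$. The natural strategy, following Liebeck's argument in~\cite{L1} and its adaptation in~\cite{HN}, is to exploit the action of a well-chosen point stabilizer $G_\alpha$ on $M$. Since $G_\alpha$ has only three orbits on $P^\kappa$, namely $\{\alpha\}$, $\Delta(\alpha)$, and $\Phi(\alpha)$, a nonzero $G_\alpha$-fixed vector in $M$ must be a linear combination $x\alpha + y[\Delta(\alpha)] + z[\Phi(\alpha)]$ with $(x,y,z) \neq (0,0,0)$. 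First I would argue that $M$ contains such a fixed vector: take any $0 \neq w \in M$, write $w = \sum a_\beta \beta$, pick $\alpha$ with $a_\alpha \neq 0$ (or more carefully, pick $\alpha$ so that the projections of $w$ to the three $G_\alpha$-orbit sums are not all proportional to $[\Omega]$), and average $w$ over $G_\alpha$ — but since we only have a field of cross characteristic, averaging may fail, so instead I would use the semisimplicity-free substitute: the $G_\alpha$-submodule of $M$ generated by $w$ lies inside the span of the orbit sums plus lower structure, and one extracts a fixed vector by an explicit summation trick or by noting $\FF P^\kappa|_{G_\alpha}$ has a specific structure. Cleanest is to observe that $\langle [\Delta(\alpha)], [\Phi(\alpha)], \alpha\rangle$ is exactly the space of $G_\alpha$-fixed points of $\FF P^\kappa$ (as the action is rank $3$), so $M^{G_\alpha} \subseteq \langle \alpha, [\Delta(\alpha)], [\Phi(\alpha)]\rangle$, and I need $M^{G_\alpha} \neq 0$ for a suitable $\alpha$.

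Granting a nonzero fixed vector $u = x\alpha + y[\Delta(\alpha)] + z[\Phi(\alpha)] \in M$, the next step is to apply the linear map $T$ of the outline (with $T(\beta) = [\Delta(\beta)]$), whose restriction to the fixed space is a $3\times 3$ matrix with entries expressible through the intersection parameters $a,b,r,s$ and $|\Omega|$; its eigenvalues on $S(\FF P^\kappa)$ are $-c_1,-c_2$ and on $T(\FF P^\kappa)$ is $a$. Using that $u$ (after subtracting a multiple of $[P^\kappa]$ to land in $S$) decomposes along the $-c_1$- and $-c_2$-eigenspaces, and that these eigenvectors are precisely (scalar multiples of) $v_{c_1,\alpha} - \tfrac{a}{|\Omega|}[P^\kappa]$ type vectors, I would conclude that $M$ contains $v_{c_1,\alpha}$ or $v_{c_2,\alpha}$ modulo $T(\FF P^\kappa)$, unless $u$ itself is a multiple of $[P^\kappa]$ — which is the exceptional case $M = T(\FF P^\kappa)$. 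One then checks that $v_{c_i,\alpha} \pmod{T}$ together with $G$-invariance of $M$ already forces the difference generators $v_{c_i,\alpha} - v_{c_i,\beta} \in M$, hence $U'^{\kappa}_{c_i} \subseteq M$. Whether $c_1 = c_2$ or not does not affect this core argument; when $c_1=c_2$ one just gets a single graph submodule, consistent with the statement.

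The main obstacle I anticipate is the very first reduction: showing $M^{G_\alpha} \neq 0$ for some $\alpha$ with the projection of that fixed vector onto $S(\FF P^\kappa)$ nonzero. In characteristic zero this is immediate by averaging, but in cross characteristic $\ell \neq 3$ one must rule out the possibility that $\ell$ divides relevant orbit lengths and the averaging idempotent is unavailable; the fix is to use that $|G_\alpha|$ and the suborbit lengths $a,b$ are explicitly known powers of $3$ times coprime factors, so that $\ell \nmid$ the needed index, or alternatively to invoke that $\FF P^\kappa$ is self-dual (via the $G$-invariant inner product of the outline) so that $M \neq 0$ implies $M \not\subseteq (\FF P^\kappa)^{G_\alpha\,\perp}$ for some $\alpha$, giving a nonzero pairing that produces the fixed vector. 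I would handle the genuinely tight cases — where $\ell$ divides one of the structure constants — by a direct case analysis using the explicit parameter values computed at the start of the outline, which is exactly the kind of finite check that the later sections of the paper carry out per family. The remainder is routine linear algebra with the $3\times 3$ matrix of $T$ on the fixed space, plus the standard observation that $U_{c}' = S(\FF P^\kappa)$ whenever $c$ is not a root of~(\ref{quadratic}), which pins down the graph submodules as the smallest $G$-submodules meeting $S(\FF P^\kappa)$ nontrivially inside each eigenspace.
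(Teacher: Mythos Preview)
Your strategy hinges on producing a nonzero $G_\alpha$-fixed vector inside $M$, and you correctly flag this as the main obstacle, but none of your proposed repairs actually closes it. The full point stabilizer $G_\alpha$ is (up to a factor of $2$) a smaller orthogonal group over $\FF_3$, so $|G_\alpha|$ is divisible by many primes other than $3$; neither $|G_\alpha|$ nor the suborbit sizes $a$, $b$ are $3$-powers, and your assertion that ``$\ell\nmid$ the needed index'' is simply false in general. The self-duality suggestion does not help either: by Frobenius reciprocity $M^{G_\alpha}\cong\Hom_G(\FF P^\kappa,M)$, so $M^{G_\alpha}\neq 0$ exactly when some simple constituent of the \emph{head} of $\FF P^\kappa$ lies in $M$. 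There is no a~priori reason an arbitrary submodule should contain such a constituent; indeed, knowing that the socle and head of $\FF P^\kappa$ share the graph submodules is essentially what the proposition is asserting, so you would be arguing in a circle. A secondary issue is your eigenspace step: when $\ell=2$ one has $c_1=c_2$, and $T$ need not act semisimply on the $3$-dimensional fixed space, so ``decompose along the $-c_1$- and $-c_2$-eigenspaces'' requires more care than you indicate.

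The paper avoids all of this by a different, more explicit mechanism. Rather than the full stabilizer $G_\alpha$, it sums over two specific $3$-subgroups $H$ and $K$ of orders $3^{2n-2}$ and $3^{2n-3}$ (essentially the unipotent radical of the parabolic fixing a singular point $\langle e_1\rangle$, and the subgroup of it fixing a chosen plus point $\phi_1$). Because $\ell\neq 3$, every coefficient produced by these summations is an invertible power of $3$. Starting from any $u\in M$ with two unequal coefficients, a short chain of such sums and subtractions yields an element of the shape
\[
c(\phi_1-\phi_2)+f\bigl([\Delta(\phi_1)]-[\Delta(\phi_2)]\bigr)\in M
\]
with $c\neq 0$, from which one reads off $U'_{c/f}\subseteq M$ and then invokes Lemma~\ref{liebeck}. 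This sidesteps both the fixed-vector existence problem and the non-diagonalizability of $T$ when $c_1=c_2$.
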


\begin{proof} We only give here the proof for the case
$G=O^+_{2n}(3)$ and $\kappa=+1$. Other cases are similar. We partly
follow some ideas and notation from~\cite{HN,L1,L2}.

Let $\phi_1:=\langle e_2+f_2\rangle$, $\phi_2:=\langle
e_1+e_2+f_2\rangle$, and $\phi_3:=\langle -e_1+ e_2+f_2\rangle$. Let
$\Delta_1:=\{\langle\sum_{i=1}^n(a_ie_i+b_if_i)\rangle\in P^{+1}\mid
b_1=1, a_2+b_2=0\}$,
$\Delta_2:=\{\langle\sum_{i=1}^n(a_ie_i+b_if_i)\rangle\in P^{+1}\mid
b_1=1, a_2+b_2=-1\}$,
$\Delta_3:=\{\langle\sum_{i=1}^n(a_ie_i+b_if_i)\rangle\in P^{+1}\mid
b_1=a_2+b_2=1\}$, $\Delta:=\Delta_1\cup \Delta_2\cup\Delta_3$, and
$\Phi:=P^{+1}\setminus \Delta$. It is clear that, for $i,j=1,2,3$,
\begin{equation}\label{Deltaphi}[\Delta(\phi_i)]-[\Delta(\phi_j)]=[\Delta_i]-[\Delta_j].\end{equation}
Consider a subgroup $H<G$ consisting of orthogonal transformations
sending elements of the basis $\{e_1,f_1,e_2,f_2,...,e_n,f_n\}$ to
those of basis $\{e_1,f_1+\sum_{i=1}^na_ie_i+\sum_{i=2}^nb_if_i$,
$e_2-b_2e_1,f_2-a_2e_1,...,e_n-b_ne_1,f_n-a_ne_1\}$ respectively,
where $a_i,b_i\in\FF_3$ and $-a_1=\sum_{i=2}^na_ib_i$. In other
words, $H$ is subgroup of isometries fixing $\langle e_1\rangle$ and
acting trivially on each factor of the series $0\leq \langle
e_1\rangle\leq \langle e_1\rangle^\perp\leq V$. Let $K$ be the
subgroup of $H$ consisting of transformations fixing $\phi_1$. Let
$P^{+1}_1$ be the set of plus points in $V_1=\langle
e_2,f_2,...,e_n,f_n\rangle$. For each $\langle w\rangle\in
P^{+1}_1$, define $B_{\langle w\rangle}=\{\langle w\rangle, \langle
e_1+w\rangle,\langle -e_1+w\rangle\}$. As in Propositions~2.1 and
2.2 of~\cite{L1} and Lemmas~3.2 and 3.3 of \cite{HN}, we have
\begin{enumerate}
\item[(i)] $|H|=3^{2n-2}$, $|K|=3^{2n-3}$, $|\Delta|=3^{2n-2}$, and $|\Delta_1|=|\Delta_2|=|\Delta_3|=3^{2n-3}$;
\item[(ii)] $H$ acts regularly on $\Delta$ and $K$ has $3$ orbits
$\Delta_1,\Delta_2, \Delta_3$ on $\Delta$;
\item[(iii)] $\Phi=\bigcup_{\langle w\rangle\in P^{+1}_1}B_{\langle w\rangle}$;
\item[(iv)] $K$ fixes $B_{\phi_1}$ point-wise and is transitive
on $B_w$ for every $\phi_1\neq\langle w\rangle\in P^{+1}_1$;
\item[(v)] $H$ acts transitively on $B_{\langle w\rangle}$ for every $\langle w\rangle\in P^{+1}_1$.
\end{enumerate}

Suppose that $U$ is a nonzero submodule of $\FF P^{+1}$. Assume
$U\neq T(\FF P^{+1})$, so that $U$ contains an element of the form
$$u=a\langle x\rangle+b\langle y\rangle+\sum_{\delta\in P^{+1}\backslash\{\langle x\rangle,\langle y\rangle\}}a_\delta\delta,$$
where $a,b,a_\delta\in\FF$ and $a\neq b$. If $(x,y)=0$, we choose an
element $\langle z\rangle\in P^{+1}$ so that $(x,z)$ and $(y,z)$ are
nonzero. Since $a\neq b$, the coefficient of $\langle z\rangle$ in
$u$ is different from either $a$ or $b$. Therefore, with no loss, we
may assume $(x,y)\neq 0$. Since $(e_2+f_2,e_1+e_2+f_2)\neq0$, there
exists $g'\in G$ such that $\langle x\rangle g'=\phi_1$ and $\langle
y\rangle g'=\phi_2$. Therefore, we can assume that
$u=a\phi_1+b\phi_2+\sum_{\delta\in
P^{+1}\backslash\{\phi_1,\phi_2\}}a_\delta\delta$.

Let $g\in G$ such that $e_1g=e_1$ and $(e_2+f_2)g=-(e_1+e_2+f_2)$.
Then $\phi_1 g=\phi_2$, $\phi_2 g=\phi_1$, $\phi_3 g=\phi_3$, and
therefore
$$u-ug=(a-b)(\phi_1-\phi_2)+\sum_{\delta\in P^{+1}\backslash\{\phi_1,\phi_2,\phi_3\}}b_\delta\delta\in
U\cap S(\FF P^{+1}),$$ where $b_\delta\in\FF$. Note that $u-ug\in
S(\FF P^{+1})$. Therefore, if $c_\delta=b_\delta/(a-b)$, we get $$
u_1:=(u-ug)/(a-b)=\phi_1-\phi_2+\sum_{\delta\in
P^{+1}\backslash\{\phi_1,\phi_2,\phi_3\}}c_\delta\delta\in U\cap
S(\FF P^{+1}).
$$
Hence we have $u_2:=\sum_{k\in K}u_1k\in U\cap S(\FF P^{+1})$.
Moreover,
$$u_2=3^{2n-3}(\phi_1-\phi_2)+\sum_{\delta\in\Delta}d_\delta\delta+\sum_{\langle
w\rangle\in P^{+1}_1, \langle w\rangle\neq \phi_1}d_{\langle
w\rangle}[B_{\langle w\rangle}],$$ where $d_\delta,d_{\langle
w\rangle}\in\FF$. Therefore $u_3:=\sum_{h\in H}u_2h\in U\cap S(\FF
P^{+1})$ with
$$u_3=(\sum_{\delta\in\Delta}d_\delta)[\Delta]+3^{2n-2}\sum_{\langle w\rangle\in P^{+1}_1,
\langle w\rangle\neq \phi_1}d_{\langle w\rangle}[B_{\langle
w\rangle}].$$ It follows that
$$u_4:=3^{2n-2}u_2-u_3=3^{4n-5}(\phi_1-\phi_2)+\sum_{\delta\in\Delta}f_\delta\delta\in
U\cap S(\FF P^{+1}),$$ where
$f_\delta=3^{2n-2}d_\delta-\sum_{\delta\in\Delta}d_\delta$. Hence
$$u_5:=\sum_{k\in
K}u_4k=3^{6n-8}(\phi_1-\phi_2)+f[\Delta_1]+f'[\Delta_2]+f''[\Delta_3]\in
U\cap S(\FF P^{+1}),$$ where $f,f',f''\in \FF$. In particular,
$f+f'+f''=0.$

{\bf Case 1:} $f+f'=-f''=0$. Then
$u_5=3^{6n-8}(\phi_1-\phi_2)+f[\Delta_1]+f'[\Delta_2]=3^{6n-8}(\phi_1-\phi_2)+f([\Delta_1]-[\Delta_2])=
3^{6n-8}(\phi_1-\phi_2)+f([\Delta(\phi_1)]-[\Delta(\phi_2)])\in U$
by~(\ref{Deltaphi}). Assume that $f=0$. Then
$u_5=3^{6n-8}(\phi_1-\phi_2)\in U$. It follows that
$\phi_1-\phi_2\in U$. Hence $\alpha-\beta\in U$ for every
$\alpha,\beta\in P^{+1}$ and therefore $U\supseteq S(\FF P^{+1})$,
which implies that $U$ contains a graph submodule.

It remains to consider $f\neq 0$. Then we have
$(3^{6n-8}/f)(\phi_1-\phi_2)+[\Delta(\phi_1)]-[\Delta(\phi_2)]\in
U$. It follows that
$(3^{6n-8}/f)(\alpha-\beta)+[\Delta(\alpha)]-[\Delta(\beta)]\in U$
for every $\alpha,\beta\in P^{+1}$ and hence $U\supseteq
U'_{3^{6n-8}/f}$, which implies that $U$ contains a graph submodule
by Lemma~\ref{liebeck}.

{\bf Case 2:} $f+f'\neq 0$. Define an element $g\in G$ which sends
elements of the basis $\{e_1$, $f_1$, $e_2$, $f_2,...,e_n,f_n\}$ to
those of basis $\{e_1,f_1-f_2,-e_2-e_1,-f_2,e_3,f_3,...,e_n,f_n\}$
respectively. It is easy to check that $\phi_1g=\phi_2$,
$\phi_2g=\phi_1$, and $\phi_3g=\phi_3$. Also, $\Delta_1g=\Delta_2$,
$\Delta_2g=\Delta_1$, and $\Delta_3g=\Delta_3$. So we have
$$u_6:=u_5-u_5g=2\cdot3^{6n-8}(\phi_1-\phi_2)+(f+f')([\Delta_1]-[\Delta_2])\in
U.$$ As above, we obtain $U\supseteq U'_{2\cdot3^{6n-8}/(f+f')}$,
which again implies that $U$ contains a graph submodule, as desired.
\end{proof}

We will see later for $\ell=2$ that $c_1=c_2$ and hence $\FF
P^\kappa$ has a unique graph submodule. In this case we set
$U^\kappa:=U^\kappa_{c_1}$ and $U^{'\kappa}:=U^{'\kappa}_{c_1}$. The
following lemma is an important property of the graph submodule and
is useful in determining the modulo $2$ structure of the permutation
module.

\begin{lemma}\label{lemmaO+1} For $\ell=2$ and $\kappa=\pm1$,
\begin{enumerate}
\item[(i)] $U^\kappa=T(\FF P^\kappa)\oplus U^{'\kappa}$. In particular, by
Proposition~\ref{mainO+}, $U^{'\kappa}$ is simple and $U^\kappa$ is
the socle of $\FF P^\kappa$;
\item[(ii)] $U^\kappa$ and
$U^{'\kappa}$ are self-dual. Furthermore, $U^{'\kappa}$ appears at
least twice as a composition factor of $\FF P^\kappa$.
\end{enumerate}
\end{lemma}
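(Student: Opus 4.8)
The plan is to prove (i) and (ii) in turn, using Proposition~\ref{mainO+} together with the inner-product structure on $\FF P^\kappa$ and the general facts recorded in Lemma~\ref{liebeck}.

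For part (i), I would first use the hypothesis $\ell=2$ and $c_1=c_2$ to locate $U^{'\kappa}$ inside $\FF P^\kappa$. Since $U^{'\kappa}$ is generated by the elements $v_{c_1,\alpha}-v_{c_1,\beta}$, it is contained in $S(\FF P^\kappa)$; on the other hand $v_{c_1,\alpha}=c_1\alpha+[\Delta(\alpha)]$ itself lies in $U^\kappa$ but, when $\ell=2$, one checks from the orbit sizes $a=|\Delta(\alpha)|$ computed in Proposition~\ref{mainO+}(i) (a power of $3$, hence odd mod $2$) that $v_{c_1,\alpha}$ has coordinate-sum $c_1+a\not\equiv 0$, so $v_{c_1,\alpha}\notin S(\FF P^\kappa)$ and in particular $v_{c_1,\alpha}\notin U^{'\kappa}$. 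Thus $U^\kappa$ strictly contains $U^{'\kappa}$, and since $U^\kappa$ is generated by $U^{'\kappa}$ together with a single $v_{c_1,\alpha}$, we get $\dim U^\kappa = \dim U^{'\kappa}+1$. To see that the extra dimension splits off as $T(\FF P^\kappa)$, I would argue that $[P^\kappa]\in U^\kappa$: indeed $\sum_{\alpha}v_{c_1,\alpha}=c_1[P^\kappa]+\sum_\alpha[\Delta(\alpha)] = c_1[P^\kappa]+a[P^\kappa]=(c_1+a)[P^\kappa]$, and $c_1+a$ is a nonzero scalar mod $2$ by the parity remark above, so $[P^\kappa]\in U^\kappa$, giving $T(\FF P^\kappa)\subseteq U^\kappa$. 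Since $T(\FF P^\kappa)\cap S(\FF P^\kappa)=0$ (again because $|P^\kappa|$ is odd mod $2$, so $[P^\kappa]$ has nonzero coordinate-sum) and $U^{'\kappa}\subseteq S(\FF P^\kappa)$, we get $T(\FF P^\kappa)\cap U^{'\kappa}=0$; combined with the dimension count this yields $U^\kappa=T(\FF P^\kappa)\oplus U^{'\kappa}$. The ``in particular'' clause is then immediate from Proposition~\ref{mainO+}: any nonzero proper submodule of $U^{'\kappa}$ would be a nonzero submodule of $\FF P^\kappa$ that is neither $T(\FF P^\kappa)$ (it lies in $S(\FF P^\kappa)$) nor contains a graph submodule (the only graph submodule is $U^{'\kappa}$ itself, which it is properly contained in), contradiction; so $U^{'\kappa}$ is simple. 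And any nonzero submodule of $\FF P^\kappa$ not equal to $T(\FF P^\kappa)$ contains $U^{'\kappa}$ or contains $T(\FF P^\kappa)$ via containing a graph submodule only if — more carefully — every nonzero submodule contains $T(\FF P^\kappa)$ or $U^{'\kappa}$; since $U^{'\kappa}$ is contained in every such submodule except possibly those equal to $T(\FF P^\kappa)$, and $T(\FF P^\kappa)\subseteq U^\kappa$, we conclude $U^\kappa=T(\FF P^\kappa)\oplus U^{'\kappa}$ is the unique minimal submodule structure, i.e. $U^\kappa=\soc(\FF P^\kappa)$ (it contains both minimal submodules $T(\FF P^\kappa)$ and $U^{'\kappa}$, and these are the only simple submodules).

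For part (ii), self-duality: $\FF P^\kappa$ is a permutation module, hence self-dual via the $G$-invariant form $\langle\cdot,\cdot\rangle$, and for a submodule $N$ one has $N^\perp\cong(\FF P^\kappa/N)^\ast$. By Lemma~\ref{liebeck}, when $c_1=c_2$ we have $\langle U^{'\kappa}_{c_1},U^\kappa_{c_1}\rangle=0$, i.e. $U^\kappa\subseteq (U^{'\kappa})^\perp$; comparing dimensions, $\dim(U^{'\kappa})^\perp=|P^\kappa|-\dim U^{'\kappa}=|P^\kappa|-(\dim U^\kappa-1)$, so we need $\dim U^\kappa = \tfrac12(|P^\kappa|+1)$ for equality — I would instead argue self-duality of $U^{'\kappa}$ directly: since $U^{'\kappa}$ is simple and appears as its own image under the duality $\FF P^\kappa\cong(\FF P^\kappa)^\ast$, and the form restricted to $U^{'\kappa}$ is either zero or nondegenerate, I claim it is nonzero. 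If $\langle U^{'\kappa},U^{'\kappa}\rangle=0$ then $U^{'\kappa}\subseteq (U^{'\kappa})^\perp$, and then $U^{'\kappa}$ being simple and contained in $S(\FF P^\kappa)=(T(\FF P^\kappa))^\perp$ forces, by counting composition factors of the self-dual module $S(\FF P^\kappa)/T(\FF P^\kappa)$ (note $T\subseteq S$ here since $|P^\kappa|$ even? — no, it is odd, so $T\not\subseteq S$), a multiplicity statement; cleanly, $U^{'\kappa}$ totally isotropic and simple implies its multiplicity as a composition factor of $\FF P^\kappa$ is at least $2$ (pair $U^{'\kappa}\hookrightarrow\FF P^\kappa$ with the quotient map $\FF P^\kappa\twoheadrightarrow(U^{'\kappa})^\ast$), which is exactly the last assertion of (ii); and self-duality of $U^{'\kappa}$ follows because a simple module over an algebraically closed field that embeds in a self-dual module and has a totally isotropic copy is isomorphic to its dual (alternatively, $O^\pm_m(3)$ has all characters rational-valued on the relevant classes, so all its simple $\FF G$-modules are self-dual). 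Finally $U^\kappa=T(\FF P^\kappa)\oplus U^{'\kappa}$ is self-dual as the direct sum of two self-dual modules.

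The main obstacle I expect is the argument that $U^{'\kappa}$ is totally isotropic for the invariant form (equivalently, that it appears with multiplicity $\ge 2$). The clean route is: since $c_1=c_2$, Lemma~\ref{liebeck} gives $\langle v_{c_1,\alpha},v_{c_1,\beta}\rangle=s$ for all $\alpha,\beta$, hence $\langle v_{c_1,\alpha}-v_{c_1,\beta},\,v_{c_1,\gamma}-v_{c_1,\delta}\rangle=0$ for all $\alpha,\beta,\gamma,\delta$; as these differences generate $U^{'\kappa}$, we get $\langle U^{'\kappa},U^{'\kappa}\rangle=0$ outright, with no case analysis. So $U^{'\kappa}\subseteq(U^{'\kappa})^\perp$, the multiplicity-$\ge 2$ claim follows from the composition-factor pairing between the sub $U^{'\kappa}\subseteq\FF P^\kappa$ and the quotient $\FF P^\kappa\twoheadrightarrow\FF P^\kappa/(U^{'\kappa})^\perp\cong (U^{'\kappa})^\ast$, and self-duality of $U^{'\kappa}$ then follows since this simple quotient is a composition factor of the self-dual $\FF P^\kappa$ isomorphic to $(U^{'\kappa})^\ast$ while $U^{'\kappa}$ is also one, and by the rationality of $G$ (or directly, since $\soc$ and head of the self-dual $\FF P^\kappa$ are dual and $U^{'\kappa}\subseteq\soc(\FF P^\kappa)$ forces $(U^{'\kappa})^\ast$ in the head) we conclude $U^{'\kappa}\cong(U^{'\kappa})^\ast$. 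This also re-proves, more transparently, that $U^{'\kappa}$ is self-dual, closing (ii).
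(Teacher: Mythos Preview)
Your argument for part~(i) has a concrete gap. You assert that $a=|\Delta(\alpha)|$ is ``a power of $3$, hence odd mod $2$'' and that $|P^\kappa|$ is odd; neither holds in general. For $G=O^+_{2n}(3)$ one has $a=3^{n-1}(3^{n-1}-1)/2$ and $|P^\kappa|=3^{n-1}(3^n-1)/2$, and a short computation shows both $c_1+a$ and $|P^\kappa|$ are even exactly when $n$ is even (for instance $n=4$ gives $c_1+a=352$ and $|P^\kappa|=1080$). In those cases $v_{1,\alpha}\in S(\FF P^\kappa)$ and $T(\FF P^\kappa)\subseteq S(\FF P^\kappa)$, so both of your key steps --- that $v_{1,\alpha}\notin U^{'\kappa}$ via its coordinate sum, and that $T(\FF P^\kappa)\cap U^{'\kappa}=0$ via $T\cap S=0$ --- collapse simultaneously. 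The paper circumvents this parity obstruction by summing $v_{1,\alpha}$ not over all of $P^\kappa$ but over an explicit pairwise non-orthogonal subset $S\subset P^\kappa$ of size $3^{n-1}$ (always odd), for which one checks geometrically that $\sum_{\alpha\in S}v_{1,\alpha}=[P^\kappa]$; the oddness of $|S|$ then gives $[P^\kappa]\equiv v_{1,\alpha_0}\pmod{U^{'\kappa}}$, which is what separates $T(\FF P^\kappa)$ from $U^{'\kappa}$ in all cases.

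Your treatment of part~(ii) eventually lands on the right mechanism: Lemma~\ref{liebeck} with $c_1=c_2$ gives $\langle v_{c,\alpha},v_{c,\beta}\rangle=s$ for all $\alpha,\beta$, hence $U^{'\kappa}\subseteq(U^{'\kappa})^\perp$, and the multiplicity statement follows \emph{once} $U^{'\kappa}$ is known to be self-dual. But your self-duality argument does not close. Observing that $\mathrm{head}(\FF P^\kappa)\cong\FF\oplus(U^{'\kappa})^*$ does not by itself force $(U^{'\kappa})^*\cong U^{'\kappa}$, and the appeal to ``rationality of $G$'' is an external fact that is neither stated nor proved in the paper. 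The paper instead establishes self-duality of $U^\kappa$ directly, by exhibiting the nondegenerate $G$-invariant symmetric form $[v_{c,\alpha},v_{c,\beta}]:=\langle v_{c,\alpha},\beta\rangle$ on $U^\kappa$; self-duality of the simple summand $U^{'\kappa}$ then follows from~(i), and the multiplicity claim from $U^{'\kappa}\subseteq(U^{'\kappa})^\perp$ exactly as you say.
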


\begin{proof} (i) If $G=O^-_{2n}(3)$ and $\kappa=-1$, let $S\subset P^\kappa$ be the set
of points of the form $\langle e_n+v\rangle$ where $v\in\langle
e_1,...,e_{n-1}\rangle$. In all other cases, let $S\subset P^\kappa$
be the set of points of the form $\langle e_n+\kappa f_n+v\rangle$
where $v\in\langle e_1,...,e_{n-1}\rangle$. We then have
$$\sum_{\alpha\in S}v_{1,\alpha}=\sum_{\alpha\in S}(\alpha+[\Delta(\alpha)])=
\sum_{\alpha\in S}\alpha+3^{i(\beta)}\sum_{\beta\in
P^\kappa\setminus S}\beta=[P^\kappa],$$ where $i(\beta)=n-2$ or
$n-1$. Hence $[P^\kappa]\in U^\kappa$ or equivalently $T(\FF
P^\kappa)\subset U^\kappa$. As $|S|=3^{n-1}\neq 0$ in $\FF$, $T(\FF
P^\kappa)\nsubseteq U^{'\kappa}$. Since $U^{'\kappa}$ is a submodule
of $U^{\kappa}$ of codimension at most $1$, $U^\kappa=T(\FF
P^\kappa)\oplus U^{'\kappa}$. This and Proposition~\ref{mainO+} show
that $U^{'\kappa}$ is simple and $U^\kappa$ is the socle of $\FF
P^\kappa$.

(ii) Recall that the submodule $U^\kappa$ consists of $\FF$-linear
combinations of $v_{c,\alpha}$ where $\alpha\in P^{\kappa}$ and
$c=c_1=c_2$. Define a bilinear form $[\cdot,\cdot]$ on $U^{\kappa}$
by $[v_{c,\alpha},v_{c,\beta}]=\langle v_{c,\alpha},\beta\rangle$.
It is clear that this form is symmetric, non-singular, and
$G$-invariant. Hence, $U^{\kappa}$ is self-dual. It then follows
that $U^{'\kappa}$ is also self-dual by~(i). Therefore $\FF
P^\kappa/U^{'\kappa\perp}\cong \Hom_\FF(U^{'\kappa\perp}, \FF)\cong
U^{'\kappa\perp}$. Combining this with the inclusion
$U^{'\kappa}\subseteq U^{'\kappa\perp}$ (by Lemma~\ref{liebeck}), we
have that $U^{'\kappa}$ appears at least twice as a composition
factor of $\FF P^\kappa$.
\end{proof}

%%% -------------------------------------------------------------------------------------------------------------------------------

\section{Relations between $\FF P^0$, $\FF P^{+1}$ and $\FF
P^{-1}$}\label{relations}

In this section, we establish some relations between the structures
of $\FF P^0$, $\FF P^{+1}$ and $\FF P^{-1}$. This helps us to
understand $\FF P^{+1}$ and $\FF P^{-1}$ from the known results of
$\FF P^0$ in~\cite{L2,ST}.

For $i,j\in \{0,+1,-1\}$, define
\begin{equation}\label{Q}\begin{array}{cccc}Q_{i,j}:&\FF P^i &\rightarrow &\FF P^j\\
&\alpha&\mapsto &[\{\beta\in
P^j\mid\beta\perp\alpha\}].\end{array}\end{equation} It is obvious
that $Q_{i,j}$ is an $\FF G$-homomorphism. Also, $Q_{i,j}(S(\FF
P^i))\subseteq S(\FF P^j)$. The following lemma is easy to check.

\begin{lemma}\label{main1} $\Im(Q_{i,j})$ and $\Im (Q_{i,j}|_{S(\FF P^i)})$ are
nonzero and different from $T(\FF P^j)$. Also, $\FF
P^i/\Ker(Q_{i,j})\cong \Im(Q_{i,j})$ and $S(\FF
P^i)/\Ker(Q_{i,j}|_{S(\FF P^i)})\cong \Im(Q_{i,j}|_{S(\FF
P^i)})$.\hfill$\Box$
\end{lemma}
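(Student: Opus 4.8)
The plan is to unwind Lemma~\ref{main1} into four essentially independent elementary claims and dispatch each one quickly. The map $Q_{i,j}$ is given on the basis $P^i$ by $\alpha\mapsto [\{\beta\in P^j : \beta\perp\alpha\}]$, so the only real content is (a) that $Q_{i,j}$ is nonzero and its image is not contained in $T(\FF P^j)$, (b) the same for its restriction to $S(\FF P^i)$, and (c) the two isomorphism statements, which are just the first isomorphism theorem for $\FF G$-modules once we know the maps are well-defined $\FF G$-homomorphisms carrying $S(\FF P^i)$ into $S(\FF P^j)$ (already noted in the excerpt). So the bulk of the proof is a short counting argument.

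For nonvanishing and for escaping $T(\FF P^j)$, I would argue as follows. Fix any $\beta_0\in P^j$; I claim there exist $\alpha_1,\alpha_2\in P^i$ with $\beta_0\perp\alpha_1$ but $\beta_0\not\perp\alpha_2$. This is a statement purely about the nondegenerate form on $V$ (over $\FF_3$, with $m\ge 6$): given a vector $w$ with $Q(w)=j$ (or $w$ singular if $j=0$), the set $w^\perp$ is a hyperplane and its complement is nonempty, and each contains vectors of any prescribed nonzero value of $Q$ as well as singular vectors once $\dim\ge 3$; here $m\ge 6$ gives ample room. Hence the coefficient of $\beta_0$ in $Q_{i,j}(\alpha_1)$ is $1$ and in $Q_{i,j}(\alpha_2)$ is $0$, so $Q_{i,j}\ne 0$; moreover $Q_{i,j}(\alpha_1)$ and $Q_{i,j}(\alpha_2)$ differ, and an element of $T(\FF P^j)$ is a scalar multiple of $[P^j]$, which has all coefficients equal, so neither image nor the whole image lies in $T(\FF P^j)$. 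For the restriction to $S(\FF P^i)$, take $Q_{i,j}(\alpha_1-\alpha_2)$: this is a nonzero element of $S(\FF P^j)$ (it has a $+1$ coefficient at $\beta_0$), and since $\dim S(\FF P^j)\cap T(\FF P^j)=0$ (as $|P^j|\ne 0$ in $\FF$, $\ell\ne 3$), its image is not in $T(\FF P^j)$ either. Then $\Im(Q_{i,j})$ and $\Im(Q_{i,j}|_{S(\FF P^i)})$ are submodules strictly larger than their intersection with $T(\FF P^j)$, hence nonzero and $\ne T(\FF P^j)$.

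The isomorphism statements are then immediate: $Q_{i,j}:\FF P^i\to \FF P^j$ is an $\FF G$-homomorphism, so $\FF P^i/\Ker(Q_{i,j})\cong \Im(Q_{i,j})$ as $\FF G$-modules; likewise $Q_{i,j}|_{S(\FF P^i)}:S(\FF P^i)\to S(\FF P^j)$ is an $\FF G$-homomorphism (using $Q_{i,j}(S(\FF P^i))\subseteq S(\FF P^j)$), giving $S(\FF P^i)/\Ker(Q_{i,j}|_{S(\FF P^i)})\cong \Im(Q_{i,j}|_{S(\FF P^i)})$.

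The main obstacle—such as it is—is the combinatorial claim that for each $\beta_0\in P^j$ one can find nonsingular points $\alpha_1\in \beta_0^{\perp}$ and $\alpha_2\notin \beta_0^{\perp}$ of the required type $i\in\{0,\pm1\}$; this requires knowing that a nondegenerate orthogonal $\FF_3$-space of dimension $m-1\ge 5$ and its affine complement each represent every value of $Q$ (including $0$). This is standard for forms over finite fields in dimension $\ge 3$, so it amounts to citing (or quickly re-deriving from the classification of $\FF_3$-quadratic forms) the representability facts; no heavy machinery is needed, and everything else is bookkeeping.
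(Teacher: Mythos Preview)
The paper gives no proof at all for this lemma---it is stated with a trailing $\Box$ and the remark that it ``is easy to check.'' Your approach of directly exhibiting $\alpha_1,\alpha_2\in P^i$ with $\beta_0\perp\alpha_1$, $\beta_0\not\perp\alpha_2$ and then reading off coefficients is exactly the intended verification, and the isomorphism statements are indeed just the first isomorphism theorem.

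There is, however, one genuine error in your argument. You claim that $S(\FF P^j)\cap T(\FF P^j)=0$ because ``$|P^j|\ne 0$ in $\FF$, $\ell\ne 3$.'' This is false: for instance, when $G=O_{2n}^+(3)$ one has $|P^\kappa|=3^{n-1}(3^n-1)/2$, and for $\ell=2$ with $n$ even this is even, so $T(\FF P^\kappa)\subset S(\FF P^\kappa)$. (Indeed, the paper explicitly uses $T(\FF P^{+1})\subset S(\FF P^{+1})$ in the $\ell=2$, $n$ even case of Proposition~\ref{proofO+2}.) Since Lemma~\ref{main1} is invoked precisely in the $\ell=2$ analysis, this case cannot be excluded.

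The fix is immediate and does not require $S(\FF P^j)\cap T(\FF P^j)=0$: the element $Q_{i,j}(\alpha_1-\alpha_2)$ has coefficient $1$ at $\beta_0$, and if you pick any $\beta_1\in P^j$ lying in $\alpha_1^\perp\cap\alpha_2^\perp$ (such $\beta_1$ exist because this intersection has dimension $\ge m-2\ge 4$ and hence contains points of every type), the coefficient at $\beta_1$ is $1-1=0$. Thus $Q_{i,j}(\alpha_1-\alpha_2)$ does not have constant coefficients and so is not in $T(\FF P^j)$, regardless of whether $\ell\mid |P^j|$.
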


Let $\rho^0$, $\rho^{+1}$, and $\rho^{-1}$ be the complex
permutation characters of $G$ afforded by permutation modules $\CC
P^0$, $\CC P^{+1}$, and $\CC P^{-1}$, respectively. Recall that $G$
acts with rank $3$ on each $P^i$ with $i=0,+1,-1$. Hence $\rho^i$
has $3$ constituents, all of multiplicity $1$ and exactly one of
them is the trivial character.

\begin{lemma}\label{main2} For $i,j\in \{0,+1,-1\}$, $\rho^i$ and $\rho^j$ have a
common nontrivial constituent.
\end{lemma}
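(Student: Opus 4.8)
The goal is to show that any two of the three permutation characters $\rho^0,\rho^{+1},\rho^{-1}$ share a common nontrivial irreducible constituent. The natural tool is the homomorphism $Q_{i,j}$ introduced in~(\ref{Q}), together with Lemma~\ref{main1}. The plan is to use $Q_{i,j}$ to produce a nonzero $\CC G$-module quotient of $\CC P^i$ that embeds into $\CC P^j$, and then to argue that this common module is not the trivial module, so it contributes a nontrivial irreducible common to both $\rho^i$ and $\rho^j$.

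Here is how I would carry this out. First, work over $\CC$, where $\CC P^i$ is semisimple and $\rho^i$ is a genuine decomposition into distinct irreducibles with multiplicity one, one of which is the trivial character $\mathbf 1_G$. By Lemma~\ref{main1}, $\Im(Q_{i,j})$ is a nonzero submodule of $\CC P^j$ different from $T(\CC P^j)$, and $\CC P^i/\Ker(Q_{i,j})\cong \Im(Q_{i,j})$; by semisimplicity $\Im(Q_{i,j})$ is then isomorphic to a direct summand of $\CC P^i$ as well. So $\Im(Q_{i,j})$ is a common submodule (up to isomorphism) of $\CC P^i$ and $\CC P^j$. Since $\rho^i$ is multiplicity-free, $\Im(Q_{i,j})$ decomposes into a subset of the three irreducible constituents of $\rho^i$, and likewise for $\rho^j$; hence every irreducible constituent of $\Im(Q_{i,j})$ is common to $\rho^i$ and $\rho^j$. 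It therefore suffices to exhibit one nontrivial irreducible constituent of $\Im(Q_{i,j})$, i.e., to show $\Im(Q_{i,j})\neq T(\CC P^j)$ and $\Im(Q_{i,j})\neq 0$ — but that is exactly the content of Lemma~\ref{main1}. Concretely: $\Im(Q_{i,j})$ is nonzero and strictly larger than the trivial submodule $T(\CC P^j)$, so it must contain at least one of the two nontrivial irreducible constituents of $\rho^j$, and that constituent is also a constituent of $\rho^i$.

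One subtlety to nail down is the case $i=j$, where the statement is trivial, and the case where $Q_{i,j}$ happens to be a scalar multiple of the ``all-ones'' map restricted to $T$ — but Lemma~\ref{main1} already rules out $\Im(Q_{i,j})=T(\CC P^j)$, so this cannot occur. Another point worth a sentence: the trivial character is a constituent of each $\rho^i$, so the shared constituent we are producing being \emph{nontrivial} is the whole content of the lemma; this is why it is essential that Lemma~\ref{main1} gives $\Im(Q_{i,j})\neq T(\CC P^j)$ rather than merely $\Im(Q_{i,j})\neq 0$. Finally, one should check $Q_{i,j}$ is nonzero at all when $i$ or $j$ equals a singular-point set — but for any $\alpha\in P^i$ the set $\{\beta\in P^j:\beta\perp\alpha\}$ is nonempty (the orthogonal complement $\langle\alpha\rangle^\perp$ has large dimension and contains points of every type since $m\ge 6$), so $Q_{i,j}(\alpha)\neq 0$, and this is subsumed in Lemma~\ref{main1}.

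**Main obstacle.** There is essentially no obstacle here once Lemma~\ref{main1} is granted: the argument is a short semisimplicity argument over $\CC$. The only thing requiring any care — and presumably the reason Lemma~\ref{main1} was isolated separately — is verifying that $\Im(Q_{i,j})$ genuinely exceeds the trivial submodule, equivalently that the map $Q_{i,j}$ is not, up to scalars, the projection onto $T(\CC P^j)$; geometrically this amounts to the easy observation that two non-orthogonal points $\alpha,\alpha'\in P^i$ have $\{\beta\in P^j:\beta\perp\alpha\}\neq\{\beta\in P^j:\beta\perp\alpha'\}$, which fails only in degenerate small-dimensional situations excluded by $m\ge 6$. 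Granting Lemma~\ref{main1}, the proof of Lemma~\ref{main2} is immediate.
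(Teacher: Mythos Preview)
Your proof is correct and takes essentially the same approach as the paper, which simply states that the lemma is an immediate consequence of Lemma~\ref{main1}. You have merely unpacked the one-line argument: over $\CC$ the image $\Im(Q_{i,j})$ is a nonzero submodule of $\CC P^j$ distinct from $T(\CC P^j)$, hence contains a nontrivial irreducible summand, which via $\CC P^i/\Ker(Q_{i,j})\cong\Im(Q_{i,j})$ is also a constituent of $\rho^i$.
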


\begin{proof}This is an immediate consequence of Lemma~\ref{main1}.
\end{proof}

%%% -------------------------------------------------------------------------------------------------------------------------------

\section{The orthogonal groups $O^+_{2n}(3)$}\label{sectionO+}

In this section, we always assume $G=O^{+}_{2n}(3)$. For
$\kappa=\pm1$, we have
$P^\kappa=\{\langle\sum_{i=1}^n(a_ie_i+b_if_i)\rangle\mid a_i,
b_i\in \FF_3, \sum_{i=1}^n a_ib_i=\kappa\}$ and
$|P^\kappa|=3^{n-1}(3^n-1)/2$. The parameters of the action of $G$
on $P^\kappa$ are:
$$a=\frac{3^{n-1}(3^{n-1}-1)}{2}, b=3^{2n-2}-1, r=\frac{3^{n-2}(3^{n-1}+1)}{2}, s=\frac{3^{n-1}(3^{n-2}-1)}{2}.$$
The equation~(\ref{quadratic}) now has two roots $3^{n-2}$ and
$-3^{n-1}$. Therefore, $\FF P^\kappa$ has graph submodules
$U^{'\kappa}_{3^{n-2}}$ and $U^{'\kappa}_{-3^{n-1}}$.

We note that if $\ell=\Char(\FF)\neq 2,3$ then $3^{n-2}\neq
-3^{n-1}$ and therefore two graph submodules are different.

\begin{lemma}\label{dimensionO+} If $\ell=\Char(\FF)\neq 2,3$,
then
$$\dim U^{'\kappa}_{3^{n-2}}=\frac{(3^n-1)(3^{n-1}-1)}{8},
\dim U^{'\kappa}_{-3^{n-1}}=\frac{3^{2n}-9}{8}\text{ and }
U^{'+1}_{-3^{n-1}}\cong U^{'-1}_{-3^{n-1}}.$$
\end{lemma}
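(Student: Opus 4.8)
The plan is to read off the dimensions of the two graph submodules from the linear system~(\ref{dimension}), and then to establish the isomorphism $U^{'+1}_{-3^{n-1}}\cong U^{'-1}_{-3^{n-1}}$ by a separate, more structural argument. For the dimension count, I would substitute $|\Omega|=|P^\kappa|=3^{n-1}(3^n-1)/2$, the eigenvalues $-c_1=3^{n-1}$, $-c_2=-3^{n-2}$ (so $c_1=-3^{n-1}$, $c_2=3^{n-2}$), and $a=3^{n-1}(3^{n-1}-1)/2$ into
\begin{equation*}
\left\{\begin{array}{l}
\dim U'_{c_1}+\dim U'_{c_2}=|P^\kappa|-1,\\[1mm]
c_2\dim U'_{c_1}+c_1\dim U'_{c_2}=a,
\end{array}\right.
\end{equation*}
i.e.\ $3^{n-2}\dim U'_{-3^{n-1}}-3^{n-1}\dim U'_{3^{n-2}}=a$. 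Solving the $2\times 2$ system (everything here is genuinely solvable over $\QQ$, then reduced mod~$\ell$; since $\ell\neq 2,3$ the determinant $c_2-c_1=3^{n-2}+3^{n-1}=4\cdot 3^{n-2}$ is invertible, so the mod-$\ell$ dimensions are exactly the reductions of the rational answers) should give $\dim U'_{3^{n-2}}=(3^n-1)(3^{n-1}-1)/8$ and $\dim U'_{-3^{n-1}}=|P^\kappa|-1-(3^n-1)(3^{n-1}-1)/8$, and a short simplification of $3^{n-1}(3^n-1)/2-1-(3^n-1)(3^{n-1}-1)/8$ yields $(3^{2n}-9)/8$. This part is routine; the only thing to double–check is the identification of which root gives which eigenvalue, which follows from the remark after Lemma~\ref{liebeck} together with the sign conventions fixed just before Proposition~\ref{mainO+}.

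The genuinely substantive point is $U^{'+1}_{-3^{n-1}}\cong U^{'-1}_{-3^{n-1}}$. My approach is to exhibit an explicit $\FF G$-homomorphism between $\FF P^{+1}$ and $\FF P^{-1}$ that carries one graph submodule onto the other, using the maps $Q_{i,j}$ of~(\ref{Q}). Concretely, I would consider $Q_{+1,-1}\colon\FF P^{+1}\to\FF P^{-1}$, $\alpha\mapsto[\{\beta\in P^{-1}\mid\beta\perp\alpha\}]$, and its companion $Q_{-1,+1}$. Since $\FF\Omega$ carries the $G$-invariant inner product, one checks that $Q_{-1,+1}$ is the adjoint of $Q_{+1,-1}$, so $\Ker(Q_{+1,-1})^{\perp}=\Im(Q_{-1,+1})$ and vice versa. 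Because $G$ acts with rank~$3$ on each of $P^{+1}$ and $P^{-1}$, each of $\FF P^{+1}$, $\FF P^{-1}$ has (over $\CC$, hence, after checking multiplicities are coprime to $\ell\neq 2,3$, also mod $\ell$) only three composition factors; by Proposition~\ref{mainO+} the nontrivial factors of $S(\FF P^\kappa)$ are precisely the simple modules $U^{'\kappa}_{3^{n-2}}$ and $U^{'\kappa}_{-3^{n-1}}$, which by the dimension formula are non-isomorphic (distinct dimensions, using $\ell\neq 2,3$). Lemma~\ref{main2} forces $\rho^{+1}$ and $\rho^{-1}$ to share a nontrivial constituent; comparing dimensions of the two candidates on each side, the shared constituent must be the large one, giving $U^{'+1}_{-3^{n-1}}\cong U^{'-1}_{-3^{n-1}}$. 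I would phrase this cleanly: $Q_{+1,-1}$ restricted to $S(\FF P^{+1})$ has image a nonzero submodule of $S(\FF P^{-1})$ different from $T(\FF P^{-1})$ (Lemma~\ref{main1}), hence containing a graph submodule, and a dimension/multiplicity count pins the isomorphism type.

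A cleaner alternative, which I would actually prefer to write, uses the canonical isomorphism noted in the introduction between $O(V,Q)$ and $O(V,-Q)$: scaling the form swaps plus and minus points, $P^{-1}(Q)=P^{+1}(-Q)$, and for $m=2n$ even the forms $Q$ and $-Q$ lie in the same $O$-class, so there is an element $g\in\GL(V)$ normalizing $G$ with $P^{+1}g=P^{-1}$; this induces an $\FF$-linear $G$-equivariant (up to the diagonal automorphism, which acts trivially on isomorphism classes here) bijection $\FF P^{+1}\to\FF P^{-1}$ carrying the eigenspaces of the map $T$ to the eigenspaces of the corresponding map for $P^{-1}$, hence $U^{'+1}_{c}\cong U^{'-1}_{c}$ for both roots $c$ — in particular for $c=-3^{n-1}$. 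I expect the main obstacle to be bookkeeping the ``diagonal automorphism'' subtlety: one must argue that twisting $\FF P^{-1}$ by a diagonal automorphism of $G$ does not change the isomorphism type of these particular graph submodules (for instance because they are the unique composition factors of their respective dimensions in a rank-$3$ module, so any automorphism permutes the finite set of candidate modules trivially by dimension count). Once that is in place the isomorphism is immediate and uniform for all relevant $\ell\neq 3$, which also sets up the $\ell=2$ discussion later.
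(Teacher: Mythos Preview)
Your dimension computation via the system~(\ref{dimension}) is fine and is exactly what the paper does.

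The isomorphism $U^{'+1}_{-3^{n-1}}\cong U^{'-1}_{-3^{n-1}}$, however, has a real gap in both of your proposed routes, and the gap is the same in each. In route~(a) you invoke Lemma~\ref{main2} for the pair $(+1,-1)$ and then say ``comparing dimensions of the two candidates on each side, the shared constituent must be the large one.'' But the two nontrivial constituents of $\FF P^{+1}$ have dimensions $(3^n-1)(3^{n-1}-1)/8$ and $(3^{2n}-9)/8$, and the two nontrivial constituents of $\FF P^{-1}$ have \emph{exactly the same two dimensions}. So a dimension comparison between $\rho^{+1}$ and $\rho^{-1}$ cannot decide which pair is shared. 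In fact the small ones $U^{'+1}_{3^{n-2}}$ and $U^{'-1}_{3^{n-2}}$ are \emph{not} isomorphic (this is established later, in the proof of Proposition~\ref{proofO+2}), so it is not simply that both pairs match. Route~(b) hits the identical wall: the diagonal automorphism $\tau$ gives $U^{'-1}_{c}\cong (U^{'+1}_{c})^{\tau}$, and you then want $(U^{'+1}_{-3^{n-1}})^{\tau}\cong U^{'+1}_{-3^{n-1}}$. Your proposed justification, that dimension pins down the isomorphism class, fails for the same reason---and indeed $\tau$ genuinely swaps the non-isomorphic modules $X$ and $Y$, so $\tau$ does act nontrivially on isomorphism classes of the relevant dimensions.

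The paper's remedy is to route the comparison through $\FF P^0$ rather than directly between $\FF P^{+1}$ and $\FF P^{-1}$. By~\cite{L2} the graph submodules of $\FF P^0$ have dimensions $(3^n-1)(3^{n-1}+3)/8$ and $(3^{2n}-9)/8$; applying Lemma~\ref{main1} and Proposition~\ref{mainO+} (together with the analogous minimality for $\FF P^0$) to the pair $(0,\kappa)$, the only dimension common to the constituent lists of $\FF P^0$ and $\FF P^{\kappa}$ is $(3^{2n}-9)/8$. This forces $U^{'\kappa}_{-3^{n-1}}$ to be isomorphic to the large graph submodule $U'_d$ of $\FF P^0$, for each $\kappa=\pm 1$, and the desired isomorphism follows. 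Your diagonal-automorphism idea can also be rescued this way: $\tau$ fixes $P^0$ setwise, and since the two graph submodules of $\FF P^0$ have distinct dimensions they are each $\tau$-stable, whence $U'_d{}^{\tau}\cong U'_d$---but you still need the link $U^{'\kappa}_{-3^{n-1}}\cong U'_d$, which is precisely the paper's step.
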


\begin{proof} The dimensions of $U^{'\kappa}_{3^{n-2}}$ and $U^{'\kappa}_{-3^{n-1}}$
follow from~(\ref{dimension}). Using results about the permutation
module for $G$ acting on $P^0$ in~\cite{L2}, we see that $\FF P^0$
has two graph submodules of dimensions $(3^n-1)(3^{n-1}+3)/8$ and
$(3^{2n}-9)/8$, which we temporarily denote by $U'_c$ and $U'_d$,
respectively. By Theorem~2.1 of~\cite{L2}, $U'_c$ and $U'_d$ are
minimal in the same sense as in Proposition \ref{mainO+}. Applying
Lemma~\ref{main1} and Proposition~\ref{mainO+}, we deduce that
$U^{'+1}_{-3^{n-1}}\cong U'_d$ and $U^{'-1}_{-3^{n-1}}\cong U'_d$
and the lemma follows.
\end{proof}

\begin{proposition}\label{proofO+1}
Theorem~\ref{theorem} holds when $G=O^+_{2n}(3),n\geq3$ and
$\ell\neq 2,3$.
\end{proposition}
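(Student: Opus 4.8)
The plan is to combine the dimension/minimality information from Lemma~\ref{dimensionO+} and Proposition~\ref{mainO+} with the character-theoretic bookkeeping coming from the rank~$3$ permutation character $\rho^\kappa$. Since $\ell\neq 2,3$, the two roots $3^{n-2}$ and $-3^{n-1}$ of~(\ref{quadratic}) are distinct, so by~(\ref{direct sum}) we have $U^{'\kappa}_{3^{n-2}}\oplus U^{'\kappa}_{-3^{n-1}}=S(\FF P^\kappa)$, a submodule of codimension~$1$ in $\FF P^\kappa$. First I would argue that each graph submodule is irreducible: by Proposition~\ref{mainO+} every nonzero submodule either equals $T(\FF P^\kappa)$ or contains a graph submodule, and since $T(\FF P^\kappa)\not\subseteq S(\FF P^\kappa)$ while both graph submodules lie inside $S(\FF P^\kappa)$, a proper nonzero submodule of $U^{'\kappa}_{c_i}$ would have to contain the \emph{other} graph submodule too — impossible inside a direct summand. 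Hence $U^{'\kappa}_{3^{n-2}}$ and $U^{'\kappa}_{-3^{n-1}}$ are simple, of the dimensions $X:=(3^n-1)(3^{n-1}-1)/8$ and $Z:=(3^{2n}-9)/8$ recorded in Lemma~\ref{dimensionO+}.

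Next I would pin down the socle and head structure of the full module $\FF P^\kappa$. Self-duality of the permutation module (via the $G$-invariant inner product $\langle\cdot,\cdot\rangle$) gives $\FF P^\kappa/S(\FF P^\kappa)^\perp\cong T(\FF P^\kappa)$ and, since $S(\FF P^\kappa)^\perp=T(\FF P^\kappa)$ when $\ell\nmid |P^\kappa|$, one expects $\FF P^\kappa=T(\FF P^\kappa)\oplus S(\FF P^\kappa)$ precisely when $\ell\nmid |P^\kappa|=3^{n-1}(3^n-1)/2$, i.e. when $\ell\nmid(3^n-1)$. In that case $\FF P^\kappa\cong \FF\oplus U^{'\kappa}_{3^{n-2}}\oplus U^{'\kappa}_{-3^{n-1}}$ is semisimple; writing $X$, $Z$ for these constituents and noting (from Lemma~\ref{dimensionO+}) that the $Z$'s for $\kappa=\pm 1$ coincide while the $X$'s are interchanged by the diagonal automorphism, one reads off $\FF P=\FF P^{+1}\oplus\FF P^{-1}\cong 2\FF\oplus X\oplus Y\oplus 2Z$, matching the first line of Table~1. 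When $\ell\mid(3^n-1)$ the trivial module $T(\FF P^\kappa)$ is a submodule of $S(\FF P^\kappa)$; since $S(\FF P^\kappa)=U^{'\kappa}_{3^{n-2}}\oplus U^{'\kappa}_{-3^{n-1}}$ with simple summands of dimensions $X,Z$, the trivial must sit inside exactly one of them — and a dimension/Brauer-character count (the constituent of dimension $X$ corresponds to a Brauer character of degree coprime to $\ell$ in this range, whereas the degree-$Z$ one is not) forces $T(\FF P^\kappa)\subset U^{'\kappa}_{-3^{n-1}}$. Then $U^{'\kappa}_{-3^{n-1}}$ is a non-split extension $\FF{-}Z{-}\FF$ (uniserial with the reduction $\overline{Z}$ of dimension $Z=(3^{2n}-9)/8-\delta_{\ell,3^n-1}$ in the middle), self-dual by Lemma~\ref{liebeck}-type arguments, giving the second line of Table~1 after adding the $\kappa=+1$ and $\kappa=-1$ pieces.

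The remaining point is to verify that the two simple graph constituents are genuinely non-isomorphic to each other and that no further identifications occur between constituents of $\FF P^{+1}$ and $\FF P^{-1}$ beyond $U^{'+1}_{-3^{n-1}}\cong U^{'-1}_{-3^{n-1}}$; here I would invoke Lemma~\ref{main2} together with the explicit decomposition of $\rho^\kappa$ into its three rank-$3$ constituents, comparing degrees with those of $\rho^0$ (known from~\cite{L2,ST}) to match each $\FF$-constituent with a specific $\ell$-Brauer character and to confirm the reductions $\dim X$, $\dim Z$ including the $\delta_{\ell,3^n-1}$ correction. I expect the main obstacle to be exactly this last step: cleanly establishing that the lift of the degree-$Z$ constituent is the \emph{same} irreducible Brauer character for $\kappa=+1$, $\kappa=-1$ and $\kappa=0$ — i.e. ruling out that $S(\FF P^\kappa)$ decomposes differently for the two values of $\kappa$ — and correctly locating the trivial composition factor inside $U^{'\kappa}_{-3^{n-1}}$ rather than $U^{'\kappa}_{3^{n-2}}$ when $\ell\mid(3^n-1)$, which is what produces the asymmetry between $X$ (staying irreducible) and $Z$ (acquiring trivial socle and head) in the tables.
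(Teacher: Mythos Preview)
Your overall strategy matches the paper's, and the case $\ell\nmid(3^n-1)$ is fine. But the case $\ell\mid(3^n-1)$ contains a self-contradiction and a misidentification that together leave a genuine gap.

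First, you argue that both graph submodules are simple using ``$T(\FF P^\kappa)\not\subseteq S(\FF P^\kappa)$'', and then two sentences later assert that when $\ell\mid(3^n-1)$ the trivial module sits inside one of these ``simple summands''. The simplicity argument in your first paragraph only applies when $\ell\nmid(3^n-1)$; when $\ell\mid(3^n-1)$ the graph submodule $U^{'\kappa}_{-3^{n-1}}$ is \emph{not} simple --- it has $T(\FF P^\kappa)$ as its unique minimal submodule and structure $\FF-Z$, not $\FF-Z-\FF$. The three-step uniserial piece $\FF-Z-\FF$ in Table~1 is $U^{\kappa}_{-3^{n-1}}$ (no prime), which has dimension $(3^{2n}-9)/8+1$; you have conflated $U^{'\kappa}$ with $U^{\kappa}$ here. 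The paper's decomposition is $\FF P^\kappa=U^{'\kappa}_{3^{n-2}}\oplus U^{\kappa}_{-3^{n-1}}$, and you never produce a complement to $U^{'\kappa}_{3^{n-2}}$ in the full module $\FF P^\kappa$ (as opposed to in $S(\FF P^\kappa)$).

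Second, your proposed mechanism for deciding that $T(\FF P^\kappa)\subset U^{'\kappa}_{-3^{n-1}}$ rather than $U^{'\kappa}_{3^{n-2}}$ --- a ``dimension/Brauer-character count'' based on coprimality of the degree to $\ell$ --- does not work as stated and is not what the paper does. The clean argument is elementary: $[P^\kappa]$ is an eigenvector of the adjacency operator $T$ with eigenvalue $a=|\Delta(\alpha)|$, and one checks that $a\equiv -c_1\pmod\ell$ (equivalently $a+c_1=3^{n-2}(3^n-1)/2\equiv 0$) while $a\not\equiv -c_2$, so $[P^\kappa]$ lies in the eigenspace $U^{'\kappa}_{-3^{n-1}}$. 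The paper records this simply as ``it is easy to see that $T(\FF P^\kappa)$ is contained in $U^{'\kappa}_{-3^{n-1}}$ but not in $U^{'\kappa}_{3^{n-2}}$''. Once that is in place, Proposition~\ref{mainO+} forces $U^{'\kappa}_{3^{n-2}}$ simple and $U^{\kappa}_{-3^{n-1}}$ uniserial, and a dimension count gives the direct sum decomposition of $\FF P^\kappa$.
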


\begin{proof}
First, consider $\ell\nmid (3^n-1)$. Then $[P^\kappa]\notin S(\FF
P^\kappa)$ and we have
$$\begin{array}{lll}\FF P&\cong &\FF P^{+1}\oplus \FF P^{-1}=T(\FF P^{+1})\oplus
S(\FF P^{+1})\oplus T(\FF P^{-1})\oplus S(\FF P^{-1})\\
&\cong^{(\ref{direct sum})}&T(\FF P^{+1})\oplus
U^{'+1}_{3^{n-2}}\oplus U^{'+1}_{-3^{n-1}}\oplus T(\FF P^{-1})\oplus
U^{'-1}_{3^{n-2}}\oplus
U^{'-1}_{-3^{n-1}}\\
&\cong& 2\FF\oplus X\oplus Y\oplus 2Z,\end{array}$$ where
$X:=U^{'+1}_{3^{n-2}}$, $Y:=U^{'-1}_{3^{n-2}}$, and
$Z:=U^{'+1}_{-3^{n-1}}\cong U^{'-1}_{-3^{n-1}}$
(Lemma~\ref{dimensionO+}). The modules $X, Y$, and $Z$ are simple by
Proposition~\ref{mainO+}.

Next we consider $\ell\mid (3^n-1)$. It is easy to see that $T(\FF
P^\kappa)$ is contained in $U^{'\kappa}_{-3^{n-1}}$ but not in
$U^{'\kappa}_{3^{n-2}}$. We then have $\FF
P^\kappa=U^{'\kappa}_{3^{n-2}}\oplus U^\kappa_{-3^{n-1}},$ where
$U^{'\kappa}_{3^{n-2}}$ is simple and $U^\kappa_{-3^{n-1}}$ is
uniserial (by Proposition~\ref{mainO+}) with composition series
$$0\subset T(\FF P^\kappa) \subset U^{'\kappa}_{-3^{n-1}}\subset
U^\kappa_{-3^{n-1}}.$$ Putting $X:=U^{'+1}_{3^{n-2}}$,
$Y:=U^{'-1}_{3^{n-2}}$, and $Z:=U^{'\kappa}_{-3^{n-1}}/T(\FF
P^\kappa)$, we get
$$\FF P\cong X\oplus Y\oplus 2(\FF-Z-\FF),$$
as described in Table~\ref{tableO+}.
\end{proof}

For the rest of this section, we consider the case
$\ell=\Char(\FF)=2$, where the two graph submodules are the same. We
write $U^\kappa:=U_1^\kappa=U^\kappa_{3^{n-2}}=U^\kappa_{-3^{n-1}}$
and
$U^{'\kappa}:=U^{'\kappa}_1=U^{'\kappa}_{3^{n-2}}=U^{'\kappa}_{-3^{n-1}}$.

\begin{proposition}\label{proofO+2}
Theorem~\ref{theorem} holds when $G=O^+_{2n}(3), n\geq3$ and
$\ell=2$.
\end{proposition}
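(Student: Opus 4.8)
The plan is to run the mod-$2$ analysis for $G=O^+_{2n}(3)$ in close parallel to the singular-points case of~\cite{L1,L2}, using the bridge to $\FF P^0$ supplied by $\S$\ref{relations}. First I would record the relevant $2$-modular data: the decomposition matrix of $G$ restricted to the constituents of the three permutation characters $\rho^0,\rho^{+1},\rho^{-1}$, and the known structure of $\FF P^0$ from~\cite{L2,ST}. By Lemma~\ref{main2} the character $\rho^\kappa$ shares a nontrivial ordinary constituent with $\rho^0$; combined with the dimension formulas for the graph submodules already computed (the roots $3^{n-2},-3^{n-1}$ and equation~(\ref{dimension})) this pins down $\beta(U^{'\kappa})$ as a specific Brauer character. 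The point is that $\dim X=\dim Y=\frac{(3^n-1)(3^{n-1}-1)}{8}$ is the reduction mod $2$ of the smaller graph submodule, and the two composition factors of $\FF P^\kappa$ of dimension $\dim W$ come from the larger ordinary constituent of $\rho^\kappa$ after peeling off trivials and a copy of $U^{'\kappa}$.

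Next I would assemble the socle/radical layers. Lemma~\ref{lemmaO+1} already gives that $U^\kappa=T(\FF P^\kappa)\oplus U^{'\kappa}$ is the socle, that $U^{'\kappa}$ is simple and self-dual, and that $U^{'\kappa}$ occurs at least twice as a composition factor; by self-duality of $\FF P^\kappa$ (the permutation module carries the $G$-invariant form of $\S2$), the head is also $T\oplus U^{'\kappa}$. So the task reduces to identifying the remaining "middle" composition factors and the way they are glued. Here I would use $Q_{\kappa,0}$, $Q_{0,\kappa}$ and $Q_{+1,-1}$ from~(\ref{Q}): Lemma~\ref{main1} says $\FF P^\kappa/\Ker Q_{\kappa,0}\cong\Im Q_{\kappa,0}$, a nonzero submodule of $\FF P^0\ne T(\FF P^0)$, hence by the minimality statement for $\FF P^0$ (Theorem~2.1 of~\cite{L2}, cited in Lemma~\ref{dimensionO+}) it contains a graph submodule of $\FF P^0$; chasing these maps both ways identifies the non-graph, non-trivial factor $W$ of $\FF P^\kappa$ with a composition factor of $\FF P^0$ of the predicted dimension $\frac{(3^n-1)(3^{n-1}+3)}{8}-1-\delta_{2,n}$, and shows that the two factors of $\FF P^{+1}$ other than $\FF$ and $U^{'+1}$ are exactly $U^{'-1}$ and $W$ (and symmetrically for $\FF P^{-1}$), explaining why $X$ appears in the picture for $\FF P^{-1}$ and $Y$ in the picture for $\FF P^{+1}$.

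With the composition factors $\{\FF,\FF,X\ (=U^{'+1}),Y\ (=U^{'-1}),W,W\}$ of $\FF P^{+1}$ in hand (and multiplicities forced by $\dim\FF P^{+1}=3^{n-1}(3^n-1)/2$), the last step is the submodule lattice. I would argue that $\FF P^{+1}$ is indecomposable when $\ell=2$ (otherwise the socle, which is $T\oplus U^{'+1}$ with $U^{'+1}$ simple, would split off a summand forcing $T$ to be a summand, contradicting $T(\FF P^\kappa)\subset U^\kappa$ as in the proof of Lemma~\ref{lemmaO+1}(i)); then the Loewy structure is determined by the heart $U^\kappa/T(\FF P^\kappa)$ — a self-dual module with factors $U^{'+1},U^{'-1},W,W,\FF$ — whose uniserial-up-to-the-two-$W$'s shape I would extract from the corresponding statement for $\FF P^0$ via $Q_{+1,0}$ and $Q_{0,+1}$, using $\langle U'_{c_1},U_{c_2}\rangle=0$ (Lemma~\ref{liebeck}) to locate submodules by orthogonality. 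The parity split between $n$ even and $n$ odd enters only through $\delta_{2,n}$ in $\dim W$ and through which of the two middle composition factors is itself reducible; I would read this off the known $n$-parity dichotomy in the structure of $\FF P^0$. The main obstacle I anticipate is precisely this last bookkeeping: verifying that no extra non-split extensions are possible among $U^{'+1}$, $U^{'-1}$, $W$ and the trivials beyond those displayed in Table~\ref{tableO+}, i.e. computing the relevant $\mathrm{Ext}^1$'s — this will require the self-duality of $\FF P^{+1}$ together with a careful count of how often each factor can appear in $\soc$ and $\FF P^{+1}/\rad$, exactly as is done for singular points in~\cite{L2,ST}.
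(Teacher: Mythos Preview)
Your plan has two concrete errors and one genuine gap.

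First, the composition factor list $\{\FF,\FF,X,Y,W,W\}$ for $\FF P^{+1}$ is wrong: the dimensions do not add up to $|P^{+1}|=3^{n-1}(3^n-1)/2$. By Lemma~\ref{lemmaO+1}(ii) the graph submodule $U^{'+1}$ already occurs at least \emph{twice}, and the Sin--Tiep decomposition of $\overline{\psi}$ (the common constituent) is $\beta(W)+\beta(X)+\beta(Y)$ (with an extra $1$ when $n$ is even), so $W$ occurs only \emph{once}. The correct list is $\{\FF,X,X,Y,W\}$ for $n$ odd and $\{\FF,\FF,X,X,Y,W\}$ for $n$ even. Your claim that ``the two composition factors of $\FF P^\kappa$ of dimension $\dim W$ come from the larger ordinary constituent'' is simply not what the decomposition matrix says.

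Second, your indecomposability argument is false: for $n$ odd, $|P^{+1}|$ is odd, so $T(\FF P^{+1})\not\subset S(\FF P^{+1})$ and $\FF P^{+1}=T(\FF P^{+1})\oplus S(\FF P^{+1})$ splits. The inclusion $T(\FF P^\kappa)\subset U^\kappa$ from Lemma~\ref{lemmaO+1}(i) does not obstruct this; indeed $U^\kappa=T(\FF P^\kappa)\oplus U^{'\kappa}$ already has $T$ as a summand. The parity of $n$ thus affects more than just $\dim W$: it controls whether $T$ splits off.

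The most serious gap is that you never establish $U^{'+1}\ncong U^{'-1}$. Both have dimension $(3^n-1)(3^{n-1}-1)/8$, and the maps $Q_{i,j}$ together with minimality only tell you each $U^{'\kappa}$ lies in $\{X,Y\}$; they cannot separate the two possibilities, because $X$ and $Y$ play symmetric roles in $\FF P^0$. The paper handles this by a downward induction: assuming $\overline{\varphi^{+1}_n}=\overline{\varphi^{-1}_n}$, one uses the branching $\FF P_n^\kappa|_{G_{n-1}}\cong 5\FF P_{n-1}^\kappa\oplus 2\FF P_{n-1}^{-\kappa}\oplus 2\FF P_{n-1}^0\oplus 2\FF$ to deduce $\overline{\varphi^{+1}_{n-1}}=\overline{\varphi^{-1}_{n-1}}$, and then checks the base case $O_6^+(3)$ against the Atlas tables. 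Without this step, nothing in your outline distinguishes $X$ from $Y$, and the asserted structures for $\FF P^{+1}$ and $\FF P^{-1}$ collapse to the same module.

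Once the correct factor list and the inequality $U^{'+1}\ncong U^{'-1}$ are in place, the lattice for $n$ odd follows immediately from Proposition~\ref{mainO+} and self-duality (socle series $X-(Y\oplus W)-X$ for $S(\FF P^{+1})$), with no $\mathrm{Ext}^1$ bookkeeping needed; for $n$ even one further reads off the structure of $U^{'+1\perp}/U^{'+1}$ from the known lattice of $\FF P^0$ via $Q_{0,+1}$, exactly as you suggest.
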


\begin{proof} Recall that, for $i=0,\pm1$, $\rho^i$ is the permutation character of $G$ afforded by $\CC P^i$. Since $G$ acts with rank 3
on each $P^i$, we have $\rho^i=1+\varphi^i+\psi^i,$ where
$\varphi^i,\psi^i\in\Irr(G)$ and the $\psi^is$ have the same degree
$(3^{2n}-9)/3$ by Lemma~\ref{main2} and the proof of
Lemma~\ref{dimensionO+}. We set $\psi:=\psi^0=\psi^{+1}=\psi^{-1}$.
Note that $\varphi^{+1}(1)=\varphi^{-1}(1)=(3^n-1)(3^{n-1}-1)/8$
from Lemma~\ref{dimensionO+}. Since the smallest degree of a
nonlinear irreducible $2$-Brauer characters of $G$ is
$(3^n-1)(3^{n-1}-1)/8$ (see Theorem~1 of~\cite{Ho}),
$\overline{\varphi^{+1}}$ and $\overline{\varphi^{-1}}$ must be
irreducible.

First we give the proof for $n$ odd. From the study of
$\overline{\rho^0}$ in Corollary~6.5 of~\cite{ST}, we have
$\overline{\psi}=\beta(W)+\beta(X)+\beta(Y)$, where $W, X$, and $Y$
are simple $G$-modules of dimensions $(3^n-1)(3^{n-1}+3)/8-1$,
$(3^n-1)(3^{n-1}-1)/8$, and $(3^n-1)(3^{n-1}-1)/8$, respectively.
Furthermore, $X$ and $Y$ are not isomorphic. Now using
Proposition~\ref{lemmaO+1}(ii) together with the conclusion of the
previous paragraph, we deduce that $U^{'\kappa}$ is isomorphic to
either $X$ or $Y$ and
$\overline{\varphi^\kappa}=\beta(U^{'\kappa})$.

Let $U^{'+1}\cong X$. We wish to show that $U^{'-1}\cong Y$.
Assuming the contrary, we then have $U^{'+1}\cong U^{'-1}\cong X$
and therefore $\overline{\varphi^{+1}}=\overline{\varphi^{-1}}$. Now
we temporarily add subscript $n$ to the standard notations. Then
$\overline{\varphi_n^{+1}}=\overline{\varphi_n^{-1}}$ and hence
$\overline{\rho_n^{+1}}=\overline{\rho_n^{-1}}$. Since $\FF
P_n^\kappa\cong5\FF P_{n-1}^\kappa\oplus 2\FF
P^{-\kappa}_{n-1}\oplus 2\FF P_{n-1}^0\oplus 2\FF$ as $\FF
G_{n-1}$-modules, it follows that
$\overline{\rho_{n-1}^{+1}}=\overline{\rho_{n-1}^{-1}}$. By downward
induction, we get
$\overline{\varphi_3^{+1}}=\overline{\varphi_3^{-1}}$, which is a
contradiction by checking the complex and $2$-Brauer character
tables of $O_6^+(3)$ (see~\cite{Atl1,Atl2}).

We have shown that $U^{'+1}\cong X$ and $U^{'-1}\cong Y$. Notice
that, for $\kappa=\pm1$, $|P^\kappa|\neq 0$ (in $\FF$) and hence
$\FF P^\kappa=T(\FF P^\kappa)\oplus S(\FF P^\kappa)$ and the
composition factors of $S(\FF P^\kappa)$ are $X$ (twice), $Y$, and
$W$. By Proposition~\ref{mainO+} and the self-duality of $S(\FF
P^\kappa)$, the socle series of $S(\FF P^{+1})$ and $S(\FF P^{-1})$
are $X-(Y\oplus W)-X$ and $Y-(X\oplus W)-Y$, respectively, as
described in Table~\ref{tableO+}.

Now we consider $n$ even. In this case,
$\overline{\psi}=1+\beta(W)+\beta(X)+\beta(Y)$, where $W, X$, and
$Y$ are simple $G$-modules of dimensions $(3^n-1)(3^{n-1}+3)/8-2$,
$(3^n-1)(3^{n-1}-1)/8$, and $(3^n-1)(3^{n-1}-1)/8$, respectively.
Repeating the above arguments, we see that $U^{'+1}\cong X$ and
$U^{'-1}\cong Y$.

Notice that $T(\FF P^{+1})\subset S(\FF P^{+1})$, $T(\FF
P^{+1})^\perp=S(\FF P^{+1})$, and  $S(\FF P^{+1})/T(\FF P^{+1})$ is
self-dual and has composition factors: $X$ (twice), $Y$, and $W$.
Again, Proposition~\ref{mainO+} gives the socle series of $S(\FF
P^{+1})/T(\FF P^{+1})$: $X-(W\oplus Y)-X$. The submodule structure
of $\FF P^{+1}$ will be completely determined if we know that of
$U^{'+1\perp}/U^{'+1}$. Note that $U^{'+1\perp}/U^{'+1}$ has
composition factors: $\FF$ (twice), $W$, and $Y$. Using
Lemma~\ref{main1} and inspecting the structure of $\FF P^0$, we see
that $Y$ must be a submodule of $U^{'+1\perp}/U^{'+1}$ but $W$ is
not. Therefore, the structure of $U^{'+1\perp}/U^{'+1}$ is $Y\oplus
(\FF-W-\FF)$.

Similarly, the structure of $S(\FF P^{-1})/T(\FF P^{-1})$ is
$Y-(W\oplus X)-Y$ and that of $U^{'-1\perp}/U^{'-1}$ is $X\oplus
(\FF-W-\FF)$. Now $\FF P$ is determined completely as described in
Table~\ref{tableO+}.
\end{proof}

Propositions~\ref{proofO+1} and \ref{proofO+2} complete the proof of
Theorem~\ref{theorem} for the type ``$+$" orthogonal groups in even
dimension.

%%% -----------------------------------------------------------------------------------------------------------------------

\section{The orthogonal groups $O^-_{2n}(3)$}\label{sectionO-}

In this section, we always assume $G=O^-_{2n}(3)$. For
$\kappa=\pm1$, we have
$P^\kappa=\{\langle\sum_{i=1}^n(a_ie_i+b_if_i)\rangle\mid
\sum_{i=1}^{n-1} a_ib_i-a_n^2-b_n^2=\kappa, a_i, b_i\in \FF_3\}$ and
$|P^\kappa|=3^{n-1}(3^n+1)/2$. The parameters of the action of $G$
on $P^\kappa$ are:
$$a=\frac{3^{n-1}(3^{n-1}+1)}{2}, b=3^{2n-2}-1, r=\frac{3^{n-2}(3^{n-1}-1)}{2}, s=\frac{3^{n-1}(3^{n-2}+1)}{2}.$$
The equation~(\ref{quadratic}) now has two roots $-3^{n-2}$ and
$3^{n-1}$. Therefore, $\FF P^\kappa$ has graph submodules
$U^{'\kappa}_{-3^{n-2}}$ and $U^{'\kappa}_{3^{n-1}}$.

\begin{lemma}\label{dimensionO-} If $\ell=\Char(\FF)\neq 2,3$,
then
$$\dim U^{'\kappa}_{-3^{n-2}}=\frac{(3^n+1)(3^{n-1}+1)}{8}, \dim U^{'\kappa}_{3^{n-1}}
=\frac{3^{2n}-9}{8}\text{ and } U^{'1}_{3^{n-1}}\cong U^{'-1}_{3^{n-1}}.$$
\end{lemma}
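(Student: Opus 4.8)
The plan is to follow the proof of Lemma~\ref{dimensionO+} essentially line by line, since the whole set-up for $G=O^-_{2n}(3)$ is parallel. First I would obtain $\dim U^{'\kappa}_{-3^{n-2}}$ and $\dim U^{'\kappa}_{3^{n-1}}$ purely from the linear system~(\ref{dimension}): feeding in $|\Omega|=|P^\kappa|=3^{n-1}(3^n+1)/2$, the parameter $a=3^{n-1}(3^{n-1}+1)/2$, and the two roots $c_1=-3^{n-2}$, $c_2=3^{n-1}$ of~(\ref{quadratic}) computed just above the lemma, one solves a $2\times2$ linear system and reads off $\dim U^{'\kappa}_{-3^{n-2}}=(3^n+1)(3^{n-1}+1)/8$ and $\dim U^{'\kappa}_{3^{n-1}}=(3^{2n}-9)/8$. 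This is a routine computation and I would not spell it out.

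For the isomorphism $U^{'1}_{3^{n-1}}\cong U^{'-1}_{3^{n-1}}$ I would reuse the mechanism of Lemma~\ref{dimensionO+}. From the description of the permutation module on singular points for $O^-_{2n}(3)$ in~\cite{L2}, $\FF P^0$ has exactly two graph submodules, of dimensions $(3^n+1)(3^{n-1}-3)/8$ and $(3^{2n}-9)/8$, which I would temporarily call $U'_c$ and $U'_d$; by Theorem~2.1 of~\cite{L2} both are minimal in the same sense as in Proposition~\ref{mainO+}. Then I apply Lemma~\ref{main1} to the homomorphisms $Q_{\kappa,0}:\FF P^\kappa\to\FF P^0$ and $Q_{0,\kappa}:\FF P^0\to\FF P^\kappa$: their images are nonzero and different from the respective trivial submodules, so by minimality (Theorem~2.1 of~\cite{L2} for $\FF P^0$ and Proposition~\ref{mainO+} for $\FF P^\kappa$) each image contains a graph submodule of the target, while the isomorphisms $\FF P^\kappa/\Ker(Q_{\kappa,0})\cong\Im(Q_{\kappa,0})$ and $\FF P^0/\Ker(Q_{0,\kappa})\cong\Im(Q_{0,\kappa})$ exhibit a common graph constituent of $\FF P^\kappa$ and $\FF P^0$. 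A dimension comparison pins it down: the value $(3^{2n}-9)/8$ is attained by exactly one graph submodule on each side and, for all $n\geq3$, is distinct from both $(3^n+1)(3^{n-1}+1)/8$ and $(3^n+1)(3^{n-1}-3)/8$, so necessarily $U^{'\kappa}_{3^{n-1}}\cong U'_d$ for both $\kappa=+1$ and $\kappa=-1$; in particular $U^{'1}_{3^{n-1}}\cong U^{'-1}_{3^{n-1}}$.

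The only genuinely new input compared with Lemma~\ref{dimensionO+} is extracting from~\cite{L2} the dimensions of the two graph submodules of $\FF P^0$ for the minus-type group and recording that they are minimal; once that is in place the argument is identical. I expect the main (still minor) point of care to be the arithmetic check that $(3^{2n}-9)/8$ really is distinct from the competing constituent degrees on both sides, so that the dimension comparison isolates a single common simple module and the isomorphism is \emph{forced} rather than merely suggested.
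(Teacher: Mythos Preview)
Your proposal is correct and takes essentially the same approach as the paper: the paper's proof of this lemma is literally ``As in the proof of Lemma~\ref{dimensionO+}'', and what you have written is exactly the spelled-out analogue of that proof, with the appropriate minus-type parameters and graph-submodule dimensions from~\cite{L2} substituted in. The extra care you take in checking that $(3^{2n}-9)/8$ is distinct from the other candidate degrees is a reasonable sanity check that the paper leaves implicit.
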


\begin{proof}
As in the proof of Lemma \ref{dimensionO+}.
\end{proof}

\begin{proposition}\label{proofO-1}
Theorem \ref{theorem} holds when $G=O^-_{2n}(3), n\geq3$ and
$\ell\neq 2,3$.
\end{proposition}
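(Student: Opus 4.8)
The plan is to mimic the structure of Proposition~\ref{proofO+1} verbatim, since the only differences are arithmetic: the relevant divisor is now $3^n+1$ rather than $3^n-1$, and the graph submodule dimensions come from Lemma~\ref{dimensionO-} instead of Lemma~\ref{dimensionO+}. So the proof splits into the two subcases $\ell\nmid(3^n+1)$ and $\ell\mid(3^n+1)$ exactly as before, using that $\ell\neq 2,3$ guarantees $-3^{n-2}\neq 3^{n-1}$, hence the two graph submodules $U^{'\kappa}_{-3^{n-2}}$ and $U^{'\kappa}_{3^{n-1}}$ are distinct, and by~(\ref{direct sum}) their direct sum is $S(\FF P^\kappa)$.

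First I would handle $\ell\nmid(3^n+1)$. Here $|P^\kappa|=3^{n-1}(3^n+1)/2\neq 0$ in $\FF$, so $[P^\kappa]\notin S(\FF P^\kappa)$ and $\FF P^\kappa=T(\FF P^\kappa)\oplus S(\FF P^\kappa)$. Decomposing $S(\FF P^\kappa)$ via~(\ref{direct sum}) gives $\FF P\cong\FF P^{+1}\oplus\FF P^{-1}\cong 2\FF\oplus U^{'+1}_{-3^{n-2}}\oplus U^{'-1}_{-3^{n-2}}\oplus U^{'+1}_{3^{n-1}}\oplus U^{'-1}_{3^{n-1}}$; setting $X:=U^{'+1}_{-3^{n-2}}$, $Y:=U^{'-1}_{-3^{n-2}}$ and $Z:=U^{'+1}_{3^{n-1}}\cong U^{'-1}_{3^{n-1}}$ (the last isomorphism from Lemma~\ref{dimensionO-}) yields $2\FF\oplus X\oplus Y\oplus 2Z$, and $X,Y,Z$ are simple by Proposition~\ref{mainO+}. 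The dimensions match Table~\ref{tableO-} with $\delta_{\ell,3^n+1}=0$.

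Next, for $\ell\mid(3^n+1)$, one checks as in Proposition~\ref{proofO+1} that $T(\FF P^\kappa)\subseteq U^{'\kappa}_{3^{n-1}}$ but $T(\FF P^\kappa)\not\subseteq U^{'\kappa}_{-3^{n-2}}$ (the first because $[P^\kappa]$ lies in the eigenspace for the eigenvalue $a$ which, modulo $\ell$, coincides with one of the two graph eigenspaces; the second because the two graph submodules intersect trivially inside $S(\FF P^\kappa)$ when they are distinct, and $T\neq 0$). Hence $\FF P^\kappa=U^{'\kappa}_{-3^{n-2}}\oplus U^\kappa_{3^{n-1}}$ with $U^{'\kappa}_{-3^{n-2}}$ simple and $U^\kappa_{3^{n-1}}$ uniserial with composition series $0\subset T(\FF P^\kappa)\subset U^{'\kappa}_{3^{n-1}}\subset U^\kappa_{3^{n-1}}$ (uniseriality from Proposition~\ref{mainO+} applied inside $U^\kappa_{3^{n-1}}$, whose proper nonzero submodules must be $T$ or contain the graph submodule). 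Setting $X:=U^{'+1}_{-3^{n-2}}$, $Y:=U^{'-1}_{-3^{n-2}}$ and $Z:=U^{'\kappa}_{3^{n-1}}/T(\FF P^\kappa)$ gives $\FF P\cong X\oplus Y\oplus 2(\FF - Z - \FF)$, matching Table~\ref{tableO-}, and the dimension of $Z$ drops by $1$ relative to the generic case, consistent with the $\delta_{\ell,3^n+1}=1$ correction.

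I do not expect a genuine obstacle here: this proposition is the ``easy case'' where the graph submodules are distinct, and the argument is formally identical to Proposition~\ref{proofO+1} with $3^n-1$ replaced by $3^n+1$. The only point requiring a line of care is the claim that $T(\FF P^\kappa)$ lies in precisely one of the two graph submodules when $\ell\mid(3^n+1)$; this follows because $T(\FF P^\kappa)\subseteq S(\FF P^\kappa)=U^{'\kappa}_{-3^{n-2}}\oplus U^{'\kappa}_{3^{n-1}}$ is a trivial submodule, and by Proposition~\ref{mainO+} a nonzero submodule of the simple-or-one-step-larger module $U^\kappa_{c}$ containing no graph submodule must equal $T(\FF P^\kappa)$, forcing $T\subseteq U^\kappa_{3^{n-1}}$ and $T\not\subseteq U^{'\kappa}_{-3^{n-2}}$.
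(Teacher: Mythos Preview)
Your proposal is correct and follows essentially the same approach as the paper's own proof, which simply refers back to the argument of Proposition~\ref{proofO+1} with $3^n+1$ in place of $3^n-1$. Your added justification for why $T(\FF P^\kappa)\subseteq U^{'\kappa}_{3^{n-1}}$ via the eigenvalue $a$ is a reasonable way to unpack what the paper calls ``easy to see,'' though the cleanest check is the direct computation $a-3^{n-2}=3^{n-2}(3^n+1)/2\equiv 0\pmod\ell$.
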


\begin{proof} First we consider $\ell\nmid (3^n+1)$. As in
$\S3$, $\FF P\cong2\FF\oplus X\oplus Y\oplus 2Z,$ where
$X:=U^{'+1}_{-3^{n-2}}$, $Y:=U^{'-1}_{-3^{n-2}}$,
$Z:=U^{'+1}_{3^{n-1}}\cong U^{'-1}_{3^{n-1}}$ and $X,Y,Z$ are simple
by Proposition~\ref{mainO+}.

Second we consider $\ell\mid (3^n+1)$. We have $\FF
P^\kappa=U^{'\kappa}_{-3^{n-2}}\oplus U^\kappa_{3^{n-1}}$, where
$U^{'\kappa}_{-3^{n-2}}$ is simple and $U^\kappa_{3^{n-1}}$ is
uniserial with composition series $0\subset T(\FF P^\kappa) \subset
U^{'\kappa}_{3^{n-1}}\subset U^\kappa_{3^{n-1}}$. Putting
$X:=U^{'+1}_{-3^{n-2}}$, $Y:=U^{'-1}_{-3^{n-2}}$, and
$Z:=U^{'\kappa}_{3^{n-1}}/T(\FF P^\kappa)$, we obtain $\FF P\cong
X\oplus Y\oplus 2(\FF-Z-\FF)$, as stated.
\end{proof}

For the rest of this section, we consider the case $\ell=2$. As
in~$\S$\ref{sectionO+}, we have $\rho^i=1+\varphi^i+\psi$ for
$i=0,\pm1$, where $\varphi^i,\psi\in\Irr(G)$,
$\psi(1)=(3^{2n}-9)/3$, and
$\varphi^{+1}(1)=\varphi^{-1}(1)=(3^n+1)(3^{n-1}+1)/8$. From
Corollary~8.10 of~\cite{ST},
$\overline{\psi}=1+\beta(W)+\beta(X)+\beta(Y)$ when $n$ is even and
$\overline{\psi}=1+1+\beta(W)+\beta(X)+\beta(Y)$ when $n$ is odd.
Here, $X$, $Y$, and $W$ are simple $\FF G$-modules of dimensions
$(3^n+1)(3^{n-1}+1)/8-1$, $(3^n+1)(3^{n-1}+1)/8-1$, and
$(3^n+1)(3^{n-1}-3)/8-1+\delta_{2,n}$, respectively. Moreover,
$X\ncong Y$ and $W$ has smallest dimension among simple $\FF
G$-modules of dimensions greater than 1.

\begin{lemma}\label{lemmaO-3} With the above notation,
\begin{enumerate}
\item[(i)] For $\kappa=\pm1$, $U^{'\kappa}$ is
isomorphic to either $X$ or $Y$ and
$\overline{\varphi^\kappa}=1+\beta(U^{'\kappa})$.
\item[(ii)] If we let $U^{'+1}\cong X$, then $U^{'-1}\cong
Y$.
\end{enumerate}
\end{lemma}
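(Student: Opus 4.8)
The plan is to mimic closely the structure of the argument already carried out for $O^+_{2n}(3)$ in the proof of Proposition~\ref{proofO+2}, adapting it to the minus-type parameters. For part~(i): by Lemma~\ref{lemmaO+1}(ii) the graph submodule $U^{'\kappa}$ is self-dual and appears at least twice as a composition factor of $\FF P^\kappa$, and by Proposition~\ref{mainO+} it is simple. Now $\overline{\rho^\kappa}=\overline{1+\varphi^\kappa+\psi}$, and $\overline{\psi}$ is given by Corollary~8.10 of~\cite{ST} as $1+\beta(W)+\beta(X)+\beta(Y)$ (or with an extra trivial constituent when $n$ is odd). Since $\varphi^\kappa(1)=(3^n+1)(3^{n-1}+1)/8=\dim X+1=\dim Y+1$, and since $W$ is by hypothesis the smallest nontrivial simple module, the only way to write $\overline{\varphi^\kappa}$ as a Brauer character of the correct degree using constituents available from $\overline{\psi}$ and the trivial module is $\overline{\varphi^\kappa}=1+\beta(X)$ or $1+\beta(Y)$; in particular $U^{'\kappa}\cong X$ or $U^{'\kappa}\cong Y$, since $U^{'\kappa}$ is a simple submodule of $\FF P^\kappa$ of the right dimension and, being a composition factor of $\overline{\varphi^\kappa}$ (as $U^{'\kappa}$ is not trivial by Lemma~\ref{lemmaO+1}(i) while $\overline{\psi}$'s nontrivial constituents are $X,Y,W$ with $W$ too big), it must be $X$ or $Y$. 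This forces $\overline{\varphi^\kappa}=1+\beta(U^{'\kappa})$.

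For part~(ii), I would argue by contradiction exactly as in Proposition~\ref{proofO+2}. Suppose $U^{'+1}\cong U^{'-1}\cong X$; then $\overline{\varphi^{+1}}=\overline{\varphi^{-1}}$, hence $\overline{\rho^{+1}}=\overline{\rho^{-1}}$ as Brauer characters of $G=O^-_{2n}(3)$. Restricting to a suitable subgroup $G_{n-1}$ of type $O^-_{2(n-1)}(3)$ and using a decomposition of $\FF P_n^\kappa$ as a direct sum of copies of $\FF P_{n-1}^{\pm1}$, $\FF P_{n-1}^0$, and trivial modules (analogous to the identity $\FF P_n^\kappa\cong 5\FF P_{n-1}^\kappa\oplus 2\FF P^{-\kappa}_{n-1}\oplus 2\FF P_{n-1}^0\oplus 2\FF$ used in the $O^+$ case), one deduces $\overline{\rho_{n-1}^{+1}}=\overline{\rho_{n-1}^{-1}}$. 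Iterating this downward induction reaches $\overline{\varphi_3^{+1}}=\overline{\varphi_3^{-1}}$ for $O^-_6(3)$, which is refuted by direct inspection of the complex and $2$-Brauer character tables of $O^-_6(3)\cong U_4(2).2$ in~\cite{Atl1,Atl2}. Since $U^{'+1}\not\cong U^{'-1}$ and both are in $\{X,Y\}$ by part~(i), choosing the labelling $U^{'+1}\cong X$ gives $U^{'-1}\cong Y$.

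The main obstacle I anticipate is pinning down the correct branching rule for $\FF P_n^\kappa$ restricted to the stabilizer of a nondegenerate $2$-dimensional subspace (yielding an $O^-_{2(n-1)}(3)$ factor), since the minus-type geometry is less symmetric than the plus-type one handled in~\S\ref{sectionO+}; one must carefully count how points of $P_n^{+1}$ and $P_n^{-1}$ distribute over the orbits of $G_{n-1}$, and the multiplicities may differ from the plus case. A secondary technical point is verifying that the extra trivial constituent in $\overline{\psi}$ when $n$ is odd does not create an alternative decomposition of $\overline{\varphi^\kappa}$ — but this is ruled out on dimension grounds together with the fact that $U^{'\kappa}$ is nontrivial and simple of dimension exactly $(3^n+1)(3^{n-1}+1)/8-1$, matching only $X$ or $Y$. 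Everything else is bookkeeping parallel to the already-completed $O^+_{2n}(3)$ case.
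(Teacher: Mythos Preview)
Your argument for part~(i) has a genuine gap. You assert that ``$W$ is too big'' and that $U^{'\kappa}$ has ``the right dimension'' $(3^n+1)(3^{n-1}+1)/8-1$, but neither claim is justified. In fact $W$ is the \emph{smallest} nontrivial simple $\FF G$-module, not the largest, and in characteristic~$2$ the dimension formula~(\ref{dimension}) does not apply since $c_1=c_2$; so at this stage $\dim U^{'\kappa}$ is unknown. Your multiplicity argument correctly shows $U^{'\kappa}\in\{X,Y,W\}$, but nothing you wrote excludes $U^{'\kappa}\cong W$. In that case one would have $\overline{\varphi^\kappa}=\beta(W)+m\cdot 1$ with $m=(3^n+1)/2+1-\delta_{2,n}$, since $\varphi^\kappa(1)-\dim W<\dim W$ forces the remaining constituents to be linear, hence trivial. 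This is a perfectly consistent decomposition on degree grounds alone, so your ``only way to write $\overline{\varphi^\kappa}$'' claim fails.

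The paper eliminates $U^{'\kappa}\cong W$ by restricting $\FF P^{+1}$ to $G_1=O^+_{2n-2}(3)$ (the stabilizer of the hyperbolic subspace $\langle e_1,f_1,\dots,e_{n-1},f_{n-1}\rangle$), obtaining $\FF P^{+1}|_{G_1}\cong 2\FF\oplus\FF P_1^{+1}\oplus 4\FF P_1^{-1}\oplus 4\FF P_1^0$. Counting trivial composition factors using the already-determined structures of $\FF P_1^0$ (from~\cite{ST}) and $\FF P_1^{\pm1}$ (from Table~1) yields exactly~$15$, far fewer than the $m+2$ or $m+3$ that $U^{'\kappa}\cong W$ would force. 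This is the missing idea.

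For part~(ii) your strategy is sound in principle but unnecessarily laborious. The paper does \emph{not} restrict to an $O^-_{2(n-1)}(3)$ subgroup and induct downward; instead it reuses the same restriction to $G_1=O^+_{2n-2}(3)$ from part~(i). From $\overline{\rho^{+1}}=\overline{\rho^{-1}}$ and the two branching formulae one immediately gets that the mod~$2$ permutation characters of $\FF P_1^{+1}$ and $\FF P_1^{-1}$ agree for $O^+_{2n-2}(3)$, contradicting Proposition~\ref{proofO+2} directly---no induction, no new branching rule, no base-case computation in~\cite{Atl1,Atl2}. Your anticipated obstacle of working out the minus-type branching is thus avoided entirely by passing to a plus-type subspace.
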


\begin{proof}
(i) We only give here the proof for $n$ even and $\kappa=1$. Other
cases are similar.

Assume the contrary: $U^{'+1}$ is not isomorphic to both $X$ and
$Y$. Lemma~\ref{lemmaO+1} then implies that $U^{'+1}\cong W$ and
$\beta(W)$ is a constituent of $\overline{\varphi^{+1}}$. Hence all
other constituents of $\overline{\varphi^{+1}}$ have degrees at most
$\varphi^{+1}(1)-\dim W=(3^n+1)/2$, which imply that they are linear
since $(3^n+1)(3^{n-1}-3)/8-1+\delta_{2,n}$ is the smallest
dimension of nonlinear irreducible $2$-Brauer characters of $G$.

Let $P^0_1$, $P^{+1}_1$, and $P^{-1}_1$ be the sets of singular
points, plus points, and minus points, respectively, in
$V_1:=\langle e_1,f_1,..., e_{n-1}, f_{n-1}\rangle$. Note that $V_1$
equipped with $Q$ is an orthogonal space of type $+$. Let
$G_1:=O^+_{2n-2}(3)\leq G$. Then we obtain an $\FF G_1$-isomorphism:
\begin{equation}\label{isomorphism} \FF
P^{+1}\cong 2\FF\oplus \FF P_{1}^{+1}\oplus 4\FF P_{1}^{-1}\oplus
4\FF P_1^0.
\end{equation}
Inspecting the structures of $\FF P_1^0$ in Figure~5 of~\cite{ST}
and of $\FF P_1^{+1}$ as well as $\FF P_1^{-1}$ in
Table~\ref{tableO+}, we see that $\FF P^{+1}$, when considered as
$\FF G_1$-module, has 15 composition factors (counting
multiplicities) of dimension 1 (actually all of them are isomorphic
to $\FF$). This contradicts the conclusion of the previous
paragraph.

We have shown that $U^{'+1}$ is isomorphic to either $X$ or $Y$.
Notice from Lemma~\ref{lemmaO+1} that $U^{'+1}$ appears at least
twice as a composition factor of $\FF P^{+1}$. The second statement
of~(i) now follows by comparing the degrees and using
(\ref{isomorphism}).

(ii) Assuming the contrary that $U^{'-1}\ncong Y$, then
$U^{'+1}\cong U^{'-1}\cong X$. It follows that
$\overline{\varphi^{+1}}=\overline{\varphi^{-1}}$ and hence
$\overline{\rho^{+1}}=\overline{\rho^{-1}}$. Therefore, the
isomorphism~(\ref{isomorphism}) together with $\FF P^{-1}\cong
2\FF\oplus \FF P_1^{-1}\oplus 4\FF P_1^{+1}\oplus 4\FF P_1^0$ imply
that the modulo 2 permutation characters afforded by $\FF P_1^{+1}$
and $\FF P_1^{-1}$ are the same. This is a contradiction as seen in
the proof of Proposition~\ref{proofO+2}.
\end{proof}

Since we will use an induction argument to determine the structure
of $\FF P$, we temporarily add the subscript $n$ to our standard
notations. Notice that $G_2=O_4^-(3)$ acts with rank 3 on both
$P_2^{+1}$ and $P_2^{-1}$. Everything we have proved for $n\geq 3$
works exactly the same in the case $n=2$ except that $W_2=0$.

\begin{lemma}\label{lemmaO-1} The structures of $\FF P_2^{+1}$ and $\FF P_2^{-1}$ are given as follows:
\footnotesize
$$\FF P_2^{+1}:\qquad
\xymatrix@C=8pt@R=8pt{
&&X_2\ar@{-}[d]\\
&&\FF\ar@{-}[d]\\
\FF&\oplus&Y_2\ar@{-}[d]\\
&&\FF\ar@{-}[d]\\
&&X_2}
\qquad\qquad\qquad
\FF P_2^{-1}:\qquad
\xymatrix@C=8pt@R=8pt{
&&Y_2\ar@{-}[d]\\
&&\FF\ar@{-}[d]\\
\FF&\oplus&X_2\ar@{-}[d]\\
&&\FF\ar@{-}[d]\\
&&Y_2}$$
\end{lemma}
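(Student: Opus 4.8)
The plan is to treat $n=2$ as a base case that is small enough to settle by essentially the same machinery already developed for general $n$, but where the extra simplification $W_2=0$ makes the answer pin down completely. First I would record the relevant numerical data for $G_2=O_4^-(3)$: compute $|P_2^\kappa|=3(3^2+1)/2=15$, the rank $3$ parameters $a,b,r,s$ from the formulas of this section (with $n=2$), and verify that equation~(\ref{quadratic}) still has the two coincident roots modulo $2$, so that $\FF P_2^\kappa$ has a unique graph submodule $U^{'\kappa}_2$. Since $W_2=0$, Corollary~8.10 of~\cite{ST} gives $\overline{\psi_2}=1+\beta(X_2)+\beta(Y_2)$ for the even case $n=2$, with $\dim X_2=\dim Y_2=(3^2+1)(3+1)/8-1=4$ and $X_2\ncong Y_2$; and $\overline{\varphi^\kappa_2}=1+\beta(U^{'\kappa}_2)$ by Lemma~\ref{lemmaO-3}(i), with $U^{'+1}_2\cong X_2$ and $U^{'-1}_2\cong Y_2$ by Lemma~\ref{lemmaO-3}(ii). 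Thus the composition factors of $\FF P_2^{+1}$, counted with multiplicity, are $\FF,\FF,\FF,X_2,X_2,Y_2$ (total dimension $3+4+4+4=15$, which checks), and symmetrically for $\FF P_2^{-1}$.

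Next I would determine how these factors are glued. Since $|P_2^\kappa|=15\neq0$ in $\FF$ (characteristic $2$), we have $\FF P_2^\kappa=T(\FF P_2^\kappa)\oplus S(\FF P_2^\kappa)$, so one trivial factor splits off and it remains to describe the $14$-dimensional self-dual module $S(\FF P_2^\kappa)$ with factors $\FF,\FF,X_2,X_2,Y_2$ (for $\kappa=+1$). By Lemma~\ref{lemmaO+1}(i), $U^{'+1}_2\cong X_2$ is simple and equals the socle of $\FF P_2^{+1}$; by Lemma~\ref{lemmaO+1}(ii) it is also a quotient (it appears in the head), and $X_2$ occurs exactly twice as a factor. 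So the submodule lattice of $S(\FF P_2^{+1})$ must have $X_2$ at the bottom and $X_2$ at the top, with a heart of composition length $3$ built from $\FF,\FF,Y_2$. Proposition~\ref{mainO+} (minimality: every nonzero submodule of $\FF P_2^\kappa$ is $T(\FF P_2^\kappa)$ or contains a graph submodule) forbids any further trivial submodule sitting above the bottom $X_2$, which rules out the heart being a direct sum $\FF\oplus\FF\oplus Y_2$ or having $\FF$ as a submodule of the heart; combined with self-duality this forces the heart itself to be uniserial $\FF-Y_2-\FF$, giving the claimed uniserial structure $X_2-\FF-Y_2-\FF-X_2$ for $S(\FF P_2^{+1})$ and hence the displayed picture for $\FF P_2^{+1}$. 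The argument for $\FF P_2^{-1}$ is identical with $X_2$ and $Y_2$ interchanged.

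The step I expect to be the main obstacle is pinning down that the heart is genuinely \emph{uniserial} $\FF-Y_2-\FF$ rather than, say, $\FF\oplus(\text{a uniserial }\FF-Y_2)$ or some other self-dual length-$3$ configuration. For this I would lean on Lemma~\ref{main1} applied to the maps $Q_{0,+1}$, $Q_{+1,+1}$, etc., together with the known structure of $\FF P_2^0$ (from Figure~5 of~\cite{ST}, applied to $O_4^-(3)$), to control which of $\FF$ and $Y_2$ can be a submodule of the relevant subquotient $U^{'+1\perp}_2/U^{'+1}_2$; exactly as in the inspection carried out in the proof of Proposition~\ref{proofO+2}, one sees a trivial factor cannot be a submodule there. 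Alternatively, and as a safety net, for the single small group $O_4^-(3)$ one can simply invoke the complex and $2$-Brauer character tables (\cite{Atl1,Atl2}) together with a direct computation of the permutation character to confirm the decomposition matrix entries, which determines the multiplicities, and then the minimality proposition plus self-duality leave the uniserial shape as the only possibility. Either route closes the lemma.
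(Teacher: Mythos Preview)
There is a genuine gap in the key step. You claim that Proposition~\ref{mainO+} ``forbids any further trivial submodule sitting above the bottom $X_2$'' and thereby rules out the heart $Q=(U^{'+1}_2)^\perp\cap S(\FF P_2^{+1})/U^{'+1}_2$ having $\FF$ as a submodule. But Proposition~\ref{mainO+} only says that every nonzero submodule of $\FF P_2^{+1}$ is $T(\FF P_2^{+1})$ or contains the graph submodule $X_2$; it controls the socle of $\FF P_2^{+1}$, not what can appear in the socle of a proper \emph{quotient}. If the heart were $\FF\oplus Y_2\oplus\FF$ or $(\FF-\FF)\oplus Y_2$, every nonzero submodule of $S(\FF P_2^{+1})$ would still contain $X_2$, so minimality is perfectly happy with those configurations. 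Thus minimality plus self-duality leave three candidates for $Q$, not one, and your backup via character tables suffers from exactly the same defect: decomposition numbers give multiplicities, not the extension pattern inside the heart.

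Your other backup, pushing information through $Q_{0,+1}$ from the known structure of $\FF P_2^0$, is not obviously doomed, but you have not executed it and at $n=2$ it degenerates: here $W_2=0$, so the argument used later in Proposition~\ref{proofO-2} (which for $n\ge3$ produces a submodule $X-W$ from a quotient of $S(\FF P^0)$) yields only the image $X_2$ and gives no information about the heart. The paper instead sidesteps this by exploiting the coincidence $O_4^-(3)\cong 2\times\mathrm{Sym}(6)$: it identifies $\FF P_2^\kappa$ with the permutation module $M^{(4,2)}$ for $\mathrm{Sym}(6)$ on unordered pairs, observes that the Specht submodule $S^{(4,2)}$ has dimension $9$ and hence codimension~$1$ in $M$, and then uses James's theorem that a Specht module has a \emph{unique} maximal submodule. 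If $Q$ were $\FF\oplus Y_2\oplus\FF$ or $(\FF-\FF)\oplus Y_2$, the image of $S^{(4,2)}$ in $Q$ would be $\FF\oplus U'^{-\kappa}$, giving $S^{(4,2)}$ two maximal submodules, a contradiction. That extra structural input is what actually eliminates the non-uniserial cases.
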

\normalsize

\begin{proof} By Lemma \ref{lemmaO+1} the module $U'^\kappa$ is simple and self-dual.
The module $\FF P_2^\kappa$ has dimension $15= 3^{2-1}(3^2+1)/2$,
and by Lemma \ref{lemmaO-3} its submodule $U'^\kappa$ has dimension
$4=(3^2+1)(3^{2-1}+1)/8-1$. Therefore by Lemmas \ref{lemmaO+1} and
\ref{main1} the module $U'^\kappa$ appears twice as a composition
factor of $\FF P_2^\kappa$ and once as a composition factor of $\FF
P_2^{-\kappa}$.

As $15$ is odd, $\FF P_2^\kappa = T(\FF P_2^\kappa) \oplus S(\FF
P_2^\kappa)$. Here $U'^\kappa$ is the unique minimal submodule of
the dimension $14$ module $S(\FF P_2^\kappa)$, and
$M=(U'^\kappa)^\perp \cap S(\FF P_2^\kappa)$ its unique maximal
submodule. The submodule $M$ has dimension $10$ and quotient $S(\FF
P_2^\kappa)/M$ isomorphic to $U'^\kappa$. The quotient
$Q=M/U'^\kappa$ is thus self-dual of dimension $6$, possessing two
trivial composition factors in addition to the factor
$U'^{-\kappa}$. There are only three possibilities:
\[
Q = \FF - U'^{-\kappa} - \FF \quad\text{or}\quad Q = \FF \oplus
U'^{-\kappa} \oplus \FF\quad\text{or}\quad Q=(\FF-\FF)\oplus
U'^{-\kappa}.
\]
The first gives the lemma, so we must eliminate the second and
third.

The usual dot product on the natural $\FF_3$-permutation module for
$Sym(6)$ is an invariant bilinear form with radical spanned by the
vector of $1$'s. The action of $Sym(6)$ on the unique nontrivial
composition factor thus gives an injection of $Sym(6)$ into
$O_4^-(3)$. More specifically, $O_4^-(3)\cong 2\times Sym(6)$. With
this in mind, we can choose notation so that the module $\FF
P_2^\kappa$ is the usual permutation module $M^{(4,2)}$ for $Sym(6)$
acting on the $15$ unordered pairs from a set of size six.

As the representation theory of symmetric groups is highly developed
(see, for instance, the elegant treatment in James's book
\cite{james}), the lemma is presumably well known. Indeed the needed
calculations can be done easily, following Example 5.2 of
\cite{james}. We give a short proof, using only some of the
elementary theory.

By an easy calculation and \cite[Cor.~8.5]{james} the Specht
submodule $S=S^{(4,2)}$ of $\FF P_2^\kappa=M^{(4,2)}$ has dimension
$9$, so it must have codimension $1$ in $M$ with $S/U'^\kappa$ of
dimension $5$ in $Q$. Assume (for a contradiction) that $Q = \FF
\oplus U'^{-\kappa} \oplus \FF$ or $Q=(\FF-\FF)\oplus U'^{-\kappa}$.
Then $S/U'^\kappa$ must have shape $\FF \oplus U'^{-\kappa}$. In
particular, the Specht module $S$ has two maximal submodules, one of
codimension $1$ and the other of codimension $4$. But by
\cite[Theorem~4.9]{james}, Specht modules have unique maximal
submodules. This contradiction proves the lemma.
\end{proof}

\begin{lemma}\label{lemmaO-2} For any $n\geq2$, $\FF P_n^{+1}$ does not have any submodule of structure $X_n-Y_n$. Similarly,
$\FF P_n^{-1}$ does not have any submodule of structure $Y_n-X_n$.
\end{lemma}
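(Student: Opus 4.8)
The plan is to argue by downward induction on $n$, using the base case $n=2$ that follows immediately from the explicit structure of $\FF P_2^{+1}$ and $\FF P_2^{-1}$ described in Lemma~\ref{lemmaO-1}: in each of those two modules the composition factors $X_2$ and $Y_2$ sit at the ``top and bottom'' of a rigid uniserial-looking configuration with two trivial factors strictly between them, so there is no subquotient (in particular no submodule) of the form $X_2-Y_2$ (respectively $Y_2-X_2$). For the inductive step, suppose the statement holds for $n-1$ and assume, for contradiction, that $\FF P_n^{+1}$ has a submodule $N$ with head $Y_n$ and socle $X_n$ (the $P_n^{-1}$ case being symmetric under the diagonal automorphism swapping $\kappa=+1$ and $\kappa=-1$, as recorded after Theorem~\ref{theorem}). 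Restrict to the subgroup $G_{n-1}=O^+_{2n-2}(3)\le G_n$ and use the branching isomorphism~(\ref{isomorphism}), namely $\FF P_n^{+1}\cong 2\FF\oplus \FF P_{n-1}^{+1}\oplus 4\FF P_{n-1}^{-1}\oplus 4\FF P_{n-1}^0$ as $\FF G_{n-1}$-modules. The goal is to show that a hypothetical $X_n-Y_n$ submodule would restrict to something forbidden on $G_{n-1}$.

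The key computational input is the restriction of the simple modules $X_n$ and $Y_n$ to $G_{n-1}$: using the known composition-factor content of $\FF P_{n-1}^0$ (from~\cite{ST}) and of $\FF P_{n-1}^{\pm1}$ (from the already-established $O^+_{2n-2}(3)$ case, i.e.\ Table~\ref{tableO+} together with Propositions~\ref{proofO+1} and~\ref{proofO+2}), and the dimension formulas $\dim X_n=\dim Y_n=(3^n+1)(3^{n-1}+1)/8-\delta_{\ell,2}$, I would identify the Brauer characters $\beta(X_n)|_{G_{n-1}}$ and $\beta(Y_n)|_{G_{n-1}}$ as sums of the simple $\FF G_{n-1}$-modules $X_{n-1}$, $Y_{n-1}$, $W_{n-1}$, and trivials, with known multiplicities. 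The point is that $X_{n-1}$ occurs in one of the two and $Y_{n-1}$ in the other with differing multiplicities (or: $X_{n-1}$ appears in $X_n|_{G_{n-1}}$ but a copy of it ``on top'' of a copy of $Y_{n-1}$ can only arise from an $X_{n-1}-Y_{n-1}$ section somewhere in the restricted module). A uniserial section $X_n-Y_n$ of $\FF P_n^{+1}$, restricted to $G_{n-1}$, would force a non-split extension between a constituent of $X_n|_{G_{n-1}}$ lying below a constituent of $Y_n|_{G_{n-1}}$; tracking which summand of $2\FF\oplus \FF P_{n-1}^{+1}\oplus 4\FF P_{n-1}^{-1}\oplus 4\FF P_{n-1}^0$ such an extension must live in, and comparing with the explicitly known socle series of each of those summands, I expect to land on a submodule of the form $X_{n-1}-Y_{n-1}$ inside $\FF P_{n-1}^{+1}$ (or $Y_{n-1}-X_{n-1}$ inside $\FF P_{n-1}^{-1}$), contradicting the inductive hypothesis.

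The hard part will be the bookkeeping in the inductive step: making precise the claim that a non-split $X_n$-below-$Y_n$ extension \emph{must} restrict to a non-split extension supported on a single indecomposable $\FF G_{n-1}$-summand rather than dissolving into a direct sum of several benign pieces across the summands. To handle this cleanly I would isolate the relevant constituents using that $W_{n-1}$ (hence also $W_n$) has the smallest dimension among nontrivial simple $\FF G$-modules exceeding $1$ — so the only simple modules of the ``small'' dimension $(3^{n-1}+1)(3^{n-2}+1)/8-1$ appearing in these restrictions are $X_{n-1}$ and $Y_{n-1}$ — and that $X_{n-1}\ncong Y_{n-1}$; this pins down where copies of $X_{n-1}$ and $Y_{n-1}$ can sit. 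One then checks, summand by summand, that in $\FF P_{n-1}^{+1}$, $\FF P_{n-1}^{-1}$, and $\FF P_{n-1}^0$ (all of whose submodule lattices are now known) no $X_{n-1}$ lies immediately below a $Y_{n-1}$; since Ext groups between these simples cannot be ``created'' by the direct-sum decomposition, the hypothetical section is impossible. A separate small check is needed when $n-1=2$, where $W_2=0$, but there Lemma~\ref{lemmaO-1} already gives the verdict directly.
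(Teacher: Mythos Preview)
Your inductive step has a genuine gap. The branching isomorphism~(\ref{isomorphism}) you invoke is an isomorphism of $\FF G_1$-modules with $G_1=O^+_{2n-2}(3)$, so the summands $\FF P_{n-1}^{\pm1}$ and $\FF P_{n-1}^0$ on its right-hand side are permutation modules for the type~$+$ group, and the simples $X_{n-1},Y_{n-1},W_{n-1}$ you would be tracking are those of Table~1, not of Table~2. Landing on an $X_{n-1}-Y_{n-1}$ submodule there does not contradict the inductive hypothesis, which concerns $O^-_{2n-2}(3)$. Worse, the final check you propose simply fails: inspecting Table~1 for $\ell=2$ (either parity) shows that $\FF P^{+1}$ for $O^+$ \emph{does} contain a submodule with socle $X$ and head $Y$ (take the preimage of the $Y$ in the second Loewy layer). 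And the ``hard part'' you flag---that a non-split $X_n-Y_n$ must restrict to something non-split supported on a single summand---is not bookkeeping but the whole content of the lemma: plain restriction to a non-normal subgroup gives no control over splitting.

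The paper instead passes to the parabolic $Q=O\cdot L$ fixing the singular point $\langle e_1\rangle$, with $O=O_3(Q)$ and Levi containing $G_{n-1}=O^-_{2n-2}(3)$. Since $|O|$ is a $3$-power and $\ell=2$, taking $O$-fixed points is exact, and an orbit count gives $C_{\FF P_n^{+1}}(O)\cong\FF P_{n-1}^{+1}\oplus\FF$ as $\FF G_{n-1}$-modules---now genuinely within the $O^-$ family. A short character argument using~(\ref{O-2}) then yields $C_{X_n}(O)\cong X_{n-1}$ and $C_{Y_n}(O)\cong Y_{n-1}$. If $\FF P_n^{+1}$ had a submodule $N$ of shape $X_n-Y_n$, exactness would make $C_N(O)$ an extension of $Y_{n-1}$ by $X_{n-1}$ inside $\FF P_{n-1}^{+1}\oplus\FF$; since $Y_{n-1}$ does not occur in the socle of $\FF P_{n-1}^{+1}$ (Proposition~\ref{mainO+} and Lemma~\ref{lemmaO+1}), this extension is non-split, giving an $X_{n-1}-Y_{n-1}$ submodule of $\FF P_{n-1}^{+1}$ and a legitimate contradiction of the inductive hypothesis.
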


\begin{proof} Case $n=2$ is clear from Lemma~\ref{lemmaO-1}. So we assume that
$n\geq3$. Let $Q$ be the parabolic subgroup of $G_n$ fixing $\langle
e_1\rangle$. Then $Q=O.L$ where $O=O_3(Q)$, the maximal normal
$3$-subgroup of $Q$ and $L\cong G_{n-1}\times \mathbb{Z}_2$, a Levi
subgroup of $G_n$. Set $V_1:=\langle
e_2,...,e_n,f_2,...,f_n\rangle$. Let $P_1$ be the set of plus points
in $V$ of the form $\langle xe_1+ u\rangle$ and $P_2$ the set of
plus points in $V$ of the form $\langle f_1+xe_1+u\rangle$ with
$x\in \FF_3$ and $u\in V_1$. Then $P_n^{+1}$ is the disjoint union
of $P_1$ and $P_2$.

It is clear that $|P_2|=|O|=3^{2n-2}$ and the stabilizer of $\langle
f_1+e_1\rangle$ in $O$ is trivial. Therefore $O$ acts transitively
on $P_2$. This $O$-orbit is fixed under the action of $G_{n-1}$ on
the set of $O$-orbits on $P^{+1}$. For any plus point $\langle
u\rangle$ in $V_1$, the $O$-orbit of $\langle u\rangle$ consists of
three points: $\langle u\rangle$, $\langle u+e_1\rangle$, and
$\langle u-e_1\rangle$. Hence the action of $G_{n-1}$ on the set of
$O$-orbits in $P_1$ is equivalent to that on the set of plus points
in $V_1$. We have proved the following $\CC G_{n-1}$-isomorphism:
\begin{equation}\label{O-1} C_{\CC P^{+1}_n}(O)\cong \CC
P^{+1}_{n-1}\oplus \CC,
\end{equation} where $C_{\CC
P^{+1}_n}(O)$ is the centralizer of $O$ in $\CC P^{+1}_n$. If $\chi$
is the character of $G_n$ afforded by a module $M$, we denote by
$C_\chi(O)$ the character of $G_{n-1}$ afforded by $C_M(O)$. The
isomorphism (\ref{O-1}) then implies
\begin{equation}\label{O-2}C_{\varphi^{+1}_n}(O)+C_{\psi_n}(O)=
\varphi^{+1}_{n-1}+\psi_{n-1}+1_{G_{n-1}}.\end{equation} From
Frobenius reciprocity,
$$(\psi_n|_Q,1_Q)_Q=(\psi_n,1_Q^{G_n})_{G_n}=(\psi_n,\rho^0_n)_{G_n}>0.$$
Hence, $1_Q$ is a constituent of $\psi_n|_Q$. It follows that
$1_{G_{n-1}}$ is a constituent of $C_{\psi_n}(O)$.

Now we will show that $\psi_{n-1}$ is also a constituent of
$C_{\psi_n}(O)$. Assume not. Then $\psi_{n-1}$ would be a
constituent of $C_{\varphi^{+1}_n}(O)$ by (\ref{O-2}). It follows
that $\overline{\psi_{n-1}}$ is contained in
$C_{\overline{\varphi^{+1}_n}}(O)$. As $\overline{\varphi^{+1}_n}$
is always contained in $\overline{\psi_n}$, we find that
$2\overline{\psi_{n-1}}$ is contained in
$C_{\overline{\varphi^{+1}_n}}(O)+C_{\overline{\psi_n}}(O)$. The
formula~(\ref{O-2}) then implies that $\overline{\psi_{n-1}}$ is
contained in $\overline{\varphi^{+1}_{n-1}}+1$, a contradiction.

We have shown that both $1_{G_{n-1}}$ and $\psi_{n-1}$ are
constituents of $C_{\psi_n}(O)$. Therefore
$C_{\varphi^{+1}_n}(O)=\varphi^{+1}_{n-1}$. It follows by
Lemma~\ref{lemmaO-3} that $C_{X_n}(O)\cong X_{n-1}$. Similarly,
$C_{Y_n}(O)\cong Y_{n-1}$.

Now we prove the lemma by induction. Assuming that the lemma is true
for $n-1$ and supposing the contrary that $\FF P^{+1}_n$ has a
submodule of structure $X_n-Y_n$. Proposition~\ref{mainO+} and the
previous paragraph then show that $C_{(X_n-Y_n)}(O)\cong
X_{n-1}-Y_{n-1}$ is a submodule of $C_{\FF P^{+1}_n}(O)\cong \FF
P^{+1}_{n-1}\oplus \FF$. We deduce that $X_{n-1}-Y_{n-1}$ is a
submodule of $\FF P^{+1}_{n-1}$, contradicting the induction
hypothesis.
\end{proof}

\begin{proposition}\label{proofO-2}
Theorem~\ref{theorem} holds when $G=O^-_{2n}(3),n\geq3$ and
$\ell=2$.
\end{proposition}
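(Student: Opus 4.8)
The plan is to determine $\FF P=\FF P^{+1}\oplus\FF P^{-1}$ exactly as was done for the $+$-type groups in Proposition~\ref{proofO+2}, now feeding in Lemmas~\ref{lemmaO-3}, \ref{lemmaO-1}, and~\ref{lemmaO-2}. First I would record the composition factors of $\FF P^\kappa$: from $\overline{\rho^\kappa}=1+\overline{\varphi^\kappa}+\overline{\psi}$ with $\overline{\varphi^\kappa}=1+\beta(U^{'\kappa})$ (Lemma~\ref{lemmaO-3}(i)), with $\overline{\psi}$ as displayed just before Lemma~\ref{lemmaO-3}, and with $U^{'+1}\cong X$, $U^{'-1}\cong Y$ (Lemma~\ref{lemmaO-3}(ii)), one sees that $\FF P^{+1}$ has composition factors $X$ (twice), $Y$, $W$ and three or four copies of $\FF$ according as $n$ is even or odd, with the analogous statement for $\FF P^{-1}$ obtained by interchanging $X$ and $Y$. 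Since $|P^\kappa|=3^{n-1}(3^n+1)/2$ is odd for $n$ even and even for $n$ odd, the trivial summand behaves differently in the two cases: for $n$ even, $\FF P^\kappa=T(\FF P^\kappa)\oplus S(\FF P^\kappa)$, whereas for $n$ odd, $T(\FF P^\kappa)\subset S(\FF P^\kappa)=T(\FF P^\kappa)^\perp$. I would therefore split the proof according to the parity of $n$, treating $n$ even first.

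In either case, Proposition~\ref{mainO+} shows that $U^{'\kappa}$ is the unique minimal nontrivial submodule of $\FF P^\kappa$, so Lemma~\ref{lemmaO+1}(i) gives $\soc(\FF P^\kappa)=T(\FF P^\kappa)\oplus U^{'\kappa}$, and by self-duality $U^{'\kappa}$ also occurs in the head; this pins $X$ (resp.\ $Y$) to both ends of the relevant self-dual layer. The real work is to describe the self-dual section $(U^\kappa)^\perp/U^{'\kappa}$ (together with $S(\FF P^\kappa)/T(\FF P^\kappa)$ in the $n$ odd case), whose composition factors are $W$, $Y$ (resp.\ $X$), and two or three trivials. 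To do this I would again use Proposition~\ref{mainO+} and self-duality to limit the possibilities, and then — just as for $O^+_{2n}(3)$ — apply the homomorphisms $Q_{i,j}$ of Section~\ref{relations} via Lemma~\ref{main1}, comparing with the known submodule lattice of $\FF P^0$ from~\cite{L2,ST}, to determine which of $W$, $Y$ (resp.\ $X$), or $\FF$ actually occurs as a submodule of that section. When two self-dual configurations survive with the same composition multiset, I would eliminate the wrong one with Lemma~\ref{lemmaO-2}, which forbids a submodule of shape $X_n-Y_n$ in $\FF P^{+1}_n$ and of shape $Y_n-X_n$ in $\FF P^{-1}_n$; the induction on $n$ behind Lemma~\ref{lemmaO-2} is anchored at $n=2$ by Lemma~\ref{lemmaO-1}, and where a further inductive step is convenient I would restrict along~(\ref{isomorphism}) to $G_1=O^+_{2n-2}(3)$ and invoke the $+$-type case already settled in Proposition~\ref{proofO+2}.

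The step I expect to be hardest is this last one: distinguishing self-dual submodule lattices that carry identical composition factors. Concretely, the difficulty is to locate the factor $W$ and the trivial factors precisely, relative to the two copies of $X$ (resp.\ the single $Y$) in $\FF P^{+1}$ (resp.\ $\FF P^{-1}$), and to rule out any lattice in which a copy of $X$ and a copy of $Y$ become adjacent. The available leverage — minimality (Proposition~\ref{mainO+}), self-duality, the non-adjacency Lemma~\ref{lemmaO-2}, and the comparison with $\FF P^0$ through Lemma~\ref{main1} — should suffice, but the bookkeeping is heaviest for $n$ odd, where $T(\FF P^\kappa)\subset S(\FF P^\kappa)$ makes the trivial layers interleave with the rest, and in the final step of gluing $S(\FF P^\kappa)/T(\FF P^\kappa)$ and $(U^\kappa)^\perp/U^{'\kappa}$ back together into the full lattice of $\FF P^\kappa$.
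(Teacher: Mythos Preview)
Your plan is correct and follows the paper's own argument essentially step for step: reduce to the self-dual middle section between $U^{'\kappa}$ and its orthogonal, use the map $Q_{0,\kappa}$ together with the known lattice of $\FF P^0$ from~\cite{ST} to pin down the position of $W$ (a direct summand when $n$ is even, a tower $\FF-W-\FF$ when $n$ is odd), and then invoke Lemma~\ref{lemmaO-2} to rule out $Y$ (resp.\ $X$) as a submodule of that section, leaving $\FF\oplus W\oplus(\FF-Y-\FF)$ or $(\FF-Y-\FF)\oplus(\FF-W-\FF)$ by self-duality. The only cosmetic difference is that the paper works with $U^{'\kappa\perp}/U^{'\kappa}$ (hence counts three or four trivial factors) rather than your $(U^\kappa)^\perp/U^{'\kappa}$, and it does not need any further restriction to $O_{2n-2}^+(3)$ beyond what is already packaged in Lemma~\ref{lemmaO-3}.
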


\begin{proof}
Using Proposition \ref{mainO+} and Lemma \ref{lemmaO+1}, we see that
the structure of $\FF P^\kappa$ will be determined if we know that
of $U^{'\kappa\perp}/U^{'\kappa}$. We study $U^{'+1\perp}/U^{'+1}$
first.

Consider the case $n$ even. Then, for $\kappa=\pm1$, $|P^\kappa|\neq
0$ and therefore $\FF P^\kappa=T(\FF P^\kappa)\oplus S(\FF
P^\kappa)$.

From the constituents of $\overline{\varphi^{+1}}$ and
$\overline{\psi}$, the composition factors of $U^{'+1\perp}/U^{'+1}$
are: $\FF$ (3 times), $Y$, and $W$. Since $\FF P^\kappa=T(\FF
P^\kappa)\oplus S(\FF P^\kappa)$, we know that
$U^{'+1\perp}/U^{'+1}$ has a direct summand $\FF$. Furthermore,
$X\cong U^{'+1}$ is the socle of $S(\FF P^{+1})$ by Proposition
\ref{mainO+}. Lemma \ref{main1} now implies that
$\Im(Q_{0,+1}|_{S(\FF P^0)})\cong S(\FF P^0)/\Ker(Q_{0,+1}|_{S(\FF
P^0)})$  also has socle $X$. Inspecting the structure of $\FF P^0$
given in Figure~8 of~\cite{ST}, we see that the only quotient of
$S(\FF P^0)$ having $X$ as the socle is $X-W$. This means that
$S(\FF P^{+1})$ has submodule of structure $X-W$ and therefore $W$
is a submodule of $U^{'+1\perp}/U^{'+1}$. By self-duality, $W$ must
be a direct summand of $U^{'+1\perp}/U^{'+1}$.

By Lemma \ref{lemmaO-2}, $Y$ is not a submodule of
$U^{'+1\perp}/U^{'+1}$. Combining this with the previous paragraph,
we conclude that the structure of $U^{'+1\perp}/U^{'+1}$ is
$\FF\oplus W\oplus(\FF-Y-\FF)$.

Now we consider the case $n$ odd. By Lemma \ref{main1},
$\Im(Q_{0,+1})$ is nonzero and different from $T(\FF P^{+1})$. Hence
it has the socle either $X$ or $\FF\oplus X$ by Proposition
\ref{mainO+}. Notice that $\Im(Q_{0,+1})\cong \FF
P^0/\Ker(Q_{0,+1})$. Inspecting the structure of $\FF P^0$ again, we
learn that the structure of $\Im(Q_{0,+1})$ must be $X-\FF-W-\FF$.
It follows that $U^{'+1\perp}/U^{'+1}$ has a submodule of structure
$\FF-W-\FF$. Recall that $U^{'+1\perp}/U^{'+1}$ has composition
factors: $\FF$ (4 times), $Y$, and $W$ and $Y$ is not its submodule
by Lemma \ref{lemmaO-2}. By self-duality, the structure of
$U^{'+1\perp}/U^{'+1}$ is $(\FF-Y-\FF)\oplus (\FF-W-\FF)$.

Arguing similarly for $\kappa=-1$, the structure of
$U^{'-1\perp}/U^{'-1}$ is $\FF\oplus W\oplus(\FF-X-\FF)$ when $n$
even and $(\FF-X-\FF)\oplus (\FF-W-\FF)$ when $n$ odd.
\end{proof}

Propositions~\ref{proofO-1} and \ref{proofO-2} complete the proof of
Theorem~\ref{theorem} for the type $``-"$ orthogonal groups in even
dimension.

%%% ---------------------------------------------------------------------------------------------------------------------

\section{The orthogonal groups $O_{2n+1}(3)$}\label{sectionOdd}

In this section, we assume $G=O_{2n+1}(3)$. For $\kappa=\pm1$, we
have $P^\kappa=\{\langle cg+\sum_{i=1}^n(a_ie_i+b_if_i)\rangle\mid
a_i,b_i\in\FF_3,\sum_{i=1}^n a_ib_i-c^2=\kappa\}$ and
$|P^\kappa|=3^n(3^n-\kappa)/2$. The parameters of the action of $G$
on $P^\kappa$ are:
$$a=\frac{3^{n-1}(3^{n}+\kappa)}{2}, b=(3^n+\kappa)(3^{n-1}-\kappa), r=s=\frac{3^{n-1}(3^{n-1}+\kappa)}{2}.$$
Equation~(\ref{quadratic}) now has two roots $-3^{n-1}$ and
$3^{n-1}$. Therefore, for $\kappa=\pm1$, $\FF P^\kappa$ has graph
submodules $U^{'\kappa}_{-3^{n-1}}$ and $U^{'\kappa}_{3^{n-1}}$.

\begin{lemma}\label{dimensionOdd} If $\ell=\Char(\FF)\neq 2,3$,
then $$U^{'+1}_{-3^{n-1}}\cong U^{'-1}_{3^{n-1}}, \dim
U^{'+1}_{-3^{n-1}}=\dim U^{'-1}_{3^{n-1}}=\frac{3^{2n}-1}{4},$$
$$\dim U^{'-1}_{-3^{n-1}}=\frac{(3^n-1)(3^{n}+3)}{4}, \text{ and }\dim U^{'+1}_{3^{n-1}}=\frac{(3^{n}+1)(3^n-3)}{4}.$$
\end{lemma}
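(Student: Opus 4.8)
The plan is to mimic the argument of Lemma~\ref{dimensionO+} and Lemma~\ref{dimensionO-}. The dimensions of the two graph submodules $U^{'\kappa}_{-3^{n-1}}$ and $U^{'\kappa}_{3^{n-1}}$ of $\FF P^\kappa$ are determined purely combinatorially from the rank-$3$ parameters $a,b,r,s$ listed just above: the system~(\ref{dimension}) gives $\dim U^{'\kappa}_{c_1}+\dim U^{'\kappa}_{c_2}=|P^\kappa|-1$ together with $c_2\dim U^{'\kappa}_{c_1}+c_1\dim U^{'\kappa}_{c_2}=a$, and with $\{c_1,c_2\}=\{-3^{n-1},3^{n-1}\}$ and $|P^\kappa|=3^n(3^n-\kappa)/2$, $a=3^{n-1}(3^n+\kappa)/2$ this is a routine linear computation yielding $\dim U^{'-1}_{-3^{n-1}}=(3^n-1)(3^n+3)/4$, $\dim U^{'+1}_{3^{n-1}}=(3^n+1)(3^n-3)/4$, and $\dim U^{'+1}_{-3^{n-1}}=\dim U^{'-1}_{3^{n-1}}=(3^{2n}-1)/4$. (Here one uses $\ell\neq 2,3$ so that $c_1\neq c_2$ and the system is nonsingular.)

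The remaining claim, $U^{'+1}_{-3^{n-1}}\cong U^{'-1}_{3^{n-1}}$, is the substantive point and is established exactly as the corresponding isomorphisms in the proofs of Lemmas~\ref{dimensionO+} and~\ref{dimensionO-}: by identifying both modules with a graph submodule of $\FF P^0$ via the maps $Q_{i,j}$ of~(\ref{Q}) and the minimality of Proposition~\ref{mainO+}. Concretely, the permutation module $\FF P^0$ for $O_{2n+1}(3)$ acting on singular points is rank $3$, and by the results of~\cite{L2} it has two graph submodules, one of which — call it $U'_d$ — has dimension $(3^{2n}-1)/4$ and is minimal in the sense of Proposition~\ref{mainO+} (by Theorem~2.1 of~\cite{L2}). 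Applying Lemma~\ref{main1} to $Q_{0,+1}|_{S(\FF P^0)}$ and $Q_{0,-1}|_{S(\FF P^0)}$ one sees that each of $U^{'+1}_{-3^{n-1}}$ and $U^{'-1}_{3^{n-1}}$ contains a nonzero homomorphic image of a graph submodule of $\FF P^0$; comparing dimensions and invoking Proposition~\ref{mainO+} forces that image to be all of $U'_d$ and also forces $U^{'+1}_{-3^{n-1}}\cong U'_d\cong U^{'-1}_{3^{n-1}}$. One should also double-check, using the discriminant discussion in the introduction, that the relevant graph submodule of $\FF P^0$ is the one of dimension $(3^{2n}-1)/4$ rather than the other one, matching up the parameter $d$ correctly.

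The main obstacle I anticipate is bookkeeping rather than conceptual: making sure the correct root of~(\ref{quadratic}) is paired with the correct $\kappa$ (note that for the odd-dimensional case the two roots are $\pm 3^{n-1}$, symmetric about $0$, unlike the even cases), and correctly reading off from~\cite{L2} which of the two graph submodules of $\FF P^0$ has which dimension and minimality property. Once those identifications are pinned down, the arguments of Lemmas~\ref{dimensionO+} and~\ref{dimensionO-} transfer verbatim, so I would simply write ``As in the proof of Lemma~\ref{dimensionO+}'' with a short paragraph indicating the dimension values from~(\ref{dimension}) and the identification of $U^{'+1}_{-3^{n-1}}$ and $U^{'-1}_{3^{n-1}}$ with the common graph submodule $U'_d$ of $\FF P^0$.
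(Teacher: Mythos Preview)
Your dimension computations from~(\ref{dimension}) are fine, and your overall strategy of invoking Lemma~\ref{main1} and Proposition~\ref{mainO+} is the right one. But there is a genuine error in the identification step: you assert that $\FF P^0$ for $G=O_{2n+1}(3)$ has a graph submodule $U'_d$ of dimension $(3^{2n}-1)/4$, and that both $U^{'+1}_{-3^{n-1}}$ and $U^{'-1}_{3^{n-1}}$ can be identified with it via $Q_{0,\pm 1}$. This is false. For $O_{2n+1}(3)$ the two graph submodules of $\FF P^0$ have dimensions $(3^n+1)(3^n-3)/4$ and $(3^n-1)(3^n+3)/4$ (indeed $\rho^0=1+\varphi^{+1}+\varphi^{-1}$, as used later in the section), so neither has dimension $(3^{2n}-1)/4$. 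Your own hedge at the end (``double-check\ldots that the relevant graph submodule of $\FF P^0$ is the one of dimension $(3^{2n}-1)/4$'') would in fact fail.

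The correct route is the one implicit in the paper's remark. Running the argument of Lemma~\ref{dimensionO+} with the maps $Q_{0,+1}$ and $Q_{0,-1}$ (or Lemma~\ref{main2}) identifies the \emph{other} two graph submodules, $U^{'+1}_{3^{n-1}}$ and $U^{'-1}_{-3^{n-1}}$, with the two graph submodules of $\FF P^0$. The isomorphism $U^{'+1}_{-3^{n-1}}\cong U^{'-1}_{3^{n-1}}$ that the lemma actually asserts then comes from applying Lemma~\ref{main1} and Proposition~\ref{mainO+} to $Q_{+1,-1}$ (or $Q_{-1,+1}$) directly: the common nontrivial constituent of $\rho^{+1}$ and $\rho^{-1}$ guaranteed by Lemma~\ref{main2} can only be the one of degree $(3^{2n}-1)/4$, since the remaining constituents have already been shown to have distinct degrees. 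So the fix is simply to redirect the $Q$-map argument from $P^0$ to a comparison between $P^{+1}$ and $P^{-1}$.
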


\begin{proof} This is similar to the proof of Lemma~\ref{dimensionO+}.
We remark in this case that $U^{'-1}_{-3^{n-1}}$ and
$U^{'+1}_{3^{n-1}}$ are isomorphic to graph submodules of $\FF P^0$.
\end{proof}

\begin{proposition}\label{proofOdd1}
Theorem~\ref{theorem} holds when $G=O_{2n+1}(3),n\geq3$ and
$\ell\neq 2,3$.
\end{proposition}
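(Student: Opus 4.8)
\textbf{Proof proposal for Proposition~\ref{proofOdd1}.}
The plan is to mimic the argument used for the even-dimensional cases (Propositions~\ref{proofO+1} and~\ref{proofO-1}), splitting into the three ranges of $\ell$ indicated in Table~\ref{tableOdd}: $\ell\nmid(3^n-1)$ and $\ell\nmid(3^n+1)$; $\ell\mid(3^n-1)$; and $\ell\mid(3^n+1)$. The key input is Lemma~\ref{dimensionOdd}, which identifies the four graph submodules of $\FF P^{+1}$ and $\FF P^{-1}$, gives their dimensions, and records the crucial isomorphism $U^{'+1}_{-3^{n-1}}\cong U^{'-1}_{3^{n-1}}$ (this is the module called $Z$ in the table). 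Since $\ell\neq 2,3$, the two roots $\pm 3^{n-1}$ of~(\ref{quadratic}) are distinct, so by~(\ref{direct sum}) we have $U^{'\kappa}_{-3^{n-1}}\oplus U^{'\kappa}_{3^{n-1}}=S(\FF P^\kappa)$ for each $\kappa=\pm1$, and by Proposition~\ref{mainO+} each graph submodule is either simple or has $T(\FF P^\kappa)$ as its unique minimal submodule.

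First I would treat the generic case $\ell\nmid(3^n-1),\ell\nmid(3^n+1)$. Here $|P^{+1}|=3^n(3^n-1)/2$ and $|P^{-1}|=3^n(3^n+1)/2$ are both nonzero in $\FF$, so $\FF P^\kappa=T(\FF P^\kappa)\oplus S(\FF P^\kappa)$, and moreover $T(\FF P^\kappa)$ is not contained in either graph submodule (each graph submodule is then simple). Thus
$$\FF P\cong\FF P^{+1}\oplus\FF P^{-1}\cong 2\FF\oplus U^{'+1}_{3^{n-1}}\oplus U^{'-1}_{-3^{n-1}}\oplus\bigl(U^{'+1}_{-3^{n-1}}\oplus U^{'-1}_{3^{n-1}}\bigr),$$
which is $2\FF\oplus X\oplus Y\oplus 2Z$ with $X:=U^{'+1}_{3^{n-1}}$, $Y:=U^{'-1}_{-3^{n-1}}$, $Z:=U^{'+1}_{-3^{n-1}}\cong U^{'-1}_{3^{n-1}}$, matching the dimensions $\dim X=(3^n+1)(3^n-3)/4$, $\dim Y=(3^n-1)(3^n+3)/4$, $\dim Z=(3^{2n}-1)/4$ with the $\delta$-corrections vanishing in this range.

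Next, the case $\ell\mid(3^n-1)$ (the case $\ell\mid(3^n+1)$ is symmetric, swapping the roles of $+1$ and $-1$). Now $\ell\nmid(3^n+1)$ still, so $\FF P^{-1}=T(\FF P^{-1})\oplus S(\FF P^{-1})=2\FF$-free and splits as before into $\FF\oplus U^{'-1}_{3^{n-1}}\oplus U^{'-1}_{-3^{n-1}}$, but now on the $P^{+1}$ side $|P^{+1}|\equiv 0$, so $T(\FF P^{+1})\subset S(\FF P^{+1})$. One checks using Lemma~\ref{liebeck} (the roots $c_1,c_2=\mp 3^{n-1}$ satisfy $T(\FF P^{+1})\subseteq U^{'+1}_{c_i}$ iff $c_i=a$ in $\FF$, equivalently $c_i\equiv 3^{n-1}(3^n+1)/2\pmod\ell$) to decide which graph submodule of $\FF P^{+1}$ contains $T(\FF P^{+1})$: since $\ell\mid 3^n-1$ one gets $T(\FF P^{+1})\subset U^{'+1}_{-3^{n-1}}$ (the large one, of dimension $(3^{2n}-1)/4$), and $T(\FF P^{+1})\not\subset U^{'+1}_{3^{n-1}}$. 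Hence $\FF P^{+1}=U^{'+1}_{3^{n-1}}\oplus U^{+1}_{-3^{n-1}}$ with $U^{'+1}_{3^{n-1}}$ simple and $U^{+1}_{-3^{n-1}}$ uniserial with composition series $0\subset T(\FF P^{+1})\subset U^{'+1}_{-3^{n-1}}\subset U^{+1}_{-3^{n-1}}$ (the outer trivial factor coming from $\FF P^{+1}/S(\FF P^{+1})$), the relevant dimensions dropping by $\delta_{\ell,3^n-1}=1$ exactly in the $Z$-factor as recorded. Assembling, $\FF P\cong(\FF-X-\FF)\oplus X'\oplus Y\oplus 2Z$ in the notation of Table~\ref{tableOdd}, where the trivial-capped piece is the uniserial $U^{+1}_{-3^{n-1}}$ and $X'$ (the table's $X$) is $U^{'+1}_{3^{n-1}}$, $Y=U^{'-1}_{-3^{n-1}}$, $Z=U^{'-1}_{3^{n-1}}$. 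I expect the only mild obstacle to be bookkeeping: correctly pairing the self-dual uniserial $\FF$-$Z$-$\FF$ module with the right $\kappa$ and verifying via Lemma~\ref{liebeck} and~(\ref{dimension}) that it is the $+1$ side for $\ell\mid(3^n-1)$ and the $-1$ side for $\ell\mid(3^n+1)$, together with tracking the $\delta$-corrections to the dimensions; everything else is a direct transcription of the even-dimensional arguments.
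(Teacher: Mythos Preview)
Your overall strategy is exactly the paper's: split into the three ranges of $\ell$, use the direct-sum decomposition~(\ref{direct sum}) of $S(\FF P^\kappa)$ into graph submodules, and in the non-generic cases locate $T(\FF P^\kappa)$ inside the correct graph submodule to produce the uniserial piece. The generic case is fine.

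However, in the case $\ell\mid(3^n-1)$ you have placed $T(\FF P^{+1})$ in the \emph{wrong} graph submodule, and this is not just bookkeeping---it gives a decomposition that does not match Table~\ref{tableOdd}. Your own criterion actually gives the right answer if applied correctly: since here $r=s$, the two roots satisfy $c+c'=0$, so $U^{'+1}_c$ is the $T$-eigenspace for eigenvalue $c$, and $[P^{+1}]$ has $T$-eigenvalue $a=3^{n-1}(3^n+1)/2$. When $\ell\mid(3^n-1)$ one has $3^n\equiv 1$, hence $a\equiv 3^{n-1}$, so $T(\FF P^{+1})\subset U^{'+1}_{3^{n-1}}$, the graph submodule of dimension $(3^n+1)(3^n-3)/4$, \emph{not} $U^{'+1}_{-3^{n-1}}$. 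Consequently it is $U^{'+1}_{-3^{n-1}}\cong Z$ that stays simple of dimension $(3^{2n}-1)/4$ (note Table~\ref{tableOdd} records \emph{no} $\delta$-correction on $\dim Z$), while the uniserial summand is $U^{+1}_{3^{n-1}}$ with series $0\subset T(\FF P^{+1})\subset U^{'+1}_{3^{n-1}}\subset U^{+1}_{3^{n-1}}$; setting $X:=U^{'+1}_{3^{n-1}}/T(\FF P^{+1})$ (dimension $(3^n+1)(3^n-3)/4-1$, which \emph{is} the $\delta_{\ell,3^n-1}$ correction recorded for $\dim X$) gives
\[
\FF P\cong \FF\oplus(\FF-X-\FF)\oplus Y\oplus 2Z,
\]
as in the table. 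The symmetric correction applies to the case $\ell\mid(3^n+1)$: there $T(\FF P^{-1})\subset U^{'-1}_{-3^{n-1}}$ (the $Y$-sized graph submodule), and it is $\dim Y$, not $\dim Z$, that drops by one. Your final display and the sentence about ``the $\delta$-corrections vanishing exactly in the $Z$-factor'' should be revised accordingly; once this is fixed the argument is complete and identical to the paper's.
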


\begin{proof}
\textbf{Case 1}: $\ell\nmid (3^n-1), \ell\nmid (3^n+1)$. In this
case, $\FF P\cong2\FF\oplus X\oplus Y\oplus 2Z,$ where
$X:=U^{'+1}_{3^{n-1}}$, $Y:=U^{'-1}_{-3^{n-1}}$, and
$Z:=U^{'+1}_{-3^{n-1}}\cong U^{'-1}_{3^{n-1}}$. By
Proposition~\ref{mainO+}, $X, Y$, and $Z$ are simple.

\medskip

\textbf{Case 2}: $\ell\mid (3^n-1)$. We have $$\FF P^{-1}=T(\FF
P^{-1})\oplus U^{'-1}_{-3^{n-1}}\oplus U^{'-1}_{3^{n-1}}\cong
\FF\oplus Y\oplus Z,$$ where $Y:=U^{'-1}_{-3^{n-1}}$,
$Z:=U^{'1}_{-3^{n-1}}\cong U^{'-1}_{3^{n-1}}$ and
$$\FF P^{+1}=U^{+1}_{3^{n-1}}\oplus U^{'+1}_{-3^{n-1}}\cong U^{+1}_{3^{n-1}}\oplus Z,$$
where $U^{+1}_{3^{n-1}}$ is uniserial with composition series
$0\subset T(\FF P^{+1}) \subset U^{'+1}_{3^{n-1}}\subset
U^{+1}_{3^{n-1}}.$ Setting $X:=U^{'+1}_{3^{n-1}}/T(\FF P^{+1})$, we
get
$$\FF P\cong \FF\oplus (\FF-X-\FF)\oplus Y\oplus 2Z.$$

\medskip

\textbf{Case 3}: $\ell\mid (3^n+1)$. As in Case~2,
$$\FF P\cong \FF\oplus X\oplus (\FF-Y-\FF)\oplus 2Z,$$ where
$X:=U^{'+1}_{3^{n-1}}$, $Y:=U^{'-1}_{-3^{n-1}}/T(\FF P^{-1})$, and
$Z:=U^{'+1}_{-3^{n-1}}\cong U^{'-1}_{3^{n-1}}$.
\end{proof}

Now we consider the case $\ell=2$. Following
Lemma~\ref{dimensionOdd}, we assume that
$\rho^\kappa=1+\varphi^\kappa+\psi$ for $\kappa=\pm1$, where
$\varphi^\kappa,\psi\in\Irr(G)$,
$\varphi^\kappa(1)=(3^n+\kappa)(3^n-\kappa)/4$, and
$\psi(1)=(3^{2n}-1)/4$. Then $\rho^0=1+\varphi^{+1}+\varphi^{-1}$.
From Corollary~7.5 of~\cite{ST}, we have
$\overline{\varphi^{+1}}=\chi+\beta(X_1)$ and
$\overline{\varphi^{-1}}=1+\chi+\beta(Y_1)$, where $\chi$ is a
$2$-Brauer character of $G$ and $X_1,Y_1$ are simple $G$-modules of
dimensions $(3^n-1)(3^n-3)/8$, $(3^n+1)(3^n+3)/8-1$, respectively.
Furthermore, $\chi=\beta(Z_1)$ if $n$ is odd and $\chi=1+\beta(Z_1)$
if $n$ is even, where $Z_1$ is a simple module of dimension
$(3^{2n}-9)/8-\delta_{2,n}$. The following lemma gives the
decomposition of $\overline{\psi}$ into irreducible $2$-Brauer
characters of $G$.

\begin{lemma}\label{lemmaOdd1} With the above notation, $U^{'1}\cong
X_1$ and $U^{'-1}\cong Y_1$. Consequently,
$\overline{\psi}=\beta(X_1)+\beta(Y_1)$.
\end{lemma}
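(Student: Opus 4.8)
The plan is to show that the unique graph submodule $U^{'\kappa}$ of $\FF P^\kappa$ (which exists and is simple by Proposition~\ref{mainO+} and Lemma~\ref{lemmaO+1} since $c_1=c_2=\pm 3^{n-1}$ coincide when $\ell=2$) is isomorphic to the known constituent $X_1$ for $\kappa=+1$ and to $Y_1$ for $\kappa=-1$; the final sentence about $\overline\psi$ then follows immediately by dimension-count, since $\rho^\kappa=1+\varphi^\kappa+\psi$ reduces mod $2$ to give $\overline\psi = \overline{\rho^\kappa} - 1 - \overline{\varphi^\kappa}$, and Lemma~\ref{lemmaO+1}(ii) forces $U^{'\kappa}$ to occur at least twice in $\FF P^\kappa$, which together with the composition factors of $\overline{\varphi^\kappa}$ listed above will pin down $\overline\psi = \beta(X_1)+\beta(Y_1)$.

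First I would identify the composition factors of $\FF P^\kappa$. Since $|P^\kappa| = 3^n(3^n-\kappa)/2$ is (for the relevant parities) divisible or not by $2$ in a controlled way, I would split off $T(\FF P^\kappa)$ where possible and work inside $S(\FF P^\kappa)$, whose Brauer character is $\overline{\varphi^\kappa} + \overline\psi$ plus copies of $1$. By Lemma~\ref{lemmaO+1}, $U^{'\kappa}$ is simple, self-dual, and appears at least twice as a composition factor. On the other hand, $\overline{\varphi^\kappa}$ is given explicitly: $\overline{\varphi^{+1}} = \chi + \beta(X_1)$ and $\overline{\varphi^{-1}} = 1 + \chi + \beta(Y_1)$ with $\chi = \beta(Z_1)$ or $1 + \beta(Z_1)$. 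The key point is that $U^{'\kappa}$ must be a constituent of $\overline{\varphi^\kappa}$ — this is where I would invoke the relations machinery of \S\ref{relations}: the graph submodule sits inside the image of a $Q_{i,j}$-type map or, more directly, $U^{'\kappa}$ is a minimal nontrivial submodule of $\FF P^\kappa$ and its Brauer character, being $\varphi^\kappa$-related, cannot be $Z_1$ (too large) nor trivial, so $U^{'\kappa} \cong X_1$ when $\kappa=+1$ and $U^{'\kappa}\cong Y_1$ when $\kappa=-1$. To rule out $U^{'\kappa}\cong Z_1$ I would compare dimensions: $\dim Z_1 = (3^{2n}-9)/8 - \delta_{2,n}$ is far larger than $\dim X_1 = (3^n-1)(3^n-3)/8$, yet $U^{'\kappa}$ has the dimension of a graph submodule which, by the $\ell=2$ coincidence of roots and equations~(\ref{dimension}), matches $\dim X_1$ resp.\ $\dim Y_1$; I would verify this arithmetic directly.

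Having shown $U^{'+1}\cong X_1$ and $U^{'-1}\cong Y_1$, the consequence for $\overline\psi$ is then a short bookkeeping step. Since $\overline{\rho^\kappa}$ has the same constituents as the full reduction of $\CC P^\kappa$, and $\FF P^\kappa$ has composition factors consisting of $U^{'\kappa}$ (twice), the factors of $\overline{\varphi^\kappa}/U^{'\kappa}$, and the appropriate trivials, cancellation with $\overline{\varphi^\kappa} = \chi + \beta(U^{'\kappa})$ (up to a trivial for $\kappa=-1$) leaves $\overline\psi = \beta(X_1) + \beta(Y_1)$ exactly, independently of the parity of $n$ (the $Z_1$ and trivial pieces inside $\chi$ all get absorbed into $\varphi^\kappa$, not $\psi$). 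The main obstacle I anticipate is the step ruling out that $U^{'\kappa}$ could coincide with $Z_1$ or split off extra trivials — i.e.\ nailing down precisely that the graph submodule's Brauer character equals $\beta(X_1)$ (not $\beta(X_1)+1$ or $\beta(Z_1)$). This requires both the exact dimension formula from~(\ref{dimension}) and the simplicity/minimality from Proposition~\ref{mainO+}, plus a careful reading of which irreducibles of the right dimension actually appear in $\overline{\varphi^\kappa}$; I would lean on Corollary~7.5 of~\cite{ST} for the latter and on the rank-$3$ parameter computation $r=s$ (hence $c_1 = -c_2 = 3^{n-1}$, forced equal mod $2$) for the former.
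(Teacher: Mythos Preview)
Your proposal has a genuine gap at the crucial step: you plan to compute $\dim U^{'\kappa}$ directly from equations~(\ref{dimension}), but those equations are derived from the eigenspace decomposition $S(\FF P^\kappa)=U'_{c_1}\oplus U'_{c_2}$ in~(\ref{direct sum}), which holds only when $c_1\neq c_2$. In characteristic $2$ the two roots $\pm 3^{n-1}$ coincide, the graph submodules are equal, and the system~(\ref{dimension}) degenerates---it gives no information whatsoever about $\dim U^{'\kappa}$. So your ``I would verify this arithmetic directly'' step fails, and with it your mechanism for ruling out $U^{'\kappa}\cong Z_1$. You also never address why $U^{'+1}\not\cong Y_1$ (both $X_1$ and $Y_1$ lie in the candidate set you implicitly produce), and your assertion that ``$U^{'\kappa}$ must be a constituent of $\overline{\varphi^\kappa}$'' is unjustified and essentially assumes the conclusion.

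The paper's proof is accordingly far more indirect. It first uses the map $Q_{0,+1}$ and the known composition factors of $\FF P^0$ from~\cite{ST} to narrow $U^{'+1}$ to one of $\{X_1,Y_1,Z_1\}$. To single out $X_1$, it then restricts $U^{+1}$ to the subgroup $G_1=O_{2n}^+(3)$, decomposes $\FF P^{+1}|_{G_1}\cong \FF P_1^{+1}\oplus 2\FF P_1^{-1}$, and shows that $U^{+1}|_{G_1}$ splits as a direct sum of three nontrivial pieces each containing a graph submodule of $\FF P_1^{\pm 1}$. Combining the results of \S\ref{sectionO+} on those modules with Hoffman's lower bound on nonlinear $2$-Brauer degrees of $G_1$ yields the explicit upper bound $\dim U^{+1}\leq 3(3^n-1)(3^{n-1}-1)/8+6$, and only $X_1$ survives. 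Your proposal contains no substitute for this restriction argument.
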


\begin{proof} By Proposition~\ref{mainO+} and Lemma \ref{main1}, the submodule
$\Im(Q_{0,+1})$ of $\FF P^{+1}$ has $U^{'+1}$ as a composition
factor. It follows that $U^{'+1}\in\{X_1, Y_1, Z_1\}$ since
$\Im(Q_{0,+1})\cong \FF P^0/\Ker(Q_{0,+1})$ and the composition
factors of $\FF P^0$ are: $\FF$ (twice or four times), $X_1$, $Y_1$,
and $Z_1$ (twice) (see Figure~6 of~\cite{ST}).

Set $G_1:=O^+_{2n}(3)\leq G$. Let $P^{+1}_1$ and $P^{-1}_1$ be the
sets of plus points and minus points in $\langle
e_1,...,e_n,f_1,...,f_n\rangle$. Since $P^{+1}=P^{+1}_1\cup
\{\langle v+g\rangle\mid \langle v\rangle\in P^{-1}_1\}\cup
\{\langle v-g\rangle\mid \langle v\rangle\in P^{-1}_1\}$, we have
$\FF P^{+1}|_{G_1}\cong \FF P_1^{+1}\oplus 2\FF P^{-1}_1$. Moreover,
if $P^{-1}_1+g:=\{\langle v+g\rangle\mid \langle v\rangle\in
P^{-1}_1\}$ and $P^{-1}_1-g:=\{\langle v-g\rangle\mid \langle
v\rangle\in P^{-1}_1\}$ then we get an $\FF G_1$-isomorphism:
$$U^{+1}|_{G_1}\cong \langle v_{1,\alpha}\mid \alpha\in P^{+1}_1\rangle\oplus \langle
v_{1,\alpha}\mid \alpha\in P^{-1}_1+g\rangle \oplus\langle
v_{1,\alpha}\mid \alpha\in P^{-1}_1-g\rangle,$$ where all summands
are clearly nonzero and nontrivial $\FF G_1$-modules. These summands
are submodules of $\FF P_1^{+1}\oplus 2\FF P^{-1}_1$. It follows
that, by Proposition~\ref{mainO+}, each of them contains a graph
submodule of $\FF P_1^{+1}$ or $\FF P_1^{-1}$. Notice that the
dimensions of the graph submodules of $\FF P_1^1$ as well as $\FF
P_1^{-1}$ are $(3^n-1)(3^{n-1}-1)/8$.

We have shown that $U^{+1}|_{G_1}$ has 3 composition factors
(counting multiplicities) of degree $(3^n-1)(3^{n-1}-1)/8$. If
$U^{+1}|_{G_1}$ has another nonlinear composition factor, $\dim
U^{+1}$ would be at least $4(3^n-1)(3^{n-1}-1)/8+1$ since the
smallest degree of nonlinear irreducible $2$-Brauer character of
$G_1$ is $(3^n-1)(3^{n-1}-1)/8$ (see Table~1 of~\cite{Ho}). This
contradicts the fact that $U^{'+1}\in\{X_1, Y_1, Z_1\}$, whence
$U^{+1}|_{G_1}$ has exactly 3 nonlinear composition factors, all of
degree $(3^n-1)(3^{n-1}-1)/8$.

Recall that $U^{+1}|_{G_1}$ is a submodule of $\FF P_1^{+1}\oplus
2\FF P^{-1}_1$. It follows that $U^{+1}_{G_1}$ has at most 6
composition factors of dimension 1 (see Table~\ref{tableO+}).
Combining this with the conclusion of the previous paragraph, we
obtain $\dim U^{+1}\leq 3(3^n-1)(3^{n-1}-1)/8+6$. This forces $\dim
U^{+1}=\dim X_1+1$ and therefore $U^{'+1}\cong X_1$ again by
$U^{'+1}\in\{X_1, Y_1, Z_1\}$.

The arguments for $U^{'-1}\cong Y_1$ are similar. Since
$U^{'\kappa}$ appears at least twice as a composition factor of $\FF
P^\kappa$ (see Lemma~\ref{lemmaO+1}), both $U^{'+1}$ and $U^{'-1}$
are constituents of $\overline{\psi}$. Therefore
$\overline{\psi}=\beta(X_1)+\beta(Y_1)$ by comparing degrees.
\end{proof}

\begin{proposition}\label{proofOdd2}
Theorem~\ref{theorem} holds when $G=O_{2n+1}(3), n\geq3$ and
$\ell=2$.
\end{proposition}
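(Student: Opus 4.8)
The argument parallels the proofs of Propositions~\ref{proofO+2} and~\ref{proofO-2}; I outline the plan. By Lemma~\ref{lemmaOdd1} we have $U^{'+1}\cong X_1$ and $U^{'-1}\cong Y_1$, and by Lemma~\ref{lemmaO+1} each $U^{'\kappa}$ ($\kappa=\pm1$) is simple, self-dual, equal to the socle of $S(\FF P^\kappa)$, and occurs at least twice as a composition factor of $\FF P^\kappa$. As in the even-dimensional cases, Proposition~\ref{mainO+} makes $U^{'\kappa\perp}\cap S(\FF P^\kappa)$ the unique maximal submodule of $S(\FF P^\kappa)$ with simple quotient $U^{'\kappa}$, so the whole submodule lattice of $\FF P^\kappa$ is recovered from that of the self-dual module $U^{'\kappa\perp}/U^{'\kappa}$, whose composition factors are read off from $\overline{\rho^\kappa}=1+\overline{\varphi^\kappa}+\overline{\psi}$ together with Lemma~\ref{lemmaOdd1}. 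A short computation of $|P^\kappa|$ modulo $2$ shows that $|P^{+1}|$ is even exactly when $n$ is even and $|P^{-1}|$ is even exactly when $n$ is odd, so $\FF P^{+1}=T(\FF P^{+1})\oplus S(\FF P^{+1})$ for $n$ odd and $\FF P^{-1}=T(\FF P^{-1})\oplus S(\FF P^{-1})$ for $n$ even; these are the ``split'' cases, the other two being ``indecomposable''.

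In the split cases, self-duality of $S(\FF P^\kappa)$ and the minimality of Proposition~\ref{mainO+} already pin the socle series of $S(\FF P^\kappa)$: the layer above the socle $U^{'\kappa}$ is a self-dual module on the two remaining nontrivial factors, which have distinct dimensions and hence split, forcing Loewy length three and the shapes $X_1-(Z_1\oplus Y_1)-X_1$ (for $\kappa=+1$) and $Y_1-(Z_1\oplus X_1)-Y_1$ (for $\kappa=-1$); restoring $T(\FF P^\kappa)$ gives the corresponding entries of Table~\ref{tableOdd}. In the indecomposable cases $T(\FF P^\kappa)\subset S(\FF P^\kappa)$ and $U^{'\kappa\perp}/U^{'\kappa}$ is a self-dual module with composition factors $\FF$ (twice), $Z_1$, and whichever of $X_1,Y_1$ is not $U^{'\kappa}$; its structure is extracted exactly as in Proposition~\ref{proofO-2}. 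By Lemma~\ref{main1}, $\Im(Q_{0,\kappa})$ and $\Im(Q_{0,\kappa}|_{S(\FF P^0)})$, the images of the maps in~(\ref{Q}), are nonzero, are not $T(\FF P^\kappa)$, and are quotients of $\FF P^0$ and of $S(\FF P^0)$ respectively; by Proposition~\ref{mainO+} their socles contain $U^{'\kappa}$, and matching against the known submodule lattice of $\FF P^0$ in Figure~6 of~\cite{ST} singles out a submodule of $S(\FF P^\kappa)$ of prescribed shape through $U^{'\kappa}$, which fixes how $Z_1$ sits relative to the trivial factors (for one value of $\kappa$ it ends up sandwiched in a block $\FF-Z_1-\FF$, for the other it splits off). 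That the remaining nontrivial factor splits off rather than appearing as a nonsplit constituent follows, as in Proposition~\ref{proofO-2}, from the analogue of Lemma~\ref{lemmaO-2} --- $\FF P_n^{+1}$ has no submodule of shape $X_1-Y_1$ and $\FF P_n^{-1}$ none of shape $Y_1-X_1$ --- proved in the same way by passing to the fixed points of the unipotent radical $O=O_3(Q)$ of the parabolic $Q=O.L$ with Levi $L\cong O_{2n-1}(3)\times\ZZ_2$, establishing $C_{\FF P_n^{\pm1}}(O)\cong\FF P_{n-1}^{\pm1}\oplus\FF$ and $C_{X_n}(O)\cong X_{n-1}$, $C_{Y_n}(O)\cong Y_{n-1}$, and inducting downward with the smallest case handled directly against the character tables of the relevant orthogonal group; alternatively, each candidate structure can be cross-checked by restricting $\FF P^\kappa$ to $O^\pm_{2n}(3)$ and invoking the already-established Tables~\ref{tableO+} and~\ref{tableO-}. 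Assembling $\FF P\cong\FF P^{+1}\oplus\FF P^{-1}$ then yields Table~\ref{tableOdd}.

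The main obstacle is the indecomposable case: one must pick out exactly the right piece of the rather intricate submodule lattice of $\FF P^0$ through the homomorphisms $Q_{0,\kappa}$, and do the bookkeeping that eliminates every competing self-dual configuration for $U^{'\kappa\perp}/U^{'\kappa}$ --- keeping the two trivial factors from merging, placing $Z_1$ correctly (sandwiched in one case, split in the other), and splitting off $X_1$ or $Y_1$ --- all while remaining consistent with the structures of $\FF P^{\pm1}$ and $\FF P^0$ for the smaller orthogonal groups already determined in the earlier sections.
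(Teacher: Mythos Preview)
Your outline has two genuine gaps.

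First, your composition-factor count for the ``split'' case $\kappa=-1$, $n$ even is wrong. Here $\overline{\varphi^{-1}}=1+\chi+\beta(Y_1)$ with $\chi=1+\beta(Z_1)$, so $\overline{\rho^{-1}}$ contains \emph{three} trivial constituents, not one. Thus $S(\FF P^{-1})$ has composition factors $Y_1$ (twice), $X_1$, $Z_1$, and $\FF$ (twice), and the middle layer $U^{'-1\perp}\cap S(\FF P^{-1})/U^{'-1}$ is not merely $Z_1\oplus X_1$ but has the two extra trivials; its actual structure is $X_1\oplus(\FF-Z_1-\FF)$. The paper extracts this not by the simple self-duality argument you sketch, but by applying $Q_{-1,0}$ and reading off from the lattice of $\FF P^0$ that $S(\FF P^{-1})$ has a uniserial submodule $Y_1-\FF-Z_1-\FF$. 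So this case is not the easy one.

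Second, and more seriously, your proposed analogue of Lemma~\ref{lemmaO-2} is false here. For $n$ even, Table~\ref{tableOdd} shows that $\FF P^{+1}$ \emph{does} contain a uniserial submodule of shape $X_1-Y_1$ (indeed $Y_1$ sits in the second socle layer, connected only to $X_1$ below). In the $O^-_{2n}(3)$ proof, Lemma~\ref{lemmaO-2} is used to show that $Y$ is \emph{not} in the socle of $U^{'+1\perp}/U^{'+1}$, forcing it into a block $\FF-Y-\FF$; the situation for $O_{2n+1}(3)$ is exactly the opposite. The paper establishes directly, via $\Im(Q_{0,+1})$ and the known lattice of $\FF P^0$ from~\cite{ST}, that $Y_1$ \emph{is} a submodule of $U^{'+1\perp}/U^{'+1}$ while $Z_1$ is not, whence self-duality gives $Y_1\oplus(\FF-Z_1-\FF)$. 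Similarly for $n$ odd, $\kappa=-1$, the map $Q_{0,-1}$ produces a submodule $Y_1-(\FF\oplus Z_1)$ of $\FF P^{-1}$, which together with $T(\FF P^{-1})\subset U^{'-1\perp}$ forces $U^{'-1\perp}/U^{'-1}$ to be semisimple. No inductive ``no $X_1-Y_1$'' lemma is needed, and the one you state would not hold.
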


\begin{proof}
\textbf{Case 4}: $n$ even. We know from Lemma~\ref{lemmaOdd1} that
$U^{'+1}\cong X_1$. The self duality of~$U^{'+1}$ from
Lemma~\ref{lemmaO+1} then implies that $U^{'+1\perp}/U^{'+1}$ has
composition factors: $\FF$ (twice), $Z_1$, and $Y_1$. Using
Lemma~\ref{main1} and inspecting the structure of $\FF P^0$ (see
Figure~6 of~\cite{ST}), we see that $Y_1$ must be a submodule of
$U^{'+1\perp}/U^{'+1}$ but $Z_1$ is not. Therefore, the structure of
$U^{'+1\perp}/U^{'+1}$ is $Y_1\oplus (\FF-Z_1-\FF)$ and hence that
of $\FF P^1$ is determined.

Now we determine the structure of $\FF P^{-1}$. Since
$[P^{-1}]\notin S(\FF P^{-1})$, $\FF P^{-1}=T(\FF P^{-1})\oplus
S(\FF P^{-1})$. Also, $U^{'-1}\cong Y_1$ is the socle of $S(\FF
P^{-1})$. Since $S(\FF P^{-1})$ is self-dual, its head is also
(isomorphic to) $Y_1$. Hence $S(\FF P^{-1})/\Ker(Q_{-1,0}|_{S(\FF
P^{-1})})\cong\Im(Q_{-1,0}|_{S(\FF P^{-1})})$ has $Y_1$ as head.
From the submodule structure of $\FF P^0$, we find that
$\Im(Q_{-1,0}|_{S(\FF P^{-1})})$ is uniserial with socle series
$\FF-Z_1-\FF-Y_1$. So $S(\FF P^{-1})$ has a quotient
$\FF-Z_1-\FF-Y_1$. Again by its self-duality, it has a submodule
$Y_1-\FF -Z_1-\FF$, which implies that $U^{'-1\perp}/U^{'-1}$ has a
submodule $\FF -Z_1-\FF$. Notice that $U^{'-1\perp}/U^{'-1}$ is
self-dual and has composition factors: $\FF$ (twice), $X_1$, and
$Z_1$. Its structure must be $\FF \oplus X_1\oplus (\FF -Z_1-\FF)$,
as described in Table~\ref{tableOdd}.

\textbf{Case 5}: $n$ odd. First we find the structure of $\FF
P^{+1}$. Composition factors of $U^{'+1\perp}/U^{'+1}$ are $\FF$,
$Y_1$, and $Z_1$. Therefore, the structure of $U^{'+1\perp}/U^{'+1}$
is simply $\FF\oplus Y_1\oplus Z_1$ by its self-duality.

Now we turn to $\FF P^{-1}$. By Proposition~\ref{mainO+}, the socle
of $\Im(Q_{0,-1})$ is either $Y_1$ ($\cong U^{'-1}$) or $\FF\oplus
Y_1$ ($\cong U^{-1}$). Notice that $\Im(Q_{0,-1})\cong \FF P^0/\Ker
Q_{0,-1}$ and $\FF P^0$ has only one quotient having such socle,
which is $Y_1-(\FF\oplus Z_1)$ (see the structure of $\FF P^0$ in
Figure~6 of~\cite{ST}). We deduce that $\FF P^{-1}$ has a submodule
of structure $Y_1-(\FF\oplus Z_1)$. We temporarily set $\FF_1:=T(\FF
P^{-1})$ and $\FF_2:=\FF P^{-1}/S(\FF P^{-1})$. Then the submodule
of $\FF P^{-1}$ of structure $Y_1-(\FF\oplus Z_1)$ must be
$Y_1-(\FF_2\oplus Z_1)$. It follows that $U^{'-1\perp}/U^{'-1}$ has
a submodule $\FF_2\oplus Z_1$. Recall that $U^{'-1\perp}/U^{'-1}$ is
self-dual and has composition factors: $\FF_1$, $\FF_2$, $Z_1$, and
$X_1$, we conclude that its structure is $\FF_1\oplus\FF_2\oplus
Z_1\oplus X_1$.
\end{proof}

Propositions~\ref{proofOdd1} and \ref{proofOdd2} complete the proof
of Theorem~\ref{theorem} for the orthogonal groups in odd dimension.

\medskip

\textbf{Acknowledgement}: The authors are grateful to Ulrich
Meierfrankenfeld for his helpful suggestions leading to the proof of
Lemma \ref{lemmaO-2}. Part of this work was done while the second
author was a postdoctoral fellow in the Department of Mathematics at
Michigan State University. It is a pleasure to thank the department
for the supportive and hospitable work environment.

%%% -------------------------------------------------------------------------------------------------------

\end{document}